\def\R{{\mathbb R}}
\def\Rn{{{\R}^N}}
\def\Haus{{{\mathcal H}}}
\def\Hnmu{{\Haus^{N-1}}}
\def\vrho{\varrho}
\def\diver{\mathop{\rm div}\nolimits}
\newcommand\scal[2]{{\left\langle #1 ,#2\right\rangle}}
\newtheorem{theo}{Theorem}[section]
\newtheorem{prop}[theo]{Proposition}
\newtheorem{cor}[theo]{Corollary}
\newtheorem{lemma}[theo]{Lemma}
\newtheorem{conjecture}[theo]{Conjecture}
\newcommand{\qed}{\thinspace\null\nobreak\hfill\hbox{\vbox{\kern-.2pt\hrule
height.2pt depth.2pt\kern-.2pt\kern-.2pt \hbox to2.5mm{\kern-.2pt\vrule
width.4pt \kern-.2pt\raise2.5mm\vbox to.2pt{}\lower0pt\vtop
to.2pt{}\hfil\kern-.2pt \vrule
width.4pt\kern-.2pt}\kern-.2pt\kern-.2pt\hrule height.2pt depth.2pt
\kern-.2pt}}\par\medbreak}
\newtheorem{ex1}[theo]{Example}
\newenvironment{ex}{\begin{ex1} \rm }{\end{ex1} }
\newtheorem{rmk1}[theo]{Remark}
\newenvironment{rmk}{\begin{rmk1} \rm }{\end{rmk1} }
\newcommand{\la}{\lambda}
\newcommand{\al}{\alpha}
\newcommand{\be}{\beta}
\newcommand{\g}{\gamma}
\newcommand{\ep}{\epsilon}
\newcommand{\ci}{{\cal C}}
\newcommand{\de}{\delta}
\newenvironment{proof}{\noindent{\sc Proof.}}{\qed}
\newcommand{\f}{{\phi}}
\newcommand{\rr}{\mathbb{R}}
\newcommand{\p}{\psi}
\newtheorem{que1}{Question }
\newenvironment{que}{\begin{que1} \rm }{\end{que1} }
\newcommand{\arc}{{\rm arc}}
\title{Some isoperimetric problems in planes with density}
\author{
Antonio Ca\~{n}ete\footnote{
Departamento de Matem\'atica Aplicada I, Universidad de Sevilla,
Campus de Reina Mercedes, E--24071, Sevilla, Espa\~na; e-mail: antonioc@us.es},
Michele Miranda Jr\footnote{
Dipartimento di Matematica, Universit\`a degli Studi di
  Ferrara, via Machiavelli 35, 44100 Ferrara, Italy; e--mail:
  michele.miranda@unife.it},
and Davide Vittone\footnote{
Dipartimento di Matematica Pura ed Applicata, Universit\`a
  degli Studi di Padova, via Trieste 63, 35100 Padova (PD), Italy;
  e--mail: vittone@math.unipd.it}}
\begin{document}

\maketitle

\begin{abstract}
We study the isoperimetric problem in Euclidean space endowed with a density.
We first consider piecewise constant densities and examine particular
cases related to the characteristic functions of half-planes, strips and balls.
We also consider continuous modification of Gauss density in
$\R^2$. Finally, we give a list of related open questions.
\end{abstract}

\section{Introduction}
The classical isoperimetric problem in Euclidean space $\R^N$ looks for 
``least perimeter'' sets 
among those ones with prescribed volume. A first solution of this problem was provided 
in 1935 by Lusternik, by proving, using symmetrisation techniques, that balls minimise 
Minkowski content 
(see also Burago-Zalgaller \cite[Chapters 8 and 10]{BurZal88Geo}); the complete solution 
was given by De Giorgi \cite{DeG58Sul},
with the proof that balls are minimisers in the class of finite perimeter sets. 
The isoperimetric problem has been extended to Riemannian manifolds and, more recently, 
to the so-called {\em manifolds with density}. 

A manifold with density (see also \cite[Chapter 18]{Mor08Geo}) is a manifold $M$ endowed 
with a positive function $f:M\to\R^+$ used to 
weight both perimeter and volume. More precisely, given a regular set $E\subset M$, 
we define the (weighted) 
volume and perimeter of $E$ to be
$$
|E|_f:=\int_E f\,dv,\qquad P_f(E):=\int_{\partial E} f\,da,
$$
where $dv$ and $da$ are volume and surface elements on $M$ (see also Section \ref{preliminaries} 
for precise definitions). Note that such a density is not equivalent to scaling the metric of $M$
conformally by some factor $\la$, since in that case 
perimeter and volume would scale by different powers of $\la$.

One of the first and most important examples of manifolds with density, with applications 
to probability and statistics, is Gauss space $\R^N$ with density $f(x)=e^{-|x|^2}$. 
Isoperimetric sets are half-spaces (\cite{Bor75The,SudTsi78Ext, CarKer01OnT}; see also 
\cite[Chapter 18]{Mor08Geo} or \cite[Section 3]{Ros05The}). More recently other authors 
(e.g. \cite{Ros05The, BayCanMorRos07OnT, CJQW})
have considered different examples of manifolds with density and generalised classical techniques: 
Steiner  and Schwarz symmetrisations \cite{Ros05The}, functional 
versions of the isoperimetric inequality \cite{Bob97AnI}, first and second variation formulae, mean 
curvature, Ricci curvature, and so on. We refer the reader also to \cite{Mor05Man, Mor08Geo} 
and references therein.

In this paper we will consider two kinds of densities on the plane $\R^2$: continuous modifications of
the Gaussian density and piecewise constant densities, apparently the first examples of discontinuous
ones. These questions arose in the lectures on ``Manifolds with density'' by F.~Morgan 
during the meeting ``GMTLAP-Geometric Measure Theory and Least Area Problems'' 
in Modena (Italy), 15-17th February 2007, organized 
by G.P.~Leonardi, R.~Monti, F.~Serra~Cassano, R.~Serapioni and I.~Tamanini.

The plan of the paper is the following. Section \ref{preliminaries} contains introductory material 
on manifolds
with density, perimeter, existence of isoperimetric sets, and first variation of the perimeter. 
We stress Proposition \ref{teofresnel}, which provides a Snell
refraction law for isoperimetric boundaries for discontinuous, piecewise constant densities.
We comment on minimal surfaces in manifolds with density and introduce calibrations, 
which provide sufficient conditions for minimality.

In Section \ref{SecPCDens} we investigate the isoperimetric problem in some cases of piecewise 
constant densities. 
The first one is the half-space density, i.e., the density on $\R^N$ 
taking values 1 on the half-space $\{x_N\le0\}$ and $\la>1$ on $\{x_N>0\}$, 
where $\la$ is a function depending only on the last coordinate $x_N$; 
in this case, the isoperimetric sets are provided by round balls 
in the half-space $\{x_N\le 0\}$ (Theorem~\ref{Serapioni}). 
The second one is the ``strip'' density on $\R^N$ taking values 1 if $|x_N|\leq 1$ and a
constant $\la>1$ otherwise; 
we show that isoperimetric sets exist and are connected for any given volume in the planar case, 
with some additional properties in general dimension. 
A further analysis leads us to Conjecture~\ref{conj:strip}, 
where we list the possible solutions in the planar setting. 
This conjecture is almost completely proved in Theorem~\ref{teostrip}.
The last density we consider is the 
``ball'' density on $\R^2$ with values $1$ outside the ball $B(0,1)$ and constant $\la\neq1$ inside: here isoperimetric 
sets exist for any given volume, are connected, and can assume several profiles, both in
the case $\la>1$ (see Theorem~\ref{ballmagg})
and the case $\la<1$ (see Theorem \ref{conj:ballmino1}). 

Section \ref{SecContDens} concerns with modifications of Gauss density on $\R^2$. A question risen during the GMTLAP meeting concerned the density $ye^{-(x^2+y^2)/2}$ on the half-plane $H=\{(x,y)\in \R^2 : y>0\}$. We recently discovered that a paper of Brock et al. \cite{BroChiMer08ACl} gives a complete answer to this problem, showing that sets bounded by vertical lines are isoperimetric. We point out that, from that result, the same holds true also for the whole plane. During the GMTLAP meeting also the isoperimetric problem in the plane with density $e^{-x^2-y^4}$ was risen: we conjecture that here the isoperimetric boundary is provided by vertical or horizontal lines. For certain volume bounds (those close to half of
the total volume) horizontal lines are better than vertical ones, while for other bounds vertical lines beat horizontal ones (see Proposition~\ref{propconj1}).


The last Section \ref{SecOpenQue} collects several open questions: some from this paper, others from the GMTLAP meeting.

{\small {\bf \em Acknowledgments.} A.~C. is partially supported by the MCyT research project MTM2007-61919;
D.V. was partially supported by University of Padova, GNAMPA of MIUR and GALA project - Geometric Analysis in Lie groups and Applications, supported
by the European Commission. 
The authors want to thank specially Frank Morgan, who inspired this work during the GMTLAP meeting, 
read kindly first manuscripts and gave important ideas and improvements during the preparation of this paper.

We further want, among the others, to acknowledge:
Giovanni Alberti, Susanna Ansaloni, Nicola Arcozzi, Francesco Bigolin, Alessio Brancolini, Luca Corbo Esposito, 
Thomas Meinguet, Laurent Moonens, Fabio Paronetto, C\'esar Rosales, Raul Serapioni, Francesco Serra Cassano, Jeremy Tyson.}

\section{The isoperimetric problem in a space with density}\label{preliminaries}

Our framework is an open subset $\Omega\subset \Rn$ (we deal primarily with the cases
of the whole space and the upper half-plane) endowed with a lower--semicontinuous, positive 
density function $f:\Omega\to \R^+$; due to the positivity of $f$, we shall sometimes write
$f=e^\psi$. For a given measurable set $E\subset \Rn$, we define (according to \cite{Mor05Man}) its $f$--volume by
$$
|E|_f =\int_{E\cap \Omega} f(x)dx
$$
and its $f$--perimeter measure by
$$
P_f(E,\Omega)= 
\liminf_{h\to +\infty}
\int_{\partial E_h \cap \Omega} fd\Hnmu
$$
where the $\liminf$ is taken among all smooth sets $E_h$ converging to $E$ in the
$L^1(\Omega)$--topology.
If no confusion may arise, we will just say volume and perimeter.
We say that $E$ has finite perimeter in $\Omega$ if
$P_f(E,\Omega)<+\infty$; whenever the set $\Omega$ is clear from the context,
we shall simply write $P_f(E)$. The lower--semicontinuity of $f$ ensures the equality 
$$
P_f(E)=\int_{\partial E\cap \Omega} f(x)d\Hnmu(x)
$$
for smooth sets. 

We also notice that, by restricting to the open sets 
$$
\Omega_k=\{x\in \Omega: 1/k<f(x)<k\},
$$
finite perimeter sets and finite Euclidean perimeter sets
in $\Omega_k$ coincide for all $k>0$. By well--known results on finite
perimeter sets (see \cite{DeG54SuU, DeG55Nuo, DeG06Sel, Fed69Geo}), 
there exists a subset ${\cal F}E$ of the
topological boundary $\partial E$ which is rectifiable; that is, up to
$\Hnmu$--negligible sets, is contained in a countable union of Lipschitz
graphs. We point out that for a $(N-1)$--rectifiable set $\Sigma$ the unit normal vector
$\nu_\Sigma$ is defined $\Hnmu$--almost everywhere.
The set ${\cal F}E$ is called {\em reduced boundary} of $E$; for sake of
simplicity, 
we will sometimes write $\Sigma$ instead of ${\cal F}E$ and, if not
differently stated, $\nu_\Sigma$ will denote the inward unit normal
vector. 
Mostly important, this part of the boundary is the
essential part for the perimeter measure; that is, the following representation
formula holds:
$$
P_f(E,\Omega_k)=\int _{{\cal F}E\cap \Omega_k} f(x)d\Hnmu(x). 
$$
Using the fact that $\Omega=\cup \Omega_k$, we also obtain that
$$
P_f(E)=\int_{{\cal F}E\cap \Omega} f(x)d\Hnmu(x).
$$

We shall use next result.

\begin{lemma}
If the density $f$ satisfies $f\geq \eta>0$ on $\Omega$, then $P_f(E,\Omega)\geq \eta P(E,\Omega)$; 
moreover, in the case $N=2$, if $E$ is smooth and connected, we have 
\begin{equation}\label{estdiamper}
{\rm diam}_\Omega (E)\leq \frac{1}{2}P(E,\Omega) \leq \frac{1}{2\eta} P_f(E,\Omega). 
\end{equation}
\end{lemma}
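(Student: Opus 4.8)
The plan is to split the statement into its two assertions and treat them in order. For the first inequality, I would work at the level of smooth approximating sets appearing in the definition of $P_f$: if $E_h \to E$ in $L^1(\Omega)$ with $E_h$ smooth, then for each $h$ one has the pointwise bound $\int_{\partial E_h \cap \Omega} f\,d\Hnmu \geq \eta \int_{\partial E_h \cap \Omega} d\Hnmu = \eta\, \Hnmu(\partial E_h \cap \Omega)$, simply because $f \geq \eta$ on $\Omega$. Passing to the $\liminf$ over all such sequences and recalling that the Euclidean perimeter satisfies $P(E,\Omega) = \liminf_h \Hnmu(\partial E_h \cap \Omega)$ over the same class of approximations, I obtain $P_f(E,\Omega) \geq \eta\, P(E,\Omega)$. (Alternatively, if one prefers the reduced-boundary representation, the same inequality follows immediately from $P_f(E,\Omega_k) = \int_{{\cal F}E \cap \Omega_k} f\,d\Hnmu \geq \eta\, \Hnmu({\cal F}E \cap \Omega_k) = \eta\, P(E,\Omega_k)$ and letting $k \to \infty$ via $\Omega = \cup_k \Omega_k$.)

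For the chain \eqref{estdiamper} in the case $N=2$, the right-hand inequality $\tfrac12 P(E,\Omega) \leq \tfrac1{2\eta} P_f(E,\Omega)$ is just the first assertion divided by $2\eta$, so the real content is the left-hand inequality ${\rm diam}_\Omega(E) \leq \tfrac12 P(E,\Omega)$ for $E$ smooth and connected. Here I would argue as follows: since $\Omega \subset \R^2$ and $E$ is smooth and connected, its topological boundary $\partial E$ is a disjoint union of closed curves, and connectedness of $E$ lets me pick two points $p, q \in \overline{E}$ realizing (or approximating, if the supremum is not attained within $\Omega$) the diameter ${\rm diam}_\Omega(E)$. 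Choosing a component $\Gamma$ of $\partial E \cap \Omega$ and using that $E$ is connected with boundary $\Gamma$, any two boundary points are joined by two complementary arcs of $\Gamma$ whose lengths sum to $\Hnmu(\Gamma) \leq P(E,\Omega)$; the shorter arc has length $\leq \tfrac12 P(E,\Omega)$, and the straight-line distance between the two points is bounded by this arc length. Taking the points to be (near-)diametral yields ${\rm diam}_\Omega(E) \leq \tfrac12 P(E,\Omega)$.

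The step I expect to require the most care is precisely this last geometric bound: one must handle the possibility that $\Omega$ is a proper subset (the half-plane), so that $\partial E \cap \Omega$ need not be a union of \emph{closed} curves — it may consist of arcs terminating on $\partial\Omega$ — and also the possibility that the diameter is measured between points on $\partial\Omega \cap \overline{E}$. In the half-plane case one should either restrict attention to the configurations actually used later in the paper (where $E$ is compactly contained, or the relevant boundary piece is a single closed curve or a single arc with endpoints on the line $\{y=0\}$), or argue via projection: for a connected set, the projection onto any line is an interval whose length is at most half the perimeter, and the diameter is the supremum over directions of these projected lengths. I would present the projection argument as the clean general version, noting that for a connected smooth set $E \subset \R^2$ and any unit vector $e$, the set $\{\langle x, e\rangle : x \in E\}$ is an interval of length $\ell_e$, and the inner boundary (being a curve, or union of curves and arcs, that must ``cover'' both extreme values of $\langle \cdot, e\rangle$ and return) has $\Hnmu$-measure at least $2\ell_e$; maximizing over $e$ gives $2\,{\rm diam}_\Omega(E) \leq P(E,\Omega)$.
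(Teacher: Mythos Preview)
The paper states this lemma without proof, treating both assertions as elementary, so there is no argument in the paper to compare against. Your treatment is correct in the case that matters: the paper only ever invokes \eqref{estdiamper} with $\Omega=\R^2$ (in Proposition~\ref{PropExistN2}, Theorem~\ref{teostrip}, and the existence result for the ball density), and there the projection argument you sketch at the end is the right way to obtain ${\rm diam}(E)\leq\tfrac12 P(E)$ for smooth connected $E$. Your reduced-boundary proof of $P_f\geq\eta P$ is also correct and tidier than the approximating-sequence version, where one should write $P(E,\Omega)\leq\liminf_h\Hnmu(\partial E_h\cap\Omega)$ for each fixed sequence rather than claim equality.

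One genuine overclaim: the projection argument is \emph{not} a ``clean general version'' when $\Omega$ is a proper subdomain and $\partial E$ meets $\partial\Omega$. If $\partial E\cap\Omega$ is a single arc with endpoints on $\partial\Omega$, then in the direction $e$ parallel to $\partial\Omega$ each level line $\{\langle x,e\rangle=t\}$ need meet $\partial E\cap\Omega$ only once, so coarea yields $P(E,\Omega)\geq\ell_e$ rather than $2\ell_e$, and the factor $\tfrac12$ is lost. Concretely, for the half-disc of radius $R$ in the upper half-plane one has diameter $2R$ and relative perimeter $\pi R$, so the claimed bound would read $2R\leq\tfrac\pi2 R$, which is false. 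The correct resolution is the one you list first --- restrict to $E$ with $\partial E\subset\Omega$ --- and that is all the paper's applications require.
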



Since we allow discontinuities in $f$, we introduce the 
following notation for $f\in L^1_{\rm loc}(\Omega)$. For fixed $\nu\in {\mathbb S}^{N-1}$, let
$$
B^+_\nu (x,\vrho)=\{x\in B(x,\vrho): \scal{x}{\nu} \geq 0\}, \quad
B^-_\nu (x,\vrho)=\{x\in B(x,\vrho): \scal{x}{\nu} \leq 0\},
$$
where $B(x,\vrho)$ is the open ball of center $x$ with radius $\vrho$ (balls, unless stated otherwise, 
are always assumed to be open). Moreover, we define
the limits of $f$ in the positive and negative direction of the vector $\nu$ by
\begin{equation}\label{AppLimit}
f^{\pm}_\nu(x)=\lim_{\vrho\to 0} \frac{1}{|B^{\pm}_\nu(x,\vrho)|} \int_{B^{\pm}_\nu(x,\vrho)} f(y)dy.
\end{equation}
The quantity $(f^+_\nu-f^-_\nu)\nu$ does not change if we replace $\nu$ with $-\nu$, so we will assume, if not
stated otherwise, that $\nu$ is chosen in such a way that $f^-_\nu\leq f^+_\nu$.
If $X$ is any nonzero vector, we denote
by $f^{\pm}_X$ the previous quantities in the direction $\nu=\frac{X}{|X|}$.


We collect in the following Proposition a little list of known results in the isoperimetric problem 
we shall use and generalise. A very nice description of these properties is contained in
\cite{BayCanMorRos07OnT}. We recall that a variation is given by a one--parameter family of diffeomorphisms
$\Phi_t:\Rn\to \Rn$; for such a given family, we define 
$$
V(t)=|\Phi_t(E)|_f, \qquad
P(t)=P_f(\Phi_t(E)).
$$

\begin{prop}
In a region $\Omega$ with density $f$, consider the isoperimetric problem of finding a closed
``isoperimetric set'' $E$ of least perimeter for prescribed volume $v$.
\begin{enumerate}
\item
If $|\Omega|_f<+\infty$, an isoperimetric set always exists.
\item
If $f$ is Lipschitz continuous, then $E$ is locally 
$\ci^{1,1}$ except for a singular set of Hausdorff codimension $8$ in $\Omega$.
\end{enumerate}
\end{prop}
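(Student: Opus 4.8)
I would prove (1) by the direct method of the calculus of variations, and (2) by reducing the volume-constrained problem to the classical regularity theory for almost-minimisers of perimeter. Here is the plan.

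For (1): assuming, as is true in all cases of interest (e.g.\ using balls), that $I(v):=\inf\{P_f(F):|F|_f=v\}<+\infty$, pick a minimising sequence $E_h$ with $|E_h|_f=v$ and $P_f(E_h)\to I(v)$. On each $\Omega_k=\{1/k<f<k\}$ one has $P(E_h,\Omega_k)\le k\,P_f(E_h,\Omega_k)\le k\,P_f(E_h)$, so the Euclidean perimeters are bounded on $\Omega_k$ uniformly in $h$; by the compactness theorem for sets of finite perimeter and a diagonal argument over $k$, a subsequence of $\chi_{E_h}$ converges in $L^1_{\rm loc}(\Omega)$ and a.e.\ in $\Omega=\bigcup_k\Omega_k$ to a characteristic function $\chi_E$. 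Lower semicontinuity of the weighted perimeter (valid since $f$ is lower-semicontinuous) gives $P_f(E,\Omega_k)\le\liminf_h P_f(E_h,\Omega_k)\le I(v)$ for each $k$, and letting $k\to\infty$ (monotone convergence along the exhaustion) yields $P_f(E)\le I(v)$. Finally, $|\Omega|_f<+\infty$ means $f\in L^1(\Omega)$, so $f\chi_{E_h}\le f$ and dominated convergence gives $|E|_f=\lim_h|E_h|_f=v$; hence $E$ is isoperimetric. The hypothesis $|\Omega|_f<+\infty$ enters precisely here, to rule out escape of volume to infinity.

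For (2): I would first remove the volume constraint locally, then invoke the off-the-shelf regularity theory. The key preliminary is the standard \emph{volume-adjusting lemma}: given an isoperimetric set $E$ and $x_0\in\partial E$, fix a small ball $B_0$ at positive distance from a neighbourhood of $x_0$ and a smooth compactly supported vector field $Y$ on $B_0$ whose flow $\Phi_t$ satisfies $\frac{d}{dt}\big|_{t=0}|\Phi_t(E)|_f\neq 0$; composing local modifications of $E$ inside a small ball $B(x_0,r)$ with such a flow, one restores the $f$-volume to $v$ at the cost of an extra perimeter term controlled by $C\,|E\triangle E'|_f\le C'\,|E\triangle E'|$. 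Since $f$ is Lipschitz, hence locally bounded between two positive constants, comparing $P_f$ with the Euclidean perimeter $P$ on small balls then shows that a suitably normalised representative of $E$ is a $(\Lambda,r_0)$-minimiser of $P$: $P(E,B(x,r))\le P(E',B(x,r))+\Lambda\,|E\triangle E'|$ whenever $E\triangle E'\Subset B(x,r)\subset\Omega$ and $r<r_0$. By the classical regularity theory for such almost-minimisers (De Giorgi--Federer--Tamanini), the reduced boundary ${\cal F}E$ is then an embedded hypersurface of class $\ci^{1,\al}$ for every $\al<1$, while the singular set $\partial E\setminus{\cal F}E$ is closed and, by Federer's dimension-reduction argument together with the absence of singular area-minimising hypercones in $\R^n$ for $n\le 7$ (Simons; Bombieri--De Giorgi--Giusti), has Hausdorff dimension at most $N-8$, i.e.\ codimension $8$ in $\Omega$. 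To upgrade $\ci^{1,\al}$ to $\ci^{1,1}$ I would use the first variation of the weighted perimeter: the Euler--Lagrange equation with a Lagrange multiplier says that on ${\cal F}E$ the generalised mean curvature equals $c-\scal{\nabla\psi}{\nu}$ for some $c\in\R$, and since $f=e^\psi$ is Lipschitz and locally bounded below, $\nabla\psi=\nabla\log f\in L^\infty_{\rm loc}(\Omega)$; thus ${\cal F}E$ has locally bounded mean curvature, and elliptic regularity for the prescribed-mean-curvature equation then gives the claimed local $\ci^{1,1}$ regularity.

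The step I expect to be the crux is the volume-adjusting lemma and the consequent reduction to a \emph{fixed}-$\Lambda$ Euclidean almost-minimiser; once that is in hand, everything in (2) is standard. For a Lipschitz density this reduction is routine, but it is exactly the point that becomes delicate for the discontinuous, piecewise constant densities treated later in the paper. A secondary technical point, underlying both (1) and (2), is to verify that the relaxed definition of $P_f$ (through smooth approximations) is genuinely the functional for which compactness, lower semicontinuity and the comparison arguments above hold; this is what the representation formula $P_f(E,\Omega_k)=\int_{{\cal F}E\cap\Omega_k}f\,d\Hnmu$ and the coincidence of finite-$P_f$ sets with finite-Euclidean-perimeter sets in $\Omega_k$ are there to guarantee.
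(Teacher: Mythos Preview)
Your argument is correct, but it differs in emphasis from the paper's own proof. For part~(1) the paper does not go through BV compactness on the exhaustion $\Omega_k$ at all: it simply observes that, with $d\mu=f\,dx$ a finite measure, the family $\{\chi_{E_h}\}$ is equi-integrable in $L^1(\Omega,\mu)$ and hence relatively compact there, and then invokes lower semicontinuity of $P_f$. Your route---local BV compactness on each $\Omega_k$, a diagonal argument to get a.e.\ convergence on all of $\Omega$, then dominated convergence (using $f\in L^1(\Omega)$) for the volume constraint---is a genuinely different and in fact more transparent compactness argument: it makes explicit where the perimeter bound is used (to extract the a.e.\ convergent subsequence) and where the finite-mass hypothesis is used (only to pass to the limit in the volume). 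The paper's one-line argument, by contrast, hides the role of the perimeter bound, since equi-integrability alone yields only weak $L^1$ compactness, whose limits need not be characteristic functions; one still needs something like your BV step to upgrade to a.e.\ convergence.

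For part~(2) the paper gives no argument at all and simply cites Morgan \cite{Mor03Reg}. What you have written is a correct outline of the standard proof that that reference contains: the volume-adjustment lemma, the reduction to an unconstrained $(\Lambda,r_0)$-minimiser of Euclidean perimeter, the De~Giorgi--Federer--Tamanini $\ci^{1,\alpha}$ regularity together with Federer dimension reduction for the singular set, and finally the bootstrap to $\ci^{1,1}$ via the bounded generalised mean curvature. So here you have supplied strictly more than the paper does, and your identification of the volume-adjusting step as the crux---and as the place where things break down for the discontinuous densities treated later---is exactly right.
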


\begin{proof}
We sketch here the proofs; we fix $v\in (0,|\Omega|_f)$ and a minimising sequence of sets $E_h$ with $|E_h|_f=v$. 
Condition $|\Omega|_f<+\infty$, by setting $d\mu=fdx$ implies that the functions $\chi_{E_h}$ are
equi--integrable in $L^1(\Omega,\mu)$, so they are relatively compact. Up to subsequences, $\chi_{E_h}$ converges 
to a function $g\in L^1(\Omega,\mu)$; by the a.e. convergence, $g$ has to be a characteristic function $\chi_E$.
The fact that $|E|_f=v$ follows by continuity of the measure and
$$
P_f(E)\leq \liminf_{h\to +\infty}P_f(E_h)
$$
follows by lower--semicontinuity of the perimeter measure, so $E$ is an isoperimetric set. 

The proof of regularity can be found in Morgan \cite[Proposition 3.5, Corollary 3.8 and Remark 3.10]{Mor03Reg}.
\end{proof}

\begin{rmk}\label{rmkMainProperties}
We collect here some necessary conditions for a set $E$ to be isoperimetric, under the assumption that $f$ is $C^1$
(see Rosales et al. \cite{BayCanMorRos07OnT}):
\begin{enumerate}
\item
$\Sigma$ is stationary under volume preserving variations, i.e. 
$P'(0)=0$ whenever $V'(0)=0$;
\item
there exists a constant $H_0$ such that for any variation
\begin{equation}\label{varprima}
(P-H_0V)'(0)=0;
\end{equation}
\item
the generalised mean curvature 
\begin{equation}\label{first_variation}
H_\psi=(N-1)H_\Sigma-\scal{\nabla \psi}{\nu_\Sigma},
\end{equation}
with $H_\Sigma$ the Euclidean mean curvature, is constant. 
\end{enumerate}
\end{rmk}

In Section \ref{SecPiecConstGen} we shall see how to generalise these conditions
to the case of piecewise regular density.

\subsection{Minimal surfaces and calibrations}
We extend to manifolds with density the classical method of
calibrations (see e.g. Definition 4.1 and Theorem 4.2 of Harvey-Lawson 
\cite{HarLaw82Cal} or Federer \cite{Fed69Geo}), giving sufficient conditions for a surface 
to be perimeter minimising. 

We say that a set $E\subset\R^N$ of finite perimeter is {\em perimeter
minimising} (without volume constraint) in an open set $\Omega\subset\R^N$ if
$$
P_f(E,\Omega)\leq P_f(F,\Omega)
$$
for any set $F$ such that $E\Delta F:=(E\setminus F)\cup(F\setminus E)\Subset\Omega.$ 

\begin{theo}\label{calibrations}
Suppose $\Omega\subset \Rn$ is an open set and $E\subset\R^N$ has finite perimeter in $\Omega$.
Moreover, suppose that there exists a
$\ci^1$ function $g:\Omega\to\R^N$ such that 
\begin{itemize}
\item $|g(x)|\leq 1$ for any $x\in\Omega$;
\item $g\equiv\nu_E\ \Hnmu$-a.e. on ${\cal F}E$; 
\item $\diver (fg)=0$ in the distributional sense.
\end{itemize}
Then $E$ is perimeter minimising in $\Omega$.
\end{theo}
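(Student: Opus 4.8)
The plan is to show that the weighted perimeter of $E$ is not larger than that of any competitor $F$ with $E\Delta F\Subset\Omega$, by using $fg$ as a divergence-free calibrating vector field and integrating it against the (reduced) boundaries of $E$ and $F$. First I would fix such a competitor $F$ and choose an open set $A$ with $E\Delta F\Subset A\Subset\Omega$; outside $A$ the two sets agree, so it suffices to compare $P_f(E,A)$ and $P_f(F,A)$. The key identity is the Gauss--Green / divergence theorem for sets of finite perimeter applied to the $C^1$ vector field $fg$ on the set $E$ (and on $F$): for a bounded set $G$ of finite perimeter with $\partial G\cap A$ away from $\partial A$,
$$
\int_{{\cal F}G\cap A} \scal{fg}{\nu_G}\, d\Hnmu \;=\; \int_{G\cap A} \diver(fg)\,dx \;+\;(\text{boundary terms on }\partial A),
$$
and since $\diver(fg)=0$ in the distributional sense and $E$, $F$ coincide near $\partial A$, subtracting the two identities for $E$ and $F$ kills the interior term and the $\partial A$ contributions, leaving
$$
\int_{{\cal F}E\cap A}\scal{fg}{\nu_E}\,d\Hnmu \;=\; \int_{{\cal F}F\cap A}\scal{fg}{\nu_F}\,d\Hnmu.
$$

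With this identity in hand the estimate is immediate: on ${\cal F}E$ we have $g\equiv\nu_E$, hence $\scal{fg}{\nu_E}=f|\nu_E|^2=f$ (as $|\nu_E|=1$), so the left-hand side equals $\int_{{\cal F}E\cap A}f\,d\Hnmu=P_f(E,A)$. On ${\cal F}F$ we only know $|g|\le1$, so by Cauchy--Schwarz $\scal{fg}{\nu_F}\le f|g|\,|\nu_F|\le f$ (using $f>0$), whence the right-hand side is $\le\int_{{\cal F}F\cap A}f\,d\Hnmu=P_f(F,A)$. Combining, $P_f(E,A)\le P_f(F,A)$, and adding back the common part $P_f(E,\Omega\setminus A)=P_f(F,\Omega\setminus A)$ gives $P_f(E,\Omega)\le P_f(F,\Omega)$, which is the claim. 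One should also remark that the representation formula $P_f(E)=\int_{{\cal F}E}f\,d\Hnmu$ recalled in the preliminaries is what lets us pass freely between the perimeter measure and these boundary integrals.

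The main technical obstacle is making the divergence-theorem step fully rigorous when $f$ (hence $fg$) is merely $C^1$ and, more delicately, when $f$ is allowed to be only lower-semicontinuous or piecewise constant, as in the applications: then $\diver(fg)=0$ must be read distributionally, and the Gauss--Green formula for finite-perimeter sets has to be invoked in the form valid for, say, $fg$ a bounded vector field whose distributional divergence is a (signed) measure — classical divergence-measure field theory. The cleanest route is to localize via the sets $\Omega_k=\{1/k<f<k\}$ introduced above, on which Euclidean and weighted finite-perimeter sets coincide and $fg$ is a bounded Lipschitz-scale field, prove the inequality on each $\Omega_k\cap A$, and let $k\to\infty$ using $\Omega=\cup_k\Omega_k$ together with the monotone-convergence/representation formula for $P_f$. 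A secondary point to be careful about is the "boundary term on $\partial A$": since $E$ and $F$ coincide in a neighborhood of $\partial A$, one may simply choose $A$ with $\partial A$ lying in that neighborhood and smooth, so that the traces of $\chi_E$ and $\chi_F$ on $\partial A$ agree and the terms cancel in the subtraction; alternatively, one phrases everything with test functions supported in $A$ and never sees $\partial A$ at all.
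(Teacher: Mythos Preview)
Your proof is correct and follows essentially the same calibration argument as the paper: both apply the divergence theorem to the field $fg$ on an intermediate domain $A$ (the paper's $\Omega'$), use $\diver(fg)=0$ together with $E=F$ near $\partial A$ to equate the boundary fluxes, and then conclude via $g=\nu_E$ on ${\cal F}E$ and $|g|\le 1$ on ${\cal F}F$. Your additional remarks on handling merely lower-semicontinuous $f$ via the sets $\Omega_k$ and divergence-measure field theory go a bit beyond what the paper spells out, but the core argument is the same.
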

\begin{proof}
Let $F$ be another open set of finite perimeter in $\Omega$ with
$E\Delta F\Subset\Omega'\Subset\Omega$, where without loss of
generality we assume $\Omega'$ to have smooth boundary with finite
surface measure. Since $\diver (fg)=0$, by the divergence theorem 
\begin{eqnarray*}
P_f(E,\Omega) &=& P_f(E,\Omega\setminus\Omega')+\int_{{\cal F}
  E\cap\Omega'} fd\Hnmu \\ 
&=& P_f(E,\Omega\setminus\Omega')+\int_{{\cal F} E\cap\Omega'} \scal{fg}{\nu_E} d\Hnmu\\ 
&=& P_f(E,\Omega\setminus\Omega')- \int_{\partial \Omega'\cap E}
\scal{fg}{\nu_{\Omega'}} d\Hnmu\\ 
&=& P_f(F,\Omega\setminus\Omega')- \int_{\partial \Omega'\cap F}\scal{fg}{\nu_{\Omega'}}d\Hnmu\\ 
&=& P_f(F,\Omega\setminus\Omega')+\int_{{\cal F} F\cap\Omega'}
\scal{fg}{\nu_F}d\Hnmu\\ 
&\leq& P_f(F,\Omega\setminus\Omega')+P_f(F,\Omega')\ =\ P_f(F,\Omega).
\end{eqnarray*}
\end{proof}

\begin{ex}
Consider $\R^2$ with density $f(x,y)=(x^2+y^2)^{1/2}$; then the curve
$(x,\sqrt{1+x^2})$ is area minimising, in the sense that its
epigraph is perimeter--minimising. In fact, a calibration is given by 
$$
g(x,y):=\left(-\frac{x}{\sqrt{x^2+y^2}},\frac{y}{\sqrt{x^2+y^2}}\right)
$$
which extends the normal vector to the curve and satisfies the
hypotheses of Theorem~\ref{calibrations}. Notice that the singularity
of $g$ in 0 does not affect the validity of our argument. 
\end{ex}

\begin{rmk}
Suppose $S$ is a hypersurface in $\R^N$ (with a $\ci^1$ regular density $f$) which
coincides with the graph of a $\ci^1$ function $\f:\R^{N-1}\to\R$,
i.e. 
$$S=\{(x,x_N):x\in\R^{N-1},x_N=\f(x)\}.$$
It is immediate to see that the surface measure of $S$ is given by
$$
\sigma_f(S)=\int_{\R^{N-1}}\sqrt{1+|\nabla\f(x)|^2}\,f(x,\f(x)) dx.
$$ 
For area--minimising hypersurfaces the generalised
mean curvature introduced in (\ref{first_variation}) vanishes;
this means that $\f$ satisfies the minimal surface equation 
\begin{equation}\label{MSE}
\diver\left(\frac{f(x,\f(x)) \nabla\f(x)}{\sqrt{1+|\nabla\f(x)|^2}}\right) 
+ \sqrt{1+|\nabla\f(x)|^2} \frac{\partial f}{\partial x_N}(x,\f(x))=0
\quad\text{in }\R^{N-1}\,. 
\end{equation}
\end{rmk}

\begin{rmk}
We observe in passing that, in Gauss space $\R^N$ with density
$e^{-|x|^2}$, half--spaces through the origin have boundary (seen as a
graph) satisfying~\eqref{MSE} (they have vanishing mean curvature) but
are not perimeter minimising in $\R^N$, although they are isoperimetric for
volume $1/2$ \cite{Bor75The,SudTsi78Ext}. It can be proved in fact that there are no
complete perimeter--minimising surfaces in Gauss space. Suppose by contradiction
that there exists a measurable set $E$ which is perimeter--minimising
in $\R^N$. Choose an open bounded
set $\mathcal U$ such that $P_f(E,\mathcal U)>0$. Let us consider a
sufficiently large $R>0$ such that $\mathcal U\Subset B_R=B(0,R)$ and 
$$
P_f({B_R},\R^N)=N\omega_NR^{N-1}e^{-R^2}<P_f(E,\mathcal U),
$$
and set $F:=E\setminus B_R$. It is easily seen that
\begin{align*}
P_f(F,B_{R+1})&\leq P_f(F,B_{R+1}\setminus\overline{B_R})+P_f(B_R,\R^N)\\
&< P_f(F,B_{R+1}\setminus\overline{B_R}) + P_f(E,\mathcal U) \leq P_f(E,B_{R+1}),
\end{align*}
contradicting the minimality of $E$.
\end{rmk}

\begin{theo}\label{calgraphs}
Suppose that $f$ is a regular density in $\R^N=\R^{N-1}_x\times\R_{x_N}$ which
does not depend on the last coordinate $x_N$, and suppose that
$\f: \omega\subset \R^{N-1}\to\R$ satisfies the minimal surface
equation~\eqref{MSE}. Then the graph of $\f$ is perimeter--minimising in
$\omega\times\R$. 
\end{theo}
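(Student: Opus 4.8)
The plan is to construct an explicit calibration $g$ for the graph of $\f$ and apply Theorem~\ref{calibrations} with $\Omega=\omega\times\R$. Since $\f$ satisfies the minimal surface equation~\eqref{MSE} and $f$ does not depend on $x_N$, the natural candidate is the unit normal field to the level sets of the function $(x,x_N)\mapsto x_N-\f(x)$, namely
\begin{equation}\label{calibfield}
g(x,x_N):=\frac{1}{\sqrt{1+|\nabla\f(x)|^2}}\,\big(-\nabla\f(x),\,1\big),
\end{equation}
which is independent of $x_N$. The first two bullets of Theorem~\ref{calibrations} are immediate: $|g|\equiv 1$, and on the graph $g$ restricts to the (upward) unit normal, so $g\equiv\nu_E$ on $\mathcal F E$ where $E$ is the epigraph $\{x_N>\f(x)\}$. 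The crux is the third bullet, $\diver(fg)=0$.

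First I would verify this in the smooth/classical sense. Writing $W(x):=\sqrt{1+|\nabla\f(x)|^2}$, we have $fg=\big(-\,f\nabla\f/W,\ f/W\big)$ with $f=f(x)$ depending only on $x$; hence
\[
\diver(fg)=\operatorname{div}_x\!\left(-\frac{f(x)\nabla\f(x)}{W(x)}\right)+\frac{\partial}{\partial x_N}\!\left(\frac{f(x)}{W(x)}\right)
=-\operatorname{div}_x\!\left(\frac{f(x)\nabla\f(x)}{W(x)}\right),
\]
because the last term vanishes ($f/W$ is independent of $x_N$). Comparing with~\eqref{MSE}, the right-hand side is exactly $\sqrt{1+|\nabla\f(x)|^2}\,\partial f/\partial x_N$, which is $0$ by hypothesis. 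Thus $\diver(fg)=0$ pointwise wherever $\f\in C^2$. The second, genuinely delicate point is that Theorem~\ref{calibrations} and~\eqref{MSE} only assume $\f\in C^1$, so I must either upgrade regularity or interpret $\diver(fg)=0$ distributionally. The cleanest route is to observe that a $C^1$ solution of~\eqref{MSE} (a quasilinear uniformly elliptic equation with smooth coefficients, since $W\ge1$ and $f$ is regular and bounded away from $0$ locally) is automatically smooth by standard elliptic regularity; alternatively, one tests $\diver(fg)$ against $\varphi\in C_c^\infty(\omega\times\R)$, integrates the $x_N$-derivative by parts to kill it using $x_N$-independence of $fg$, and recognizes the remaining $x$-integral as the weak form of~\eqref{MSE}.

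Once the three hypotheses are checked, Theorem~\ref{calibrations} applied to $E=\{(x,x_N): x_N>\f(x)\}$ yields that $E$ is perimeter-minimising in $\omega\times\R$, which is precisely the assertion that the graph of $\f$ is perimeter-minimising there. I expect the main obstacle to be the regularity bookkeeping in the previous paragraph — making the passage from the $C^1$ hypothesis to a rigorous (classical or distributional) verification of $\diver(fg)=0$ airtight, including the local lower bound $f\ge\eta>0$ needed so that the Euclidean and $f$-perimeters are comparable (Lemma preceding~\eqref{estdiamper}) and the competitor sets in the definition of perimeter-minimising are controlled. The algebraic identity linking~\eqref{calibfield} to~\eqref{MSE} is routine; everything hinges on that regularity step.
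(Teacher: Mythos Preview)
Your proposal is correct and follows essentially the same route as the paper: construct the obvious calibration by extending the unit normal of the graph vertically (using that it is $x_N$-independent), check that $\diver(fg)$ reduces to the $(N-1)$-dimensional divergence appearing in~\eqref{MSE} because the $x_N$-partial vanishes, and invoke Theorem~\ref{calibrations}. The only cosmetic difference is a sign convention (your $g$ points upward, the paper's downward), and your discussion of the $C^1$-versus-$C^2$ regularity issue is more scrupulous than the paper's own proof, which simply computes formally without comment.
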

\begin{proof}
With our hypotheses on $f$ the minimal surface
equation~\eqref{MSE} becomes 
\begin{equation}\label{MSE*}
\diver_{\R^{N-1}}\left(\frac{f
  \nabla\f}{\sqrt{1+|\nabla\f|^2}}\right)=0 \quad\text{in }\omega\,, 
\end{equation}
where of course we interpret $f$ as a function of $N-1$ coordinates
only. Then the normal to the graph at a point $(x,\f(x))$ is given by 
$$\left( \frac{\nabla\f(x)}{\sqrt{1+|\nabla\f(x)|^2}}\,,\,
-\frac{1}{\sqrt{1+|\nabla\f(x)|^2}} \right),$$ 
and if we set
$$g(x,x_N):= \left( \frac{\nabla\f(x)}{\sqrt{1+|\nabla\f(x)|^2}}\,,\,
-\frac{1}{\sqrt{1+|\nabla\f(x)|^2}}
\right),\qquad(x,x_N)\in\omega\times\R,$$ 
by~\eqref{MSE*} we obtain
$$\diver_{\R^N}\, (fg)=\diver_{\R^{N-1}}\left(\frac{f
  \nabla\f}{\sqrt{1+|\nabla\f|^2}}\right)=0\,.$$ 
Therefore the hypotheses of Theorem~\ref{calibrations} are satisfied,
  and the graph of $\f$ is perimeter--minimising.
\end{proof}

\begin{cor}\label{corhyperplanes}
Suppose $\R^N$ is endowed with a density independent on the last
coordinate $x_N$; then all affine hyperplanes $\{x_N=const\}$ are
perimeter--mi\-ni\-mising surfaces.
\end{cor}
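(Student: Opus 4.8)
The plan is to derive Corollary~\ref{corhyperplanes} as a direct special case of Theorem~\ref{calgraphs}. A hyperplane $\{x_N = c\}$ is exactly the graph of the constant function $\f \equiv c$ defined on all of $\R^{N-1}$, so $\nabla \f \equiv 0$. First I would observe that with $\nabla\f = 0$ the minimal surface equation~\eqref{MSE*} reads $\diver_{\R^{N-1}}(0) = 0$, which holds trivially in $\omega = \R^{N-1}$, regardless of the density $f$ (which by hypothesis does not depend on $x_N$). Hence the hypotheses of Theorem~\ref{calgraphs} are satisfied with $\omega = \R^{N-1}$.

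Applying Theorem~\ref{calgraphs} then yields that the graph of $\f$, namely the hyperplane $\{x_N = c\}$, is perimeter-minimising in $\omega \times \R = \R^N$. One may note explicitly that the calibration produced in the proof of Theorem~\ref{calgraphs} specializes to the constant vector field $g(x, x_N) = (0, \dots, 0, -1)$, which evidently satisfies $|g| = 1$, extends the (constant) normal to the hyperplane, and has $\diver(fg) = -\partial f/\partial x_N = 0$ by the independence assumption; this gives a self-contained verification via Theorem~\ref{calibrations} without even invoking Theorem~\ref{calgraphs}.

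There is essentially no obstacle here: the statement is a degenerate instance of the preceding theorem, and the only thing to check is that the constant function solves~\eqref{MSE*}, which is immediate. The mild subtlety worth a sentence is that perimeter-minimising is meant in the sense without volume constraint (as defined before Theorem~\ref{calibrations}), i.e. against compactly supported perturbations, so the conclusion does not contradict the fact that such hyperplanes need not be isoperimetric for a prescribed volume — exactly the phenomenon highlighted in the Gauss space remark above.

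\begin{proof}
Any affine hyperplane $\{x_N = c\}$ is the graph of the constant function $\f \equiv c$ on $\omega = \R^{N-1}$. Since $\nabla\f \equiv 0$, equation~\eqref{MSE*} becomes $\diver_{\R^{N-1}}(0) = 0$, which holds trivially; note that here we use that $f$ does not depend on $x_N$, so that~\eqref{MSE} reduces to~\eqref{MSE*}. Thus the hypotheses of Theorem~\ref{calgraphs} are fulfilled, and the graph of $\f$, i.e. the hyperplane $\{x_N = c\}$, is perimeter--minimising in $\omega \times \R = \R^N$. Equivalently, one checks directly that the constant vector field $g \equiv (0,\dots,0,-1)$ is a calibration in the sense of Theorem~\ref{calibrations}: it has unit norm, it agrees with the normal to the hyperplane, and $\diver(fg) = -\partial f/\partial x_N \equiv 0$.
\end{proof}
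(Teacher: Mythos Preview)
Your proof is correct and takes exactly the approach the paper intends: the corollary is stated immediately after Theorem~\ref{calgraphs} with no proof given, precisely because a hyperplane $\{x_N=c\}$ is the graph of the constant function $\f\equiv c$, which trivially satisfies~\eqref{MSE*}. Your additional remark on the explicit calibration $g\equiv(0,\dots,0,-1)$ is a nice self-contained check but is not needed.
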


\begin{ex}
Let us consider the plane $\R^2$ endowed with the density $f(x,y)=e^x$ 
introduced in \cite{CJQW}, where it is proved (see Corollary 4.7 therein) that there exist no isoperimetric regions, essentially because constant curvature curves have infinite perimeter. As observed in
Corollary~\ref{corhyperplanes}, hyperplanes $\{y=const\}$ are perimeter--minimising
surfaces. Using Theorem~\ref{calgraphs}, one could check that for any
$a,b\in\R$ the graphs of functions 
$$
\f(x)=
a\pm\int_0^x
\sqrt{\frac{1}{e^{2t}(1+\frac{1}{b^2})-1}}dt,
$$ 
defined on proper half-lines $]c,+\infty[$, are perimeter--minimising
surfaces in $]c,+\infty[\times\R$. 
\end{ex}

\subsection{Piecewise regular densities}
\label{SecPiecConstGen}

In the case of a piecewise regular density, 
variational formulae have to be slightly modified taking into
account the jump set of the density $f$. By piecewise regular
density we mean a function
$$
f(x)= 
\left\{\begin{array}{ll}
f_i(x) & x\in \Omega_i \\
\mbox{ } & \\
\inf \{f_i(x)\} & x\in \partial \Omega_i  
\end{array}
\right.
$$
with $\Omega_1,\ldots,\Omega_k$ disjoint open Lipschitz domains, $f_i\in \ci^1(\overline{\Omega}_i)$ ,
and $f>0$ on $\overline\Omega_i$: we denote by $\Gamma$ the set
$$
\Gamma=\bigcup _{i=1}^k \partial \Omega_i.
$$
Using the notation introduced in (\ref{AppLimit}), for $\Hnmu$-almost every $x\in \Gamma$ we have
$f^\pm_\nu(x)=f_i(x)$ for some $i$, where $\nu=\nu_\Gamma$ is
a unit normal vector to $\Gamma$; as usual, $\nu_\Gamma$ has the 
property that $f^-_{\nu_\Gamma}\leq f^+_{\nu_\Gamma}$. 
For a given 
set $E\subset \Rn$ with finite perimeter,
the functions $\chi_{E,\nu_\Gamma}^\pm$ are the traces of $E$
on the two sides of $\Gamma$. These traces are well defined since the trace
operator $T:BV(\Omega)\to L^1(\partial\Omega)$ is continuous whenever
$\partial \Omega$ is Lipschitz (see for instance \cite[Theorem 3.88]{AmbFusPal00}).

We can then state the following proposition.

\begin{prop}[First variation of volume and perimeter]\label{PropVarVolPerPC}
Let $f$ be a piecewise regular den\-sity and let $E$ be a set of finite perimeter, $\Sigma$ its reduced
boundary. Let us assume that $\{\Phi_t\}_{t\geq 0}:\Rn\to \Rn$ is a smooth one-parameter variation with $\Phi_0=Id$. Set 
$X:=\frac{d}{dt}\Phi{_t}{_{|t=0}}$ and $u:=\scal{X}{\nu_\Sigma}$,
 $\nu_\Sigma$ the inward unit normal vector to ${\cal F}E$; then the following first variation formula 
for the volume holds
\begin{eqnarray}
V^\prime(0) 
&=&\int_E (\scal{\nabla f}{X} + f\diver X) dx \nonumber\\
&=&-\int_{\Sigma\setminus \Gamma} fu d\Hnmu - \int _{\Gamma} 
(f^+_{\nu_\Gamma}\chi^+_{E,\nu_\Gamma}-f^-_{\nu_\Gamma}\chi^-_{E,\nu_\Gamma}) 
\scal{X}{\nu_\Gamma} d\Hnmu.\label{firstvarVolPC}
\end{eqnarray}
Moreover, the function $P(t)=P_f(\Phi_t(E))$
is differentiable at $t=0$ if for $\Hnmu$-a.e. $x\in \Gamma\cap \Sigma$ there exists
some $t_x>0$ such that $\Phi_t(x)\in \Omega_i$ for $0<t<t_x$, where $i$ is such that
$f^-_{\nu_\Gamma}(x)=f_i(x)$; this is the case if, for instance,
\begin{equation}\label{posScal}
\scal{X}{\nu_\Gamma}<0 
\end{equation}
wherever $\nu_\Gamma$ is defined. 
In this case
\begin{equation}\label{firstVarPer}
P^\prime(0)=\int_\Sigma f^-_{\nu_\Sigma} \diver_\Sigma X d\Hnmu + \int_{\Sigma} 
\scal{\nabla f^-_{\nu_\Sigma}}{X} d\Hnmu.
\end{equation}
\end{prop}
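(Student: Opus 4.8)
I would establish the two formulas separately: each reduces to a differentiation under the integral sign followed by a Gauss--Green (respectively, first--variation--of--area) argument, the only genuinely delicate point being the behaviour of $f$ across its jump set $\Gamma$.

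\emph{Volume.} Starting from the change of variables $y=\Phi_t(x)$,
$$
V(t)=|\Phi_t(E)|_f=\int_E f(\Phi_t(x))\,|\det D\Phi_t(x)|\,dx .
$$
For $\Lebn$--a.e. $x\in E$, namely for $x\notin\Gamma$, the point $x$ lies in some open $\Omega_i$, hence $\Phi_t(x)\in\Omega_i$ for small $t$, and $t\mapsto f(\Phi_t(x))$ is smooth near $t=0$ with $\frac d{dt}\big|_{0}f(\Phi_t(x))=\scal{\nabla f(x)}{X(x)}$; together with $\frac d{dt}\big|_{0}|\det D\Phi_t|=\diver X$ and the interchange of derivative and integral (a dominated convergence argument using $f_i\in\ci^1(\overline\Omega_i)$, with attention to the extra surface contribution produced near $\Gamma$, where $f$ jumps) one obtains the first line of \eqref{firstvarVolPC}, with $\nabla f$ read as the \emph{distributional} gradient of the $BV_{\rm loc}$ function $f$ (so that it carries the part $(f^+_{\nu_\Gamma}-f^-_{\nu_\Gamma})\nu_\Gamma\,\Hnmu\res\Gamma$), i.e. $V'(0)=\int_E\diver(fX)$ distributionally. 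To pass to the second line I would apply the Gauss--Green theorem for sets of finite perimeter to each piece $E\cap\Omega_i$ separately---legitimate since $E\cap\Omega_i$ has finite perimeter (intersection of the finite--perimeter sets $E$ and $\Omega_i$) and $f_iX\in\ci^1(\overline\Omega_i;\Rn)$---and sum over $i$. Up to $\Hnmu$--null sets, $\mathcal F(E\cap\Omega_i)$ splits into the interior part $\mathcal FE\cap\Omega_i$, carrying $\nu_E$, and the part lying on $\Gamma$, carrying $\pm\nu_\Gamma$ and weighted by the trace $\chi^\pm_{E,\nu_\Gamma}$ of $E$ from the $\Omega_i$--side; the first kind of term sums to $\int_{\Sigma\setminus\Gamma}\scal{fX}{\nu_E}\,d\Hnmu=-\int_{\Sigma\setminus\Gamma}fu\,d\Hnmu$ (recall $\nu_\Sigma$ is the \emph{inner} normal, so $u=-\scal{X}{\nu_E^{\rm out}}$), and the second kind to the $\Gamma$--integral in \eqref{firstvarVolPC}, the identifications $f^-_{\nu_\Gamma}=f_i$ on the negative side of $\nu_\Gamma$ and $f^+_{\nu_\Gamma}=f_j$ on the positive side being exactly what fixes the signs.

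\emph{Perimeter.} Since $\Phi_t$ is a diffeomorphism, $\mathcal F(\Phi_t(E))=\Phi_t(\Sigma)$ up to $\Hnmu$--null sets, so by the area formula
$$
P(t)=\int_{\Phi_t(\Sigma)}f\,d\Hnmu=\int_\Sigma f(\Phi_t(x))\,J_t(x)\,d\Hnmu(x),
$$
where $J_t$ is the tangential $(N-1)$--Jacobian of $\Phi_t$ along $\Sigma$, with $J_0\equiv1$ and $\frac d{dt}\big|_{0}J_t=\diver_\Sigma X$ (the classical first variation of area). The crucial point is the differentiability at $t=0$ of $t\mapsto f(\Phi_t(x))$ for $\Hnmu$--a.e. $x\in\Sigma$. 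For $x\in\Sigma\setminus\Gamma$ this is immediate, with derivative $\scal{\nabla f(x)}{X(x)}=\scal{\nabla f^-_{\nu_\Sigma}(x)}{X(x)}$ ($f$ being continuous there). For $x\in\Sigma\cap\Gamma$ the stated hypothesis is precisely what is needed: it gives $\Phi_t(x)\in\Omega_i$ with $f_i(x)=f^-_{\nu_\Gamma}(x)$ for $0<t<t_x$, hence $f(\Phi_t(x))=f_i(\Phi_t(x))$ for such $t$ while $f(x)=\inf_l f_l(x)=f^-_{\nu_\Gamma}(x)=f_i(x)$, so $t\mapsto f(\Phi_t(x))$ is right--differentiable at $0$ with derivative $\scal{\nabla f_i(x)}{X(x)}=\scal{\nabla f^-_{\nu_\Sigma}(x)}{X(x)}$ (using $\nu_\Sigma=\pm\nu_\Gamma$ $\Hnmu$--a.e. on $\Sigma\cap\Gamma$ and $f=f^-_{\nu_\Sigma}$ there). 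Differentiating under the integral sign (dominated convergence, using $f_i\in\ci^1(\overline\Omega_i)$ and the local finiteness of $\Hnmu\res\Sigma$) then yields both the differentiability of $P$ and, by the product rule at $t=0$, formula \eqref{firstVarPer}. As for the sufficient condition: if $\scal{X}{\nu_\Gamma}<0$ wherever $\nu_\Gamma$ is defined, then at $\Hnmu$--a.e. $x\in\Gamma$ the domain $\Omega_i$ on the negative side of $\nu_\Gamma(x)$ is locally the subgraph of a Lipschitz function differentiable at $x$, and the strict inequality $\scal{X(x)}{\nu_\Gamma(x)}<0$ says exactly that the curve $t\mapsto\Phi_t(x)=x+tX(x)+o(t)$ enters that subgraph for small $t>0$, i.e. $\Phi_t(x)\in\Omega_i$ for $0<t<t_x$.

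\emph{Main obstacle.} In the volume part it is the interface bookkeeping on $\Gamma$: both the correct meaning of $\int_E\scal{\nabla f}{X}\,dx$ when $f$ jumps (the distributional gradient must be used, so that $\Gamma$ is detected) and the matching of the traces $\chi^\pm_{E,\nu_\Gamma}$ with the one--sided values $f^\pm_{\nu_\Gamma}$ and the correct orientation of $\nu_\Gamma$ when the piecewise Gauss--Green identities are summed; for a completely arbitrary $X$ one must be careful, since an ``upwind'' surface term can otherwise be lost. In the perimeter part the obstacle is exactly the differentiability at $t=0$ of $t\mapsto f(\Phi_t(x))$, which genuinely fails---indeed $P$ need not even be continuous at $0$---when the variation pushes a positive--$\Hnmu$ portion of $\Sigma\cap\Gamma$ towards the higher--density side, which is why the hypothesis on the direction of $\Phi_t$ along $\Gamma\cap\Sigma$ (implied in particular by \eqref{posScal}) cannot be dropped.
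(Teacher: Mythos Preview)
Your proof is correct and follows essentially the same route as the paper: change of variables and differentiation under the integral sign, then a piecewise Gauss--Green on each $E\cap\Omega_i$ for the volume, and the area formula on $\Sigma$ combined with the case analysis $x\in\Sigma\setminus\Gamma$ versus $x\in\Sigma\cap\Gamma$ for the perimeter. One small point: in the first line of \eqref{firstvarVolPC} the paper reads $\nabla f$ as the \emph{pointwise a.e.} gradient (the integral is a Lebesgue integral and $\Gamma$ is Lebesgue--null), not the distributional one; the $\Gamma$--term in the second line emerges only from the piecewise divergence theorem, exactly as you then carry out, so your parenthetical about the distributional gradient is unnecessary but harmless.
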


For the proof of Proposition \ref{PropVarVolPerPC}, we shall make use of the 
following result \cite[Lemma 10.1]{Giu84Min}.

\begin{lemma}\label{lemmaGiusti}
Let $f$ be any measurable lower--semicontinuous density, and let
$F:\Rn\to \Rn$ be a diffeormorphism.
If $E$ and $F(E)$ have finite perimeter, then the following relation holds
\begin{equation}\label{ChangeofVar}
P_f(F(E)) =\int_{{\cal F}E} f(F(x))|H_F(x)\nu_E(x)| d\Hnmu(x),
\end{equation}
where we have defined
$$
H_F(x)=|{\rm det}DF| DF^{-1}(F(x)).
$$
\end{lemma}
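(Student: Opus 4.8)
The statement to prove is Lemma~\ref{lemmaGiusti} (Giusti's change-of-variables formula for weighted perimeter under a diffeomorphism). Let me sketch a proof plan.

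The plan is to reduce to the unweighted case by approximation and then invoke the classical change-of-variables formula for the (Euclidean) perimeter measure under a diffeomorphism, which is exactly \cite[Lemma 10.1]{Giu84Min}. First I would recall that for a set $E$ of finite Euclidean perimeter and a diffeomorphism $F$, one has the vector-measure identity $D\chi_{F(E)} = (F)_\# \big( H_F \, D\chi_E \big)$ in a suitable sense; more precisely, for the perimeter measures, $P(F(E),A) = \int_{F^{-1}(A)\cap \mathcal{F}E} |H_F(x)\nu_E(x)|\,d\Hnmu(x)$ for Borel sets $A$, where $H_F(x) = |\det DF(x)|\, DF^{-1}(F(x))$ is the cofactor matrix. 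This is the classical statement and is what I would cite. Equivalently, the reduced boundary satisfies $\mathcal{F}(F(E)) = F(\mathcal{F}E)$ up to $\Hnmu$-null sets, with normal $\nu_{F(E)}(F(x)) = H_F(x)\nu_E(x)/|H_F(x)\nu_E(x)|$ and the surface measure transforming by the Jacobian factor $|H_F(x)\nu_E(x)|$.

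Next I would write the weighted perimeter using the representation formula established earlier in this section: $P_f(F(E)) = \int_{\mathcal{F}(F(E))\cap\Omega} f(y)\,d\Hnmu(y)$, valid because $f$ is lower-semicontinuous and we may work on the exhausting open sets $\Omega_k=\{1/k<f<k\}$ where finite-perimeter sets coincide with finite-Euclidean-perimeter ones. On each $\Omega_k$ I apply the classical change of variables: pushing the integral over $\mathcal{F}(F(E))$ back to an integral over $\mathcal{F}E$ via $y=F(x)$, the density $f(y)$ becomes $f(F(x))$ and the surface-measure element $d\Hnmu(y)$ becomes $|H_F(x)\nu_E(x)|\,d\Hnmu(x)$, exactly producing the integrand $f(F(x))|H_F(x)\nu_E(x)|$. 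Letting $k\to\infty$ and using monotone convergence (together with $\Omega = \bigcup_k\Omega_k$) upgrades the identity to all of $\Omega$, which gives~\eqref{ChangeofVar}. The lower-semicontinuity of $f$ is what guarantees that no mass of the perimeter is lost on the jump set of $f$ or on the boundary of $\Omega$, so the representation formula holds with equality rather than merely as an inequality.

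The main obstacle is the measure-theoretic bookkeeping for a merely lower-semicontinuous, possibly discontinuous $f$: one must be careful that the classical change-of-variables formula for Euclidean perimeter (which is purely about the geometry of $\mathcal{F}E$ and $F$) can be combined with a weight that need not be continuous. The key point is that the formula $\nu_{F(E)}\circ F = H_F\nu_E/|H_F\nu_E|$ and $F_\#(|H_F\nu_E|\Hnmu\res\mathcal{F}E) = \Hnmu\res\mathcal{F}(F(E))$ are $\Hnmu$-a.e. statements that do not see the weight at all, so one simply integrates the Borel function $f$ against both sides. Since $F$ is a bi-Lipschitz (indeed $\ci^1$-diffeomorphism) map, it sends $\Hnmu$-null sets to $\Hnmu$-null sets, so the a.e. identifications are preserved and Tonelli's theorem justifies the substitution for the nonnegative integrand $f\circ F \cdot |H_F\nu_E|$. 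Because we merely quote Giusti's lemma for the underlying Euclidean identity, the remaining work is genuinely routine, and the argument above is essentially a one-line reduction; accordingly the paper states it as a cited lemma rather than reproving it.
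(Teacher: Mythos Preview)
Your proposal is correct and matches the paper's treatment: the paper does not give a proof of this lemma at all but simply cites it as \cite[Lemma 10.1]{Giu84Min}, and your sketch correctly explains why the weighted formula~\eqref{ChangeofVar} is an immediate consequence of Giusti's unweighted change-of-variables identity for the perimeter measure, obtained by integrating the Borel weight $f$ against the pushforward $F_\#(|H_F\nu_E|\,\Hnmu\res\mathcal{F}E)=\Hnmu\res\mathcal{F}(F(E))$. Your additional care with the exhaustion $\Omega_k$ and monotone convergence is more than the paper itself provides.
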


We can then prove Proposition \ref{PropVarVolPerPC}.

\begin{proof}
The proof of this fact is essentially the same contained in \cite{BayCanMorRos07OnT}; in fact, we have that
$$
V(t)=\int_{\Phi_t(E)} f(y)dy =\int_E f(\Phi_t(x))|{\rm det}D\Phi_t(x)|dx.
$$ 
Then, using the almost everywhere differentiability of $f$,
\begin{align*}
V^\prime(0)=&\int_E \left(
\scal{\nabla f(x)}{X(x)} +f(x)\frac{d}{dt}|{\rm det}D\Phi_t(x)|_{t=0}\right)dx \\
=&
\sum_{i=1}^k \int_{E\cap \Omega_i} \left( 
\scal{\nabla f_i(x)}{X(x)}+f_i(x)\diver X(x)\right)dx. 
\end{align*}
Notice that $E\cap \Omega_i$ has finite perimeter and 
${\cal F}(E\cap \Omega_i)$ is made by two pieces: ${\cal F}E\cap \Omega_i$ and the
part of $\partial \Omega_i$ where $E$ has trace $1$, that is $\chi_{E,\nu_{\Omega_i}}^+=1$ (here $\nu_{\Omega_i}$ 
is the inward unit normal to $\Omega_i$). Therefore, the divergence theorem gives
\begin{align*}
\int_{E\cap \Omega_i} \scal{\nabla f_i(x)}{X(x)}dx
=&\int_{{\cal F}E\cap \Omega_i} f_i\scal{X}{\nu_E} \,d\Hnmu\\
&+\int_{\partial \Omega_i} f_i
\chi_{E,\nu_{\Omega_i}}^+ \scal{X}{\nu_{\Omega_i}}\,d\Hnmu 
-\int_{E\cap \Omega_i} f_i\diver X dx.
\end{align*}
Summing up on $i=1,\ldots,k$, formula (\ref{firstvarVolPC}) follows.

In order to prove (\ref{firstVarPer}), we use (\ref{ChangeofVar}) with $F(x)=\Phi_t(x)$
and $H_t=H_{\Phi_t}$; we then have
$$
P_f(\Phi_t(E)) =\int_{{\cal F}E} f(\Phi_t(x)) |H_t(x)\nu_E(x)|d\Hnmu
$$
and then
\begin{align*}
\frac{d}{dt}P_f(\Phi_t(E)) =&\int_{{\cal F}E} \frac{d}{dt}
\left(f(\Phi_t(x))\right) |H_t(x)\nu_E(x)|d\Hnmu \\
&+
\int_{{\cal F}E} f(\Phi_t(x)) \frac{d}{dt}|H_t(x)\nu_E(x)|d\Hnmu.
\end{align*}
For the second term on the right hand side, we have as in the standard case that
$$
\frac{d}{dt}|H_t(x)\nu_E(x)|_{|t=0} =\diver_\Sigma X(x)
$$
with $\diver_\Sigma$ the tangential divergence; since $|H_0(x)\nu_E(x)|=1$, we have only to compute
$$
\frac{d}{dt}f(\Phi_t(x))_{|t=0}.
$$
We analyse separately the two cases: $x\in \Omega_i$ for some $i$, and 
$x\in \Gamma$. 
For $x\in \Omega_i$, we have that $\Phi_t(x)\in \Omega_i$ for
small $t$, and then
$$
\frac{d}{dt}f(\Phi_t(x))=\scal{\nabla f_i(\Phi_t(x))}{\tfrac{d}{dt}\Phi_t(x)},
$$ 
whence
$$
\frac{d}{dt}f(\Phi_t(x))_{|t=0}=\scal{\nabla f_i(x)}{X(x)}.
$$
For $x\in \Gamma$, we have that, since $f$ is lower--semicontinuous, 
$f(x)=f^-_{\nu_\Gamma}(x)$ (with $\nu_\Gamma$ unit normal vector of $\Gamma$); 
since we are assuming (\ref{posScal}),
\begin{align*}
\frac{d}{dt}f(\Phi_t(x))_{|t=0} =&\lim_{h\to 0} \frac{f(\Phi_h(x))-f^-_{\nu_\Gamma}(x)}{h}
=\lim_{h\to 0} \frac{f^-_{\nu_\Gamma}(\Phi_h(x))-f^-_{\nu_\Gamma}(x)}{h} \\
=&\scal{\nabla f^-_{\nu_\Gamma}(x)}{X(x)}
\end{align*}
and this completes the proof.
\end{proof}

Having given first variation formulae, we discuss the stability of isoperimetric sets.
In a discontinuous density setting the latter has to be understood as follows: if the variation $\Phi_t$ is volume 
preserving, $V(t)=V(0)$ for all $t>0$, then perimeter has to increase, i.e. 
$P(0)\leq P(t)$. This means that 
if $P$ is differentiable at $0$ then $P'(0)\geq 0$. This condition can be
strengthened to $P'(0)=0$ when ${\rm spt} X\cap \Gamma=\emptyset$: in fact, in this case we have differentiability of $P(t)$ also when considering the vector field $-X$ instead of $X$. As a consequence we will obtain 
necessary conditions on the isoperimetric set at points far from $\Gamma$ and at points on $\Gamma$, as summarised in the 
following Propositions \ref{teofresnel} and \ref{CorTangential}.

If we assume some regularity on the isoperimetric set, then the first variation of the perimeter can
be rewritten in a different way. Far from $\Gamma$, standard regularity holds \cite{Mor03Reg}. 
The additional regularity we have to require is the $\Hnmu$--almost everywhere
$\ci^2$ regularity of $\Gamma$ and the regularity of traces 
of ${\cal F}E\cap \Omega_i$ on $\Gamma$, allowing an integration by parts in (\ref{firstVarPer})
in order to eliminate the term $\diver_\Sigma X$. This operation can be done if we assume the following:
\begin{description}
\item[(r1)] for every $i=1,\ldots,k$ the part of the boundary
$\Sigma_i={\cal F}E\cap \Omega_i$ is an $(N-1)$-rectifiable set with an $(N-2)$-rectifiable boundary 
$S_i=\partial \Sigma_i$ of finite $\Haus^{N-2}$ measure,
i.e. $\Sigma_i$ is a normal $(N-1)$--current: for $x\in S_i$, we denote
by $\nu_{S_i}(x)$ the unit normal vector orthogonal to $S_i$, contained in $T_x\Sigma_i$ and with $\nu_{S_i}$
pointing outside $\Sigma_i$;
\item[(r2)] $\Sigma_\Gamma={\cal F}E\cap \Gamma$, which is also an $(N-1)$--rectifiable 
set, has $(N-2)$--rectifiable boundary $\sigma=\partial \Sigma_\Gamma$ with
finite $\Haus^{N-2}$ measure, i.e. $\Sigma_\Gamma$ is a normal 
$(N-1)$--current: we denote by $\nu_\sigma$ the unit normal vector orthogonal to $\sigma$, contained
in $T\Sigma_\Gamma$ and pointing outside $\Sigma_\Gamma$.
\end{description}
In this case the first variation of the perimeter can be rewritten as
\begin{align}
\nonumber
P^\prime(0)
=&\int_{\Sigma}\scal{\nabla\psi^-}{\nu_\Sigma} uf^-d\Hnmu -(N-1) \int_{\Sigma} H_\Sigma f^- u d\Hnmu \\ 
&+\int_\sigma f^- \scal{X}{\nu_\sigma} d\Haus^{N-2} +\sum_{i=1}^k \int_{S_i}f_i 
\scal{X}{\nu_{S_i}}d\Haus^{N-2}.
\label{reg_stationary}
\end{align}
For the densities we shall consider in Section \ref{SecPCDens}, 
{\bf (r1)} and {\bf (r2)} are satisfied, so we shall not enter into further details and
simply say, if they hold true, that $E$ has {\em regular trace} on $\Gamma$.

We now give some necessary condition for the isoperimetric set; first of all, as
already mentioned in Remark \ref{rmkMainProperties}, the generalised mean curvature 
\begin{equation}\label{teoConstCurv}
H_\psi(x) =(N-1)H_\Sigma(x)-\scal{\nabla \psi(x)}{\nu_\Sigma(x)}
\end{equation}
is equal to a constant $H_0$ for any $x\in \Sigma\setminus \Gamma$.

Moreover, we have the next result, giving a sort of counterpart of the celebrated Snell refraction law of optics. 
It follows from a well--known fact, that the tangent cone to
an isoperimetric set must be perimeter--minimising without volume
constraint (see for instance \cite[Section 9]{Mor08Geo}). Anyway, we give the proof for reader's convenience.

\begin{prop}\label{teofresnel}
In the same notation as Proposition \ref{PropVarVolPerPC}, let 
$E$ be an isoperimetric set with regular traces on $\Gamma$; assume that
$\Sigma$ passes through $\Gamma$ transversally at a point
$x\in S_i\cap S_j$ where $S_i,S_j$ and $\Gamma$ have tangent
spaces, i.e. the vectors $\nu_{S_i}$, $\nu_{S_j}$ and $\nu_\Gamma$
 exist. Then the vector $f_i(x)\nu_{S_i}(x)+f_j(x)\nu_{S_j}(x)$ is
 parallel to $\nu_\Gamma(x)$, that is 
$$
f_i(x)\scal{\nu_{S_i}(x)}{\nu}=-f_j(x)\scal{\nu_{S_j}(x)}{\nu}
$$
for any $\scal{\nu}{\nu_\Gamma}=0$.
In particular the Snell refraction law
\begin{equation}\label{eqfresnel}
\frac{\cos\al_+}{\cos\al_-}=\frac{f^-_{\nu_\Gamma}(x)}{f^+_{\nu_\Gamma}(x)}
\end{equation}
holds. Here, $\al_+$ and $\al_-$ are, respectively, the two angles at $x$
between $\Gamma$ and $\Sigma$ from the two opposite sides of $\Gamma$ (see Figure
\ref{fresnel}). 
\end{prop}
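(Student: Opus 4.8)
The plan is to reduce the statement to a one–line first variation computation performed on the tangent cone of $E$ at $x$, and then to read off the optical ratio from the resulting vector identity.

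First I would blow $E$ up at $x$. By the fact recalled above — tangent cones to isoperimetric sets are perimeter--minimising without volume constraint, \cite[Section 9]{Mor08Geo}, an argument that extends to the density setting via Lemma~\ref{lemmaGiusti} and the compactness/lower--semicontinuity used in the proof of Proposition~\ref{PropVarVolPerPC} — the rescalings $(E-x)/\vrho$ converge as $\vrho\to0$ to a set $C$ that is perimeter--minimising, with no volume constraint, in $\Rn$ equipped with the blown--up density. Thanks to the regularity assumed at $x$, this limiting density is the piecewise constant one taking the value $f_i(x)$ on one of the two half--spaces bounded by $\Pi:=T_x\Gamma$ and $f_j(x)$ on the other; moreover ${\cal F}C$ is the union of the two half--hyperplanes $\Sigma_i^\infty:=T_x\Sigma_i$ and $\Sigma_j^\infty:=T_x\Sigma_j$, which cross $\Pi$ transversally and meet along the $(N-2)$--plane $L:=T_x(S_i\cap S_j)\subset\Pi$, the conormals $\nu_{S_i},\nu_{S_j}$ are constant along $L$, and ${\cal F}C\cap\Pi$ is $\Hnmu$--negligible, so that in the notation of Proposition~\ref{PropVarVolPerPC} the curve $\sigma$ is empty for $C$.

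Next I would apply the first variation formula \eqref{reg_stationary} to $C$ with a carefully chosen variation. Fix a vector $e$ with $\scal{e}{\nu_\Gamma(x)}=0$, a nonnegative cut--off $\eta\in\ci^\infty_c(\Rn)$, and let $\Phi_t$ be the flow of $X:=\eta\,e$. Since $X$ is orthogonal to $\nu_\Gamma(x)$ everywhere it is tangent to $\Pi$ on $\Pi$, and the computation in the proof of Proposition~\ref{PropVarVolPerPC} still yields differentiability of $t\mapsto P_f(\Phi_t(C))$ at $t=0$ — for $X$ \emph{and} for $-X$ — because for $y\in\Pi$ one still has $f(\Phi_t(y))=f^-_{\nu_\Gamma}(\Phi_t(y))$ by lower--semicontinuity. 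On $C$ the density is locally constant off $\Pi$, so $\nabla\psi^-\equiv0$ on ${\cal F}C\setminus\Pi$; the faces of ${\cal F}C$ are flat, so $H_\Sigma\equiv0$; and $\sigma=\emptyset$. Hence \eqref{reg_stationary} collapses to
$$
P'(0)=\int_L \big(f_i(x)\,\nu_{S_i}(x)+f_j(x)\,\nu_{S_j}(x)\big)\cdot e\;\eta\,d\Haus^{N-2}.
$$
Minimality of $C$ without volume constraint forces $P'(0)=0$; since $\eta\ge0$ is arbitrary and $\nu_{S_i}(x),\nu_{S_j}(x)$ are constant along $L$, this gives $\big(f_i(x)\nu_{S_i}(x)+f_j(x)\nu_{S_j}(x)\big)\cdot e=0$, and letting $e$ run through $\nu_\Gamma(x)^\perp$ shows that $f_i(x)\nu_{S_i}(x)+f_j(x)\nu_{S_j}(x)$ is parallel to $\nu_\Gamma(x)$ — equivalently $f_i(x)\scal{\nu_{S_i}(x)}{\nu}=-f_j(x)\scal{\nu_{S_j}(x)}{\nu}$ whenever $\scal{\nu}{\nu_\Gamma}=0$.

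Finally I would extract \eqref{eqfresnel}. Work in the $2$--plane through $x$ spanned by $\nu_\Gamma(x)$ and the unit vector $\tau\in T_x\Gamma$ orthogonal to $L$; the half--hyperplanes $\Sigma_i^\infty,\Sigma_j^\infty$ cut out two half--lines there and $\nu_{S_i},\nu_{S_j}$ lie in this plane. Since the dihedral angle between a hyperplane through $L$ and $\Pi$ equals the angle between the corresponding unit conormals to $L$ — namely $\nu_{S_\bullet}$ and $\tau$ — one has $|\scal{\nu_{S_i}(x)}{\tau}|=\cos\al_i$ and $|\scal{\nu_{S_j}(x)}{\tau}|=\cos\al_j$, with opposite signs because the two conormals point away from $\Sigma_i,\Sigma_j$ on opposite sides of $L$. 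Substituting into $f_i(x)\scal{\nu_{S_i}(x)}{\tau}=-f_j(x)\scal{\nu_{S_j}(x)}{\tau}$ gives $f_i(x)\cos\al_i=f_j(x)\cos\al_j$; recalling that $\nu_\Gamma$ points towards the denser side, so that $(f_i(x),f_j(x))$ matches $(f^+_{\nu_\Gamma}(x),f^-_{\nu_\Gamma}(x))$ up to ordering, this is exactly \eqref{eqfresnel}. I expect the main obstacle to be not a single estimate but the two bookkeeping points: (a) checking that the blow--up really yields the flat, two--faced configuration described and that the ``tangent cone is minimising'' principle survives a discontinuous density, and (b) the sign/orientation chasing in this last step — both routine but needing care. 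As an alternative to (a) one can argue directly on $E$ with volume--preserving variations tangent to $\Gamma$ supported near $x$: in \eqref{reg_stationary} the constancy of the generalised mean curvature \eqref{teoConstCurv} off $\Gamma$ makes all bulk terms cancel against $H_0V'(0)$, leaving only the edge integrals over $S_i,S_j$ near $x$ (the $\sigma$--term vanishing by transversality), and a localisation at $x$ gives the same identity.
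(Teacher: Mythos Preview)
Your argument is correct and, in fact, follows precisely the route the paper itself points to just before stating the proposition: the paper remarks that the result ``follows from a well--known fact, that the tangent cone to an isoperimetric set must be perimeter--minimising without volume constraint'', cites \cite[Section 9]{Mor08Geo}, and then adds ``Anyway, we give the proof for reader's convenience.'' You have made this tangent--cone argument explicit, whereas the paper chooses a direct, self--contained alternative.

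The paper's proof avoids the blow--up entirely. It works on $E$ itself with a two--point variation: a bump $\varrho_{h,1}^r\nu_1$ concentrated near $x$ plus a compensating bump $c\,\varrho_{h,2}^r\nu_2$ at a smooth point $x_2\in\Sigma\setminus\Gamma$, the constant $c$ chosen to make $V'(0)=0$. Because only directions $\nu_1$ with $\scal{\nu_1}{\nu_\Gamma}<0$ are admissible in Proposition~\ref{PropVarVolPerPC}, the paper obtains the one--sided inequality $\scal{\nu_1}{f_i\nu_{S_i}+f_j\nu_{S_j}}\geq 0$ for all such $\nu_1$ after localising via $h\to\infty$ and $r\to0$; this half--space of admissible $\nu_1$ already forces $f_i\nu_{S_i}+f_j\nu_{S_j}$ to be a nonnegative multiple of $\nu_\Gamma$. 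Your approach instead exploits that tangential variations ($e\perp\nu_\Gamma$) are admissible for both signs, getting $P'(0)=0$ directly on the flat cone and hence the orthogonality at once. What the paper's route buys is that it never needs to justify tangent--cone existence and minimality for a discontinuous density --- the point you flagged as bookkeeping item (a) --- at the cost of the extra volume--balancing point and a slightly less transparent limit. Your closing alternative (volume--preserving tangential variations directly on $E$, with bulk terms cancelling against $H_0V'(0)$) is essentially a hybrid of the two and would also work.
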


\begin{figure}[h!tbp]
\begin{center} 
\scalebox{1}{ 
\input{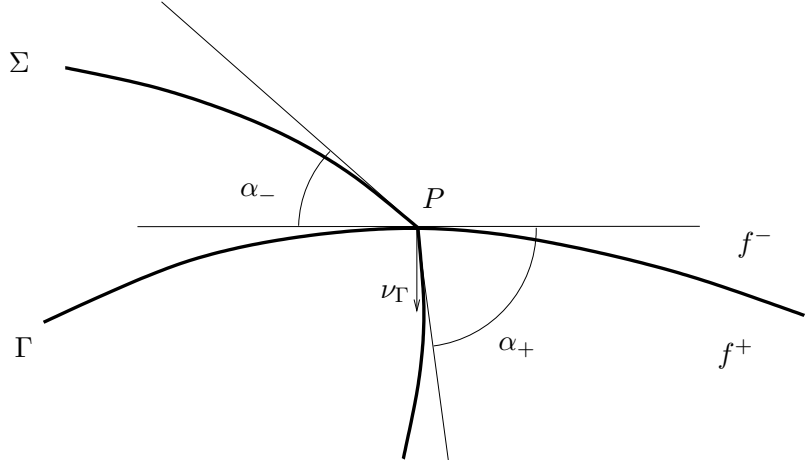} 
} 
\end{center}
\caption{Snell refraction law.}
\label{fresnel} 
\end{figure} 

\begin{proof}
First of all, we notice that $\Sigma\setminus \Gamma\neq \emptyset$, since
otherwise $\Sigma_i=\emptyset$ for any $i$ and then $S_i=\emptyset$.
Let $x_1:=x$ be as in the statement; consider
a second point $x_2\in \Sigma\setminus \Gamma$ and a sequence of
 functions $\varrho^r_{h,i}$, $i=1,2$ such that
$$
\varrho^r_{h,i}(y)\to \frac{1}{\omega_{N-1}r^{N-1}} \chi_{B(x_i,r)}(y),
 \quad i=1,2 
$$
and then consider the vector field
$$
X_h^r(y)= \varrho_{h,1}^r(y)\nu_1+c \varrho_{h,2}^r(y)\nu_2
$$ 
for some unit vectors $\nu_1$ and $\nu_2$ to be chosen in a convenient way. 
The constant $c$ has to be determined by requiring that $V'(0)=0$;
using \eqref{reg_stationary} and taking into account the continuity of the functions $f_i$ in $\overline \Omega_i$,
as $h\to+\infty$ we get
\begin{align*}
0\leq&
\frac{c}{\omega_{N-1}r^{N-1}}\int_{\Sigma\cap B(x_2,r)}
 \scal{\nabla\psi^-}{\nu_\Sigma}\scal{\nu_2}{\nu_\Sigma}f^-d\Hnmu \\
&+\frac{1}{\omega_{N-1}r^{N-1}}\int_{S_i\cap B(x_1,r)}f_i
 \scal{\nu_1}{\nu_{S_i}}d\Haus^{N-2} +\\
&+\frac{1}{\omega_{N-1}r^{N-1}}\int_{S_j\cap B(x_1,r)}f_j \scal{\nu_1}{\nu_{S_j}}d\Haus^{N-2}.
\end{align*}
Multiplying the previous equation by $r$ and taking the limit as $r\to
 0$, we obtain 
\begin{equation}\label{blowTang}
\scal{\nu_1}{f_i(x_1)\nu_{S_i}(x_1)+f_j(x_1)\nu_{S_j}(x_1)}=
f_i(x_1)\scal{\nu_1}{\nu_{S_i}(x_1)}+f_j(x_1)\scal{\nu_1}{\nu_{S_j}(x_1)}\geq 0.
\end{equation}
We are assuming that $\Gamma$ has a tangent hyperplane at $x_1$, and then there exists a
normal vector $\nu_\Gamma(x_1)$ with the usual agreement that
$f^-_{\nu_\Gamma(x_1)}=\min\{f_i(x_1),f_j(x_1)\}$ and $f^+_{\nu_\Gamma(x_1)}=\max\{f_i(x_1),f_j(x_1)\}$.
Recalling Proposition \ref{PropVarVolPerPC}, we have an admissible variation for any $\nu_1$ with the condition
$\scal{\nu_1}{\nu_\Gamma(x_1)}<0$. The validity of (\ref{blowTang}) for any such $\nu_1$ implies then
that the vector $f_i(x_1)\nu_{S_i}(x_1)+f_j(x_1)\nu_{S_j}(x_1)$
is a positive multiple of $\nu_\Gamma$ and 
$$
\scal{\nu_1}{f_i(x_1)\nu_{S_i}(x_1)+f_j(x_1)\nu_{S_j}(x_1)}=0, 
$$
for each $\scal{\nu_1}{\nu_\Gamma}=0$; 
this is the Snell law, since by transversality of $\Sigma$ at $x$, we can take 
$\nu_1$ as the unique vector (up to a sign) orthogonal to $\nu_\Gamma$ and to $T_{x_1}(S_i\cap S_j)$.
\end{proof}

Proposition \ref{teofresnel} will be of crucial importance
in Section \ref{SecPCDens}, where we will consider piecewise constant densities. Intuitively, Proposition  \ref{teofresnel} says that, when $\partial E$ crosses $\Gamma$ transversally, a corner is formed according to Snell law. Notice also that the angle $\al_+$ must be greater than $\arccos\big( f^-_{\nu_\Gamma}(x)/f^+_{\nu_\Gamma}(x)\big)$ and less than the supplementary of this angle. We will sometimes use \eqref{eqfresnel} in an equivalent form, where the cosines of the angles $\al_+,\al_-$ are substituted by the sines of the complementary angles, i.e. the ones formed by the normals to $\Gamma$ and $\Sigma$.

When the crossing between $\Sigma$ and $\Gamma$ is not transversal and $\Sigma$ touches $\Gamma$ in
the region $\sigma$ defined in {\bf (r2)}, we have the following result.

\begin{prop}\label{CorTangential}
Let $E$ be an isoperimetric set with regular trace on $\Gamma$; assume that for 
$x\in \sigma\cap S_i$, $\nu_\sigma$, $\nu_{S_i}$ and $\nu_\Gamma$ exist, i.e. 
$\sigma, S_i$ and $\Gamma$ have tangent planes. If 
$f_i(x)>f^-_{\nu_\Gamma}(x)$, then
\begin{equation}\label{SnellTan}
\scal{\nu_\sigma(x)}{\nu_{S_i}(x)}\geq-\frac{f^-_{\nu_\Gamma}(x)}{f_i(x)},
\end{equation}
that is, $\Sigma_i=\Sigma\cap \Omega_i$ meets $\Gamma$ at an angle at most 
$\arccos\Big(\frac{f^-_{\nu_\Gamma}(x)}{f_i(x)}\Big)$; 
if $f_i(x)=f^-_{\nu_\Gamma}(x)$, then $\nu_\sigma(x)=-\nu_{S_i}(x)$, that
is, $\Sigma_i$ is tangential to $\Gamma$.
\end{prop}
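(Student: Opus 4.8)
The plan is to carry over to the non--transversal case the blow--up argument from the proof of Proposition~\ref{teofresnel}, the essential new point being that along $\Gamma$ only \emph{one--sided} competitors are admissible, so that the outcome is an inequality rather than an identity. First, $\Sigma\setminus\Gamma\neq\emptyset$ because $x\in S_i=\partial\Sigma_i$ forces $\Sigma_i=\Sigma\cap\Omega_i\neq\emptyset$; fix a regular point $x_2\in\Sigma\setminus\Gamma$, put $\nu_2=\nu_\Sigma(x_2)$ and $x_1:=x$, and for a unit vector $\nu_1$ with $\langle\nu_1,\nu_\Gamma(x)\rangle<0$ and functions $\varrho^r_{h,i}\to\frac{1}{\omega_{N-1}r^{N-1}}\chi_{B(x_i,r)}$ (as $h\to+\infty$) consider the vector fields $X_h^r(y)=\varrho^r_{h,1}(y)\nu_1+c\,\varrho^r_{h,2}(y)\nu_2$, where $c=c(r)$ is chosen so that $V'(0)=0$ in \eqref{firstvarVolPC}; since $x_2\notin\Gamma$ and $\nu_2$ is the unit normal there, $c$ stays bounded as $r\to0$.

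These variations are admissible in the sense of Proposition~\ref{PropVarVolPerPC}: at $\Hnmu$--a.e.\ point of $\Gamma\cap\Sigma=\Sigma_\Gamma$ the field $X_h^r$ either vanishes (away from $x_1$) or, near $x_1$, has negative scalar product with $\nu_\Gamma$, so that $\Phi_t$ moves those points to the side of $\Gamma$ on which $f$ attains the smaller value $f^-_{\nu_\Gamma}$; hence $P(t)=P_f(\Phi_t(E))$ is differentiable at $0$ and \eqref{reg_stationary} applies. Since $E$ is isoperimetric and the variation is volume--preserving, $P(t)\geq P(0)$ for small $t>0$, so $P'(0)\geq0$; we \emph{cannot} run the same argument with $-X_h^r$, as $\langle-\nu_1,\nu_\Gamma\rangle>0$ is not admissible, and this is exactly the source of the one--sidedness. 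Letting $h\to+\infty$ in \eqref{reg_stationary}, then multiplying by $r$ and letting $r\to0$, the two bulk integrals over $\Sigma$ remain bounded and therefore drop out after the multiplication by $r$, while the contact integrals over $\sigma$ and over $S_i$ concentrate at $x$ (one uses that $\sigma$, $S_i$ and $\Gamma$ have tangent planes at $x$, that $\sigma=S_i$ near $x$, and that for small $r$ neither $x_2$ nor the other $S_j$'s meet $B(x,r)$). What survives is
$$
f^-_{\nu_\Gamma}(x)\langle\nu_1,\nu_\sigma(x)\rangle+f_i(x)\langle\nu_1,\nu_{S_i}(x)\rangle\geq 0
$$
for every $\nu_1$ with $\langle\nu_1,\nu_\Gamma(x)\rangle<0$, hence, by continuity, for every $\nu_1$ with $\langle\nu_1,\nu_\Gamma(x)\rangle\leq0$.

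It remains to extract the geometric statement. Set $w:=f^-_{\nu_\Gamma}(x)\nu_\sigma(x)+f_i(x)\nu_{S_i}(x)$. Testing $\langle\nu_1,w\rangle\geq0$ with $\nu_1$ and $-\nu_1$ for each $\nu_1\perp\nu_\Gamma(x)$ shows that $w\perp T_x\Gamma$, i.e.\ $w$ is parallel to $\nu_\Gamma(x)$, and testing with $\nu_1=-\nu_\Gamma(x)$ gives $w=-c_0\,\nu_\Gamma(x)$ with $c_0\geq0$. Because $\langle\nu_\sigma(x),\nu_\Gamma(x)\rangle=0$, taking the scalar product of this identity with the unit vector $\nu_\sigma(x)$ yields $f^-_{\nu_\Gamma}(x)+f_i(x)\langle\nu_\sigma(x),\nu_{S_i}(x)\rangle=0$, that is
$$
\langle\nu_\sigma(x),\nu_{S_i}(x)\rangle=-\frac{f^-_{\nu_\Gamma}(x)}{f_i(x)},
$$
which is \eqref{SnellTan} (in fact with equality). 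When $f_i(x)=f^-_{\nu_\Gamma}(x)$ this becomes $\langle\nu_\sigma(x),\nu_{S_i}(x)\rangle=-1$, so by the equality case of the Cauchy--Schwarz inequality $\nu_\sigma(x)=-\nu_{S_i}(x)$, i.e.\ $\Sigma_i$ is tangential to $\Gamma$ at $x$. I expect the main obstacle to be, as in Proposition~\ref{teofresnel}, the rigorous justification that the one--sided variations are admissible and that the blow--up isolates precisely the $(N-2)$--dimensional contact terms, together with the use of the regular--trace hypotheses \textbf{(r1)}--\textbf{(r2)} to ensure that near $x$ the reduced boundary consists only of the two pieces $\Sigma_\Gamma$ and $\Sigma_i$, joined along $\sigma=S_i$.
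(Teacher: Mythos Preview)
Your argument is correct and follows the same blow--up strategy as the paper: localize a volume--preserving variation at $x_1=x$ and at a regular point $x_2\in\Sigma\setminus\Gamma$, let $h\to\infty$ and $r\to0$, and arrive at the key one--sided inequality
\[
f^-_{\nu_\Gamma}(x)\,\langle\nu_1,\nu_\sigma(x)\rangle+f_i(x)\,\langle\nu_1,\nu_{S_i}(x)\rangle\geq 0
\]
for admissible $\nu_1$. The paper then simply plugs in a single test direction (it writes $\nu_1=\nu_{S_i}$, though $\nu_1=\nu_\sigma$ is what actually yields \eqref{SnellTan}) to obtain the stated inequality, and treats the case $f_i=f^-$ by a separate substitution. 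Your extraction is different and in fact sharper: by testing with $\pm\nu_1$ for every $\nu_1\perp\nu_\Gamma$ (both lying on the boundary of the admissible half--space, hence usable by continuity) you conclude that $w=f^-\nu_\sigma+f_i\nu_{S_i}$ is a nonpositive multiple of $\nu_\Gamma$, and projecting onto $\nu_\sigma\in T_x\Gamma$ then forces the \emph{equality} $\langle\nu_\sigma,\nu_{S_i}\rangle=-f^-_{\nu_\Gamma}(x)/f_i(x)$. This unifies the two cases of the proposition and recovers the tangential conclusion $\nu_\sigma=-\nu_{S_i}$ when $f_i=f^-$ as the special case of Cauchy--Schwarz equality.
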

 
\begin{proof}
The proof is similar as for Proposition \ref{teofresnel}; setting again $x_1=x$ and considering
$$
X^r_h(x)=\varrho^r_{h,1}(x)\nu_1 +c\varrho^r_{h,2}(x)\nu_2
$$
with $\varrho^r_{h,i}$, $i=1,2$, as before, by taking the limits as $h\to +\infty$
and $r\to 0$ one gets
\begin{equation}\label{nunu}
f^-_{\nu_\Gamma}(x_1)\scal{\nu_1}{\nu_{\sigma}(x_1)}+f_i(x_1)\scal{\nu_1}{\nu_{S_i}(x_1)}\geq
 0.
\end{equation}
This inequality holds for any $\nu_1$ with $\scal{\nu_1}{\nu_\Gamma}>0$; if $f_i(x_1)>f^-_{\nu_\Gamma}(x_1)$,
then by taking $\nu_1=\nu_{S_i}(x_1)$ we obtain (\ref{SnellTan}). Otherwise, if
$f_i(x_1)=f^-_{\nu_\Gamma}(x_1)$, then (\ref{nunu}) reduces to
$$
\scal{\nu_1}{\nu_{\sigma}(x_1)+\nu_{S_i}(x_1)}\geq 0
$$
for any $\scal{\nu_1}{\nu_\Gamma}<0$. Since $\scal{\nu_\sigma}{\nu_\Gamma}=0$, 
the only possibility is that $\nu_{\sigma}(x_1)+\nu_{S_i}(x_1)=0$.
\end{proof}

\begin{rmk}
The previous proofs can be summarised and generalised as follows. 
Let us assume that the surface $\Gamma$ admits a unique tangent cone at the point
 $x_0$ and let us
denote by $O_i$ the blow-up at $x_0$ of the region
$\Omega_i$. If the isoperimetric set passes through $\Gamma$ at $x_0$,
 then there holds
$$
\scal{f_i\nu_{S_i}+f_j\nu_{S_j}}{\nu}\geq 0
$$
for any direction $\nu$ contained in $O_i$ where $f_i$ is minimal among
the $f_j$'s such that $x_0\in \overline \Omega_j$. This means that if
$\Gamma$ has a tangent plane at $x_0$, Proposition \ref{teofresnel}
 holds since $\nu$ can range over a half-space;
moreover,  $O_i$ cannot contain more than a half-space, otherwise we
would have
$$  
f_i\nu_{S_i}+f_j\nu_{S_j}=0
$$
which is never possible if $f_i\neq f_j$. Finally, if $O_i$ is less then
a half-space, then the conclusion is that $f_i\nu_{S_i}+f_j\nu_{S_j}$
is contained in a sector.
\end{rmk}

\section{Some piecewise constant densities}
\label{SecPCDens}

In this section we consider some particular piecewise regular densities on Euclidean space, 
mainly piecewise constant densities. 
Section \ref{SecHalfspace} focuses on a density in $\R^N$ taking value one in a half-space, 
and defined in the other half-space by a certain  real function with values greater than one. 
In Section \ref{SecStrip} we study the case of density one in a planar strip and a greater constant outside the strip. Finally, in Section \ref{SecBalls} we deal with a planar density taking value one outside a ball and a different constant value inside.

\subsection{The half-space}
\label{SecHalfspace}
In this section we will focus our attention on the half-space density in $\R^N$ defined by
$$
f(x)=1+(\lambda(x_N)-1)\chi_{\{x_N>0\}}(x)
$$
where $\lambda:\R\to [1,+\infty)$ is a bounded measurable 
function with $\lambda(x_N)>1$ for $x_N>0$. 
Observe that the weighted volume and perimeter coincide with the Euclidean ones in $\{x_N\leq 0\}$. 
We give the following preliminary result.

\begin{lemma}\label{constrLemma}
Let $E$ be a bounded set; then there exists a set $E'$ entirely contained in $\{x_N<0\}$ 
with $|E'|=|E|_f$ and $P(E')\leq P_f(E)$, with equality if and only if almost all of $E$ 
is contained in $\{x_N\leq 0\}$. 
\end{lemma}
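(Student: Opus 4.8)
The idea is to replace $E$ by a set obtained by ``folding'' the part of $E$ lying in the upper half-space down into the lower half-space, and then to use a rearrangement in each vertical line to concentrate the resulting slices near the hyperplane $\{x_N=0\}$. First I would introduce the vertical slices $E_y=\{t\in\R:(y,t)\in E\}$ for $y\in\R^{N-1}$, and observe that, since $\lambda\geq 1$ with $\lambda>1$ on $\{x_N>0\}$, each unit of $f$-volume of $E$ above the hyperplane ``costs more'' than the corresponding Euclidean length below it: precisely, the one-dimensional $f$-measure of $E_y$ is at least the Euclidean measure of $E_y$, with equality iff $E_y$ is (essentially) contained in $\{t\le 0\}$. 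Hence, defining $\ell(y):=|E_y|_f=\int_{E_y}f(y,t)\,dt$ and letting $E'_y:=(-\ell(y),0)$, the set $E':=\{(y,t):y\in\R^{N-1},\ t\in E'_y\}$ satisfies $|E'|=\int_{\R^{N-1}}\ell(y)\,dy=|E|_f$ and is entirely contained in $\{x_N<0\}$ (assuming $\ell(y)>0$ only on a set of positive measure; points with $\ell(y)=0$ contribute nothing).

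The heart of the matter is the perimeter inequality $P(E')\le P_f(E)$. For this I would first reduce to smooth $E$ (by the very definition of $P_f$ as a $\liminf$ over smooth approximations, together with lower semicontinuity of the Euclidean perimeter of the associated $E'$ under $L^1$-convergence — one checks that $E_h\to E$ in $L^1$ forces $\ell_h\to\ell$ in $L^1(\R^{N-1})$ and hence $E'_h\to E'$ in $L^1$). For smooth $E$ one uses the standard slicing/coarea representation of perimeter: writing $\nu=(\nu',\nu_N)$ for the unit normal, $P(E',\Rn)$ decomposes into the ``vertical'' contribution coming from the graph $t=-\ell(y)$ plus the ``flat'' contribution along $\{t=0\}$ on the projection $\pi(E')=\{\ell>0\}$, giving $P(E')=\int_{\R^{N-1}}\sqrt{1+|\nabla\ell(y)|^2}\,dy + |\{\ell>0\}|$ (interpreting $\nabla\ell$ in the BV sense if needed). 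On the other side, the $f$-perimeter of $E$ satisfies, by the coarea-type lower bound for graphs,
$$
P_f(E)\ \ge\ \int_{\R^{N-1}}\Big(\sqrt{1+|\nabla \ell(y)|^2}\Big)\,dy\ +\ \big(\text{boundary contributions}\big),
$$
and the point is that $f\ge 1$ makes the ``horizontal'' part of $\partial E$ weigh at least its Euclidean $(N-1)$-area, which dominates $|\{\ell>0\}|$ because the projection of $\partial E$ onto $\R^{N-1}$ covers $\{\ell>0\}$ with multiplicity at least one. Concretely, I would argue: $\int_{\partial E}f\,d\Hnmu \ge \int_{\partial E}f\,|\nu_N|\,d\Hnmu + \int_{\partial E}f\,|\nu'|\,d\Hnmu$ up to a harmless $\sqrt2$ — better, use that $1\le \sqrt{\nu_N^2+|\nu'|^2}\le |\nu_N|+|\nu'|$ and treat the two terms by the divergence theorem applied to $x\mapsto x_N$ and by Fubini on vertical lines respectively — and then compare term by term with the two pieces of $P(E')$, using $f\ge1$ throughout and $\lambda>1$ strictly above the hyperplane to locate the equality case.

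\textbf{Main obstacle.} The delicate point is the sharp tracking of the equality case and the rigorous handling of non-smooth/disconnected slices: a vertical line may meet $E$ in many components, some straddling $\{x_N=0\}$, so the naive ``graph + base'' decomposition of $P(E')$ must be justified via one-dimensional slicing of BV functions (the function $y\mapsto \ell(y)$ need only be $BV$, and $\{\ell>0\}$ is a set of finite perimeter). The cleanest route is to prove the inequality first for polyhedral $E$, where all slicing is elementary, then pass to the limit; the equality statement then follows because $P(E')=P_f(E)$ forces, in the limit, $\int_{E_y}(f-1)\,dt=0$ for a.e.\ $y$, i.e.\ $|E\cap\{x_N>0\}|=0$. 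I do not expect surprises in the routine estimates; the care is entirely in the measure-theoretic bookkeeping of the slices.
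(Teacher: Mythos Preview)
Your construction does not work: the set $E'=\{(y,t):-\ell(y)<t<0\}$ need not satisfy $P(E')\le P_f(E)$. Take $N=2$ and let $E$ be the unit disc centred at $(0,-2)$, so that $E\subset\{x_2<0\}$ and $f\equiv 1$ on $E$. Then $\ell(y)=2\sqrt{1-y^2}$ for $|y|<1$, and your $E'$ is the region between $t=0$ and the lower semi--ellipse $t=-2\sqrt{1-y^2}$. Its perimeter is $2$ (the flat top) plus half the perimeter of the ellipse with semi--axes $1$ and $2$, i.e.\ roughly $2+4.84\approx 6.84$, which is strictly larger than $P_f(E)=2\pi\approx 6.28$. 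So the inequality fails already in the case where $E$ lies entirely in the lower half--space --- precisely the case where you claim equality. The reason is that ``pushing each slice up to touch $\{t=0\}$'' is \emph{not} Steiner symmetrisation: different slices are translated by different amounts, and this can create perimeter. Your sketched perimeter estimate cannot be repaired, because the inequality you are trying to prove for this particular $E'$ is simply false.

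The paper's argument avoids slicing altogether. It uses the vertical stretching map $F(x',x_N)=(x',F_N(x_N))$ with $F_N(x_N)=\int_0^{x_N}\lambda(t)\,dt$ for $x_N\ge 0$ and $F_N(x_N)=x_N$ for $x_N<0$. Since $\det DF=\lambda$ on $\{x_N>0\}$, the set $F(E)$ has Euclidean volume equal to $|E|_f$; and by the change--of--variables formula for perimeter (Lemma~\ref{lemmaGiusti}) the Euclidean perimeter of $F(E)$ in $\{x_N>0\}$ is $\int_{\mathcal F E\cap\{x_N>0\}}\sqrt{\lambda^2|\nu'|^2+\nu_N^2}\,d\Hnmu\le\int\lambda\,d\Hnmu$, with equality iff $\nu_N=0$ a.e., which for bounded $E$ forces $|E\cap\{x_N>0\}|=0$. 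One then translates $F(E)$ down into $\{x_N<0\}$. No rearrangement on slices is needed.
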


\begin{proof}
Given a bounded set $E\subset \Rn$, we write $E_1:=E\cap
\{x_N<0\}$ and $E_2:=E\cap \{x_N\geq 0\}$; we claim that if $|E_2|>0$, then
there exists a set in $\{x_N<0\}$ with the same volume and strictly 
less perimeter than $E$. The volume of $E$ is given by
$$
|E|_f =|E_1|+\int_{E_2} \lambda(x_N)dx.
$$ 
We define the transformation $F:\Rn\to \Rn$ by
$F(x)=F(x^\prime,x_N)=(x^\prime,F_N(x_N))$, with
$$
F_N(x_N)=\left\{\begin{array}{ll}
x_N & \mbox{if } x_N<0 \\
\mbox{ } & \\
{\displaystyle \int_0^{x_N} \lambda(t)dt} & \mbox{if } x_N\geq 0.
\end{array}
\right.
$$
For this transformation we have that $JF$ is the identity $I_N$ on
$\Rn$ if $x_N<0$, while for $x_N> 0$ we have 
$$
J F(x)=\left(
\begin{array}{cc}
I_{N-1} & 0\\
0 & \lambda(x_N)
\end{array}
\right).
$$
This means that the set $\hat E:=F(E_2)$ has the
Lebesgue measure
\begin{equation}\label{Fvol}
|\hat E|=|E_2|_f.
\end{equation}
Regarding the Euclidean perimeter of $\hat E$, by Lemma \ref{lemmaGiusti} 
\begin{eqnarray}\label{Fper}
\nonumber
P(\hat E, \{x_N>0\})&=& 
\int _{{\cal F}E\cap\{x_N>0\}}
\sqrt{\lambda(x_N)^2(\nu_1^2+\ldots \nu_{N-1}^2)+\nu_N^2} d\Hnmu\\
&\leq&
\int _{{\cal F}E\cap\{x_N>0\}} \lambda(x_N) d\Hnmu
\end{eqnarray}
with equality if for $\Hnmu$--a.e. $x\in{\cal F}E$ we have
$\nu_N=0$. This cannot happen unless either $|E_2|=0$ or $E_2$ is 
a vertical cylinder; but in this second case, $E$ would not be bounded. 
Since $E$ is bounded, it is contained in a strip $\{|x_N|<r\}$ for some positive $r>0$. 
By translating down by $r$ in the $N$--th direction 
the set $F(E)$, we can construct a set contained in $\{x_N<0\}$, with
the same volume (in view of \eqref{Fvol}) as $E$ and less or equal perimeter (\ref{Fper}), which proves the claim.
\end{proof}

\begin{theo}\label{Serapioni}
For given volume, an isoperimetric set exists and is a round ball contained in the half--space
$\{x_N\leq 0\}$.
\end{theo}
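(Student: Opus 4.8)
The plan is to combine the existence part, which follows from general principles, with a reduction-to-a-ball argument resting on Lemma~\ref{constrLemma} and the classical Euclidean isoperimetric inequality. First I would establish existence. Since $\lambda$ is bounded, say $\lambda\le M$, the density $f$ satisfies $1\le f\le M$, so $f$-volume and Euclidean volume are comparable, and likewise for perimeter. A minimising sequence $\{E_h\}$ for prescribed $f$-volume $v$ therefore has uniformly bounded Euclidean perimeter, hence (after intersecting with large balls and using a standard truncation/concentration argument to prevent mass escaping to infinity in the $x_N<0$ direction, where the density is $1$) converges in $L^1_{\rm loc}$ to a set $E$ of finite perimeter. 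One must check no volume is lost: this is where the structure helps, since in $\{x_N\le0\}$ the problem is purely Euclidean and mass cannot vanish at infinity for a perimeter-bounded sequence without increasing perimeter in the limit; a concentration-compactness dichotomy rules out splitting. This gives an isoperimetric set $E$.

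Next I would invoke Lemma~\ref{constrLemma}. Strictly speaking the lemma is stated for bounded sets, so I would first argue that an isoperimetric set may be taken bounded — again because outside a large ball in the region $\{x_N\le0\}$ the density is trivial and a standard comparison with Euclidean isoperimetric estimates forces the set to be essentially bounded (or one approximates $E$ by bounded sets without loss in the relevant inequalities). Applying Lemma~\ref{constrLemma} to $E$ produces a set $E'\subset\{x_N<0\}$ with $|E'|=|E|_f=v$ and $P(E')\le P_f(E)$. But for any set contained in $\{x_N\le0\}$ the weighted perimeter equals the Euclidean perimeter, so $P_f(E')=P(E')\le P_f(E)$; by minimality of $E$ we get $P_f(E')=P_f(E)$, i.e. equality holds in Lemma~\ref{constrLemma}, which by that lemma's equality statement forces almost all of $E$ to lie in $\{x_N\le0\}$.

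Finally, once we know $E\subset\{x_N\le0\}$ up to a null set, the weighted isoperimetric problem for $E$ is exactly the classical Euclidean isoperimetric problem: minimise $P(E)$ among $E\subset\{x_N\le0\}$ with $|E|=v$. By De~Giorgi's theorem the unique (up to translation) minimiser in $\R^N$ is a ball, and a ball of volume $v$ can be placed inside the open half-space $\{x_N<0\}$, so it is admissible here; hence $E$ is a round ball contained in $\{x_N\le 0\}$. Conversely any such ball achieves the Euclidean isoperimetric value, which by the chain of inequalities above is the infimum of $P_f$, so it is genuinely isoperimetric. The main obstacle is the existence step: ensuring no $f$-volume escapes to infinity for a minimising sequence, which requires a concentration-compactness argument exploiting that the density degenerates to $1$ precisely in the unbounded directions of $\{x_N\le0\}$; the reduction argument afterwards is then essentially immediate from Lemma~\ref{constrLemma} and the classical inequality.
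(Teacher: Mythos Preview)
Your overall strategy is sound, but you have made the argument harder than necessary by insisting on proving existence of a minimiser \emph{first} and then applying Lemma~\ref{constrLemma} to that minimiser. The paper reverses the order: it applies Lemma~\ref{constrLemma} to the minimising \emph{sequence}, which sidesteps the existence issue you flag as ``the main obstacle''.

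Concretely, the paper takes a minimising sequence $F_h$ of smooth sets, and first makes each $F_h$ bounded by an explicit truncation: choose a large radius $t$ so that $\Hnmu(\partial F_h\setminus\overline B_t)<1/h$ and $\Hnmu(F_h\cap\partial B_t)<1/h$, discard $F_h\setminus B_t$, and restore the lost volume with a small Euclidean ball placed in $\{x_N<0\}$ away from everything else. This costs at most $O(1/h)$ in perimeter, so the modified bounded sets $E_h$ still form a minimising sequence. Now Lemma~\ref{constrLemma} applies to each $E_h$, giving a minimising sequence entirely inside $\{x_N<0\}$, where $f\equiv 1$. At that point there is nothing left to prove about compactness or escape to infinity: the classical Euclidean isoperimetric inequality immediately identifies the infimum as the perimeter of a ball of volume $v$, and any such ball placed in $\{x_N\le 0\}$ realises it. Uniqueness (that minimisers must lie in $\{x_N\le0\}$) then follows from the equality case of Lemma~\ref{constrLemma}, exactly as you argue.

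So the difference is one of economy: your route requires a genuine concentration-compactness argument that you only sketch, whereas the paper's route needs nothing beyond the elementary truncation step and the classical inequality. Your version would work if the compactness step were filled in, but it is worth noting that the lemma can do more work for you if applied earlier.
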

\begin{proof}
We may consider a minimising sequence $F_h$ of smooth sets, that is 
$F_h$ with $|F_h|_f=v$ and $P_f(F_h)\leq \alpha+\frac{1}{h}$ where
$$
\alpha =\inf \{ P_f(E): |E|_f=v\}.
$$
If $F_h$ is bounded, we set $E_h=F_h$; otherwise, 
if $F_h$ is not bounded, there exists $t>0$ such that $\Hnmu(\partial F_h \setminus \overline B_t) < \tfrac 1 h$ and
$$
\Hnmu (F_h\cap\partial B_t)=\Hnmu(\partial(F_h\setminus B_t)\cap\partial B_t)=\Hnmu(\partial(F_h\cap B_t)\cap\partial B_t)<\tfrac 1h, 
$$
where we used the fact that the first two equalities hold for a.e. $t>0$. By the Euclidean
isoperimetric inequality, we have that 
$$
|F_h\setminus B_t|<\frac{c_1}{h^\frac{N}{N-1}}.
$$
We can then consider a ball $B_r(x_0)$ with $r$ chosen in such a way that
$$
|B_r(x_0)|= |F_h\setminus B_t|_f < \frac{c_2}{h^\frac{N}{N-1}}
$$
and $x_0$ such that $B_r(x_0)$ is contained in $\{x_N<0\}$ and not intersecting $B_t$. 
Notice that $r\leq \tfrac {c_3}{h^{1/N-1}}$ and so the set
$$
E_h=(F_h\cap B_t\}) \cup B_r(x_0)
$$
has the properties $|E_h|_f=v$ and
\begin{align*}
P_f(E_h)=&P_f(F_h\cap B_t\})+\int_{\partial(F_h\cap B_t)\cap\partial B_t} \la\,d\Hnmu  + P(B_r(x_0))\\
\leq&
P_f(F_h)+\tfrac {c_4}{h}
\end{align*}
that is $E_h$ is also a minimising sequence with bounded sets. 

By Lemma~\ref{constrLemma}, we may assume that $E_h$ is contained in $\{x_N<0\}$, where the
density $f=1$. By the standard isoperimetric inequality, round balls are isoperimetric. By
a second application of Lemma~\ref{constrLemma}, round balls in $\{x_N\leq 0\}$ are uniquely isoperimetric.
\end{proof}

\begin{rmk}
When $\lambda:\R^N\to[1,+\infty)$ does not depend only on the last coordinate,
then it is no longer true in general that round balls in $\{x_N\leq 0\}$ are isoperimetric.
For instance, for the density
\begin{equation}\label{DenBall}
f(x)=1+\mu \chi_{B(2e_N,1)}(x)
\end{equation} 
with $\mu>0$, $B(2e_N,1)$ is isoperimetric. 
We shall 
discuss in Section \ref{SecBalls} the case of the density (\ref{DenBall}) in the plane $\R^2$.
\end{rmk}

\subsection{The strip}\label{SecStrip}
In this section we shall consider the ``strip'' density in $\rr^N$ given by
$$
f(x)=\left\{
\begin{array}{ll}
1 & \mbox{if } |x_N|\leq 1 \\
\lambda\in\R & \mbox{if } |x_N|>1
\end{array}
\right.
$$
with $\lambda>1$. 

\begin{rmk}
For the density
$$
f(x)=\left\{
\begin{array}{ll}
\lambda & \mbox{ if }|x_N|<1 \\
1 & \mbox{ if } |x_N|\geq 1
\end{array}
\right.
$$
isoperimetric sets exist and are round balls outside the slab. Indeed, Lemma~\ref{constrLemma}
shows that any set $E$ may be replaced by a set outside the slab with no more perimeter, with equality if and only
if the boundary of $E$ is vertical inside the slab. But if this fact occurs, the original set $E$ will be an unbounded vertical cylinder or will not have constant curvature,
and so cannot be isoperimetric. This allows us to consider only sets contained in $\{|x_N|>1\}$.
\end{rmk}

We start with an existence result for the planar case.

\begin{prop}\label{PropExistN2}
For the above strip density in the case $N=2$, an isoperimetric set exists for any prescribed volume and is connected.
\end{prop}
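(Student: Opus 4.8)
The plan is to follow the usual direct-method scheme: take a minimising sequence of sets $E_h$ with $|E_h|_f = v$ and $P_f(E_h) \to \alpha := \inf\{P_f(E):|E|_f = v\}$, and show that (after modifications) it converges to an isoperimetric set, then deal with connectedness separately. The first and main point is that a minimising sequence cannot ``run off to infinity'' in the horizontal direction, because the density is $\geq 1$ everywhere, and in the $x_1$-direction it is translation-invariant, so mass escaping to horizontal infinity could be recollected at no perimeter cost --- but one must rule out oscillation and splitting. Concretely, I would first argue we may replace $E_h$ by \emph{bounded} sets that are still minimising, exactly as in the proof of Theorem~\ref{Serapioni}: if $E_h$ is not bounded, cut it with a large ball $B_t$ (choosing $t$ among the a.e.-good radii so that $\Haus^1(E_h\cap\partial B_t)$ is small), use the Euclidean isoperimetric inequality to bound $|E_h\setminus B_t|$, hence $|E_h\setminus B_t|_f \leq \lambda|E_h\setminus B_t|$ is small, and restore the lost volume with a tiny ball placed far away inside, say, $\{x_N > 1\}$; the extra perimeter is $O(1/h^{N/(N-1)}) + O(1/h)$, so the new sequence is still minimising and made of bounded sets.

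Next, the key compactness step: I would show the bounded minimisers can be taken to lie in a fixed slab $\{|x_1|\leq R\}$ with $R$ independent of $h$. The idea is a translation-and-concentration argument in the $x_1$-direction. For a bounded finite-perimeter set $E$ with $|E|_f = v$, let $m(s) = |E\cap\{x_1 < s\}|_f$; if $E$ ``spreads out'' too much, there is a vertical hyperplane $\{x_1 = s\}$ cutting $E$ into two pieces each of $f$-volume bounded below by some fixed fraction of $v$ and with $\Haus^{N-1}(E\cap\{x_1=s\})$ small (this uses that the $f$-volume between two far-apart vertical hyperplanes enclosing little set would have to be little). Slide the two pieces together horizontally; since $f$ does not depend on $x_1$, $f$-volume is preserved and the only perimeter change is removing the overlap interface, so perimeter strictly decreases unless the cut was empty. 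Iterating/taking an infimum, a minimising sequence can be taken inside a fixed horizontal slab; combined with boundedness in $x_N$ (which follows similarly, or by noting that outside $\{|x_N|\leq 1\}$ the density is the constant $\lambda$ and the slab-density remark lets one compress, as in the Remark preceding this Proposition), we get $E_h \subset B_R$ for fixed $R$. Now standard $BV$-compactness applies: $\chi_{E_h} \to \chi_E$ in $L^1_{\mathrm{loc}}$, $|E|_f = v$ by dominated convergence on $B_R$, and $P_f(E) \leq \liminf P_f(E_h) = \alpha$ by lower semicontinuity of the weighted perimeter (the density is bounded between $1$ and $\lambda$ on $B_R$, so weighted and Euclidean perimeter are comparable and the semicontinuity is the classical one, appropriately localised --- or one invokes Proposition's part (1)-type reasoning directly). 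Hence $E$ is isoperimetric.

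For connectedness: suppose the isoperimetric $E$ had (essentially) two disjoint components $E^1, E^2$ of positive $f$-volume. Up to a horizontal translation of $E^2$ we may assume $E^1$ and $E^2$ are disjoint and, being bounded, lie in disjoint horizontal slabs, so $P_f(E) = P_f(E^1) + P_f(E^2)$ and $v = |E^1|_f + |E^2|_f =: v_1 + v_2$. A standard comparison then shows it is strictly better to put \emph{all} the volume into a single component: one uses the strict subadditivity of the isoperimetric profile. The cleanest way is to dilate. Note that for this density, rescaling a set in $\{|x_N|\leq 1\}$... but the slab breaks scaling, so instead I would argue as follows. By Theorem~\ref{calibrations}/Corollary~\ref{corhyperplanes} the horizontal lines $\{x_N = c\}$ are perimeter-minimising, and one can also compare with half-plane competitors; more robustly, a classical argument (e.g. as in Morgan's book) shows that for a density bounded away from $0$ in the plane, a disconnected isoperimetric region can always be strictly improved by merging, because the smaller piece $E^2$ can be replaced by enlarging $E^1$ along its boundary, and the resulting perimeter gain is strictly negative for small transferred volume by the isoperimetric inequality estimate~\eqref{estdiamper}. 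I expect the \textbf{main obstacle} to be making the horizontal-translation compactness argument fully rigorous --- precisely, showing that a minimising sequence really does concentrate (no vanishing, no dichotomy in the $x_1$-direction) --- since the translation invariance that makes concentration \emph{possible} is the same feature that makes non-compactness a genuine danger; a concentration-compactness lemma in the spirit of Lions, adapted to weighted perimeter, is the natural tool.
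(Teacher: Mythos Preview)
Your overall strategy is correct, but the paper's route is considerably more elementary because it exploits the planar diameter estimate~\eqref{estdiamper} up front rather than as an afterthought. Instead of a concentration-compactness argument in the $x_1$-direction, the paper (i) takes a minimising sequence of smooth sets with finitely many components, (ii) uses Lemma~\ref{constrLemma} to arrange that every component either lies in the strip or touches both lines $\{y=\pm 1\}$, (iii) slides the components horizontally (or vertically, if inside the strip) until they touch tangentially and then cuts, so that each $E_h$ becomes \emph{connected} with no increase in perimeter, and (iv) centres horizontally. Since a connected planar set has diameter at most half its perimeter by~\eqref{estdiamper}, the modified $E_h$ now all sit in a single fixed ball, and compactness is immediate---no Lions-type dichotomy analysis is needed. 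Your sliding idea is precisely step~(iii), but you deploy it only to rule out splitting; the paper instead uses it to force connectedness \emph{first}, which unlocks the diameter bound and collapses the whole compactness issue. The same slide-and-cut manoeuvre, applied to the minimiser itself, gives connectedness in one line and avoids the subadditivity/merging argument you sketch (as well as the scaling obstruction you correctly flagged). What your heavier approach would buy is robustness in higher dimensions, where~\eqref{estdiamper} is unavailable; in $N=2$ it is unnecessary machinery.
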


\begin{proof}
Take a minimising sequence $E_h$ of smooth sets with $|E_h|_f=v$ and each $E_h$ consisting of finitely many
components. By Lemma \ref{constrLemma} we may assume that any connected component of $E_h$ 
either has non--empty intersection with the lines
$\{y=1\}$ and $\{y=-1\}$ or is contained in 
$\{-1\leq y\leq1\}$. 

Move horizontally (or also vertically, but remaining inside the strip, in case of a ball contained in it) the connected components of $E_h$ until they touch each other tangentially; using a cutting argument, replace $E_h$ with a connected one. 
Move horizontally the sets $E_h$ to have barycenter $(0,y_h)$; the sets $E_h$ are connected and so, using the diameter estimate (\ref{estdiamper}), they 
satisfy $|y_h|\leq P_f(E_h)+1$. This means that all the sets $E_h$ are contained in a big ball,
and a compactness argument implies the existence of an isoperimetric region: were it not connected, we could use one of the above ``components translation'' argument to diminish perimeter.
\end{proof}

We can also state the following result that holds in any dimension $N$, under the assumption
that the isoperimetric set exists.

\begin{prop}\label{WSteiner}
Let $E$ be an isoperimetric set for given volume $v>0$, for the above strip density. 
Then, there exists an isoperimetric set for volume $v$ which is rotationally symmetric with respect to the $x_N$-axis. 
Moreover, the intersection of $E$ with almost every horizontal hyperplane is a round ball, 
$E$ has regular trace on $\{|x_N|=1\}$ and $E$ is either entirely contained in $\{-1< x_N<1\}$ or 
it will touch simultaneously both sets $\{x_N\geq1\}$ and $\{x_N\leq-1\}$.

\end{prop}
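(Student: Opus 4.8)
The plan is to combine two classical symmetrization techniques, adapted to the strip density, with the first-variation/Snell-law machinery already developed. The rotational symmetry with respect to the $x_N$-axis and the fact that horizontal slices are balls should follow from Schwarz symmetrization: slice $E$ by the hyperplanes $\{x_N=t\}$, and replace each slice $E\cap\{x_N=t\}$ by the $(N-1)$-dimensional ball of the same $\Leb^{N-1}$-measure, centered on the axis. Since the density $f$ depends only on $x_N$, it is constant on each such slice, so the $f$-volume is unchanged. For the perimeter one uses the coarea-type decomposition of $P_f$ into the ``horizontal'' contribution (the slice perimeters, weighted by $f(t)$) and the ``vertical'' contribution; the Euclidean isoperimetric inequality in $\R^{N-1}$ shows the horizontal part does not increase, and a standard convexity/rearrangement argument (as in Steiner symmetrization proofs, e.g.\ the ones cited via \cite{Ros05The}) shows the vertical part does not increase either. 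Hence the symmetrized set has the same $f$-volume and no more $f$-perimeter, so it is again isoperimetric; calling it $E$ again, almost every horizontal slice is a round ball centered on the axis.

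Next I would address the dichotomy: either $E\subset\{-1<x_N<1\}$, or $E$ meets both $\{x_N\ge 1\}$ and $\{x_N\le -1\}$. Suppose $E$ is not contained in the open slab but, say, stays in $\{x_N>-1\}$ (up to the obvious symmetry $x_N\mapsto -x_N$). The idea is a ``sliding down'' argument: translate $E$ downward in the $x_N$-direction. Because the density is $1$ for $|x_N|\le 1$ and $\lambda>1$ for $|x_N|>1$, sliding the part of $E$ that lies in $\{x_N>1\}$ toward the slab strictly decreases the weighted volume of that part while not increasing perimeter — or, phrased as a competitor construction, one can rescale/translate to restore the volume and end up with strictly less perimeter, contradicting minimality, unless the portion of $\partial E$ in $\{x_N>1\}$ were vertical, which is impossible for a bounded rotationally symmetric set that is not the whole cylinder. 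Alternatively, and perhaps more cleanly, this is exactly the situation covered by Lemma~\ref{constrLemma} (with roles adapted): any connected component touching only one of the lines $\{x_N=\pm1\}$ can be pushed into the configuration claimed. So the surviving possibilities are the two stated ones.

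Finally, the regular-trace assertion on $\{|x_N|=1\}$: since $f$ is piecewise constant with Lipschitz interfaces $\Gamma=\{|x_N|=1\}$, and far from $\Gamma$ standard regularity (Morgan \cite{Mor03Reg}) gives $\ci^{1,1}$ boundary with small singular set, the only place where extra care is needed is along $\Gamma$. There, rotational symmetry reduces the picture to a planar profile curve meeting a line, and Proposition~\ref{teofresnel} forces the boundary to cross $\Gamma$ at the fixed Snell angle determined by $\cos\alpha_+/\cos\alpha_- = 1/\lambda$ (or be tangential in the degenerate case handled by Proposition~\ref{CorTangential}); in either case the trace of ${\cal F}E\cap\Omega_i$ on $\Gamma$ is a union of concentric spheres, hence rectifiable with finite $\Haus^{N-2}$ measure, i.e.\ \textbf{(r1)} and \textbf{(r2)} hold and $E$ has regular trace.

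The main obstacle I anticipate is the Steiner-type step controlling the \emph{vertical} part of the perimeter under Schwarz symmetrization: one must verify that replacing slices by centered balls does not increase $P_f$, which in the weighted setting requires checking that the relevant one-dimensional rearrangement inequality still goes through with the $x_N$-dependent weight $f$ — this is where the argument is least ``routine'' and where one must lean on the symmetrization results for densities (\cite{Ros05The}) rather than on the plain Euclidean statement. The dichotomy step is conceptually easy but needs a careful competitor construction to rule out vertical boundary pieces rigorously.
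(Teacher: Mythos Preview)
Your overall strategy (Schwarz symmetrization plus Lemma~\ref{constrLemma}) is the paper's strategy, but two steps are genuinely misfired.

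First, the perimeter inequality under symmetrization: you worry about a ``weighted one-dimensional rearrangement'' for the vertical part, but since $f$ is \emph{piecewise constant} in $x_N$, inside each of the three open regions $\{x_N>1\}$, $\{|x_N|<1\}$, $\{x_N<-1\}$ the density is constant and ordinary Schwarz symmetrization gives $P(E_i^*)\le P(E_i)$ with no weight issue at all. What you have not accounted for is the contribution of the boundary lying \emph{on} the interfaces $\{x_N=\pm1\}$, namely the sets $S_{\pm1}=\{x: x_N=\pm1,\ |\chi_{E,e_N}^+-\chi_{E,e_N}^-|=1\}$, which carry $\Hnmu$-measure in $P_f$. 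The paper's actual work is the trace computation $\Hnmu(S^*_{\pm1})=|\Hnmu(S^+_{\pm1})-\Hnmu(S^-_{\pm1})|\le\Hnmu(S_{\pm1})$; your ``horizontal/vertical'' decomposition does not see this term, so as written your inequality $P_f(E^*)\le P_f(E)$ has a gap precisely where the density jumps. Relatedly, the ``moreover'' claims in the statement concern the \emph{original} isoperimetric $E$, not $E^*$; you get that almost every slice of $E$ is a ball from the \emph{equality} case of Schwarz symmetrization (cf.\ \cite{CCF,talenti}), not by renaming $E^*$ as $E$.

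Second, your argument for regular trace is circular: Propositions~\ref{teofresnel} and~\ref{CorTangential} take regular trace (conditions \textbf{(r1)}--\textbf{(r2)}) as a \emph{hypothesis}, so you cannot invoke the Snell law to establish it. The paper instead argues directly: once you know that the slices $E\cap\{x_N=1\pm\varepsilon\}$ are round balls, they converge in $L^1(\R^{N-1})$ as $\varepsilon\to0$ (by continuity of the trace operator, e.g.\ \cite[Theorem~2.11]{Giu84Min}), hence the one-sided traces on $\{x_N=\pm1\}$ are themselves round balls; this immediately gives rectifiable $(N-2)$-boundary and verifies \textbf{(r1)}--\textbf{(r2)}. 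The dichotomy step via Lemma~\ref{constrLemma} is fine and matches the paper.
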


\begin{proof}
The argument is based on Schwarz symmetrisation (see \cite[\S~3.2]{Ros05The}), 
which can be applied in this setting; 
define $E=E_1\cup E_2\cup E_3 \cup S_1\cup S_{-1}$ with
$$
E_1=E\cap \{x_N>1\}, \quad E_2=E\cap \{-1<x_N<1\}, \quad E_3=E\cap \{x_N<-1\} 
$$
and 
\begin{equation}\label{S}
S_{\pm1}=\{x\in \Rn: x_N=\pm1, |\chi_{E,e_N}^+(x)-\chi_{E,e_N}^-(x)|=1 \}.
\end{equation}
We also define the sets
$$
S_1^\pm=\{x\in \Rn: x_N=1, \chi_{E,e_N}^\pm=1\},\quad
S_{-1}^\pm=\{x\in \Rn: x_N=-1, \chi_{E,e_N}^\pm=1\}.
$$
Notice that
\begin{align*}
P_f(E)=&\Hnmu(S_1\cup S_{-1})+\Hnmu({\cal F}E\cap \{-1<x_N<1\})+\\
&+\lambda\Hnmu({\cal F}E\cap \{x_N>1\}+
\lambda\Hnmu({\cal F}E\cap \{x_N<-1\}).
\end{align*}
We can now consider the Schwarz-symmetrised
set of $E$, 
i.e. the set $E^*$ such that $E^*\cap\{x_N=\bar y\}$ is a round ball centered at $(0,\dots,0,\bar y)$ with measure equal to $\Hnmu(E\cap\{x_N=\bar y\})$. One has 
\begin{equation}\label{symmetrized}
|E^*|=|E|, \qquad P(E^*)\leq P(E)
\end{equation}
and almost every horizontal slice of $E$ is a round ball in case $P(E^*)= P(E)$ (see \cite[\S~1]{CCF} or \cite{talenti}).
We notice that (\ref{symmetrized}) holds also on the sets $E_i$, $i=1,2,3$.
As
\begin{align*}
\Hnmu(S^*_{\pm 1}) =\:&\Hnmu\big( S^{*+}_{\pm 1}\Delta S^{*-}_{\pm 1}\big)\
=\ |\Hnmu(S^+_{\pm 1})-\Hnmu(S^-_{\pm 1})|  \\
=\:&
\left|
\int_{\R^{N-1}} (\chi_{E,e_N}^+ -\chi_{E,e_N}^-)d\Hnmu
\right| \\
\leq\: & 
\int_{\R^{N-1}} |\chi_{E,e_N}^+-\chi_{E,e_N}^-| d\Hnmu
= \Hnmu(S_{\pm1}),
\end{align*}
where $S^*_{\pm 1}$ and $S^{*\pm}_{\pm 1}$ are defined as in (\ref{S}) with $E^*$ instead of $E$, 
we can conclude, since $f$ is constant along horizontal directions, that
\begin{equation}\label{eq:Schw}
|E^*|_f=|E|_f\qquad\text{and}\qquad P_f(E^*)\leq P_f(E). 
\end{equation}
Since $E$ is isoperimetric, we have the equality in \eqref{eq:Schw}, 
and so $E^*$ is an isoperimetric solution for volume $v$, 
which is in fact rotationally symmetric with respect to the $x_N$-axis. 
Moreover, we also deduce that the intersection of $E$ 
with almost every horizontal hyperplane is a round ball. 

The regularity of the trace of $E$ on $\{|x_N|=1\}$ follows since the intersections
of the isoperimetric set with the planes $\{x_N=1\pm \varepsilon\}$ (similar argument on the jump part
$\{x_N=-1\}$) are round balls converging in $L^1(\R^{N-1})$ as $\varepsilon \to 0$ 
(see for instance Giusti \cite[Theorem 2.11]{Giu84Min}); 
therefore the traces on the jump set are round balls.

For the last sentence, if $|E\cap \{x_N>1\}|>0$ and
$d(E,\{x_N<-1\})>0$, we can move down $E$ using Lemma~\ref{constrLemma}, 
and modify $E$ into a set $\tilde E$ with same volume and less perimeter.
%
\end{proof}
%

\begin{rmk}
Notice that Proposition \ref{WSteiner} implies that the isoperimetric
problem for the ``strip'' density is essentially two-dimensional, meaning 
that the boundary of the isoperimetric set is given by the rotation
on the vertical axis of the graph of a generating function $h=h(x_N)$. 
Let us stress that this constitutes a different problem with respect to the ``strip'' case with $N=2$, since 
now the isoperimetric problem to be considered has different
densities on volume and perimeter, more precisely
\begin{align*}
\tilde V_f(E)=&\omega_{N-1}\int_\R f(t)h^{N-1}(t)dt, \\
\tilde P_f(E)=&(N-1)\omega_{N-1} \int_\R f(t)h(t)^{N-2}\sqrt{1+h'(t)^2}dt.
\end{align*}
\end{rmk}


We will now focus on the case $N=2$. 
In this situation, we know that isoperimetric sets exist, 
are connected and that almost all horizontal slices are intervals
(from Propositions \ref{PropExistN2} and \ref{WSteiner}).
Furthermore, Snell law \eqref{eqfresnel} and constant geodesic curvature condition 
are satisfied. 
Therefore, we have that the boundary of any isoperimetric set $E$ is a piecewise regular curve $\gamma$ 
with regular pieces consisting of either arcs $\gamma_i$ of some circles with the same curvature
or line segments contained in $\{|y|=1\}$. 
Moreover, the part of $\gamma$ contained in $\{(x,y): y>1\}$ is at most one arc of a circle; 
the same holds in $\{(x,y): y<-1\}$, while there can be two arcs inside the strip. 
This allows to classify the isoperimetric candidates which are rotationally symmetric in this case. 

\begin{lemma}\label{lemstripposs}
For the strip density for $N=2$ the only isoperimetric candidates 
with vertical reflective symmetry (see Figure \ref{strip1}) are, up to density-preserving isometries: 
\begin{itemize}
\item[(i)] balls contained in the strip;  
\item[(ii)] pieces of the strip bounded left and right by two semicircles, and top and bottom by two segments on $\{|y|=1\}$; 
\item[(iii)] sets bounded by three circular arcs with the same radius 
and a segment contained in $\{y=-1\}$. One of these arcs is contained in $\{y>1\}$, 
and the other two are contained in the strip, 
meeting the segment tangentially and the other arc according to the Snell law
(\ref{eqfresnel}). 
The segment can degenerate to a single point; in this case, the set is a circle with radius 2 and center on the line $\{y=1\}$;
\item[(iv)] sets bounded by four circular arcs with the same radius: two of 
them contained in the strip; the other two 
horizontally symmetric, one contained in $\{y>1\}$ and another contained in $\{y<-1\}$, 
satisfying the Snell law.
\end{itemize}

\begin{figure}[htbp]
\begin{center} 
\scalebox{1}{ 
\input{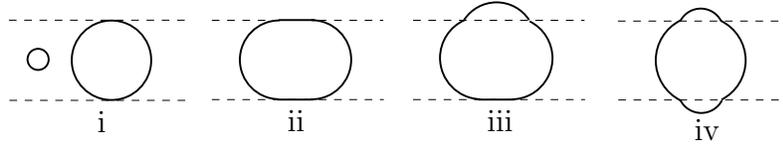} 
} 
\end{center}
\caption{The candidate isoperimetric sets.}
\label{strip1} 
\end{figure}
\end{lemma}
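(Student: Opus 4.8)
The plan is to exploit the structural information already gathered — that the boundary $\gamma$ of an isoperimetric set $E$ in the planar strip consists of circular arcs of a single common radius $R$ together with segments lying on $\{|y|=1\}$, that $E$ is connected and vertically reflective by hypothesis, that almost every horizontal slice is an interval, and that the crossings with $\{y=\pm1\}$ obey Snell's law \eqref{eqfresnel} while tangential contacts obey Proposition \ref{CorTangential} — and to combine these into a finite case analysis on how $\gamma$ sits relative to the two lines $\{y=1\}$ and $\{y=-1\}$. First I would record the hard combinatorial constraint: by Proposition \ref{WSteiner} (the $N=2$ instance) $E$ is either contained in the open strip $\{-1<y<1\}$, or it meets \emph{both} $\{y\geq 1\}$ and $\{y\leq -1\}$; and since any regular arc in $\{y>1\}$ has the fixed curvature $1/R$ and the slices are intervals, $\gamma\cap\{y>1\}$ can be at most one arc (convex, bulging upward), and likewise at most one arc in $\{y<-1\}$, while inside the strip the curvature condition together with connectedness and the interval property leaves room for at most two arcs.

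Next I would split into the two cases. In the first case, $E\subset\{-1<y<1\}$: here $f\equiv 1$ on a neighbourhood of $E$, so $E$ is a genuine Euclidean isoperimetric set relative to the strip, its boundary has constant Euclidean curvature, and if it touches neither line it must be a round disk — this is candidate (i); if it touches a line it must do so tangentially by Proposition \ref{CorTangential} (applied with $f_i=1=f^-_{\nu_\Gamma}$), but a circular arc of the bottom of $E$ tangent to $\{y=-1\}$ from above forces, by the interval/slice structure and vertical symmetry, the configuration of candidate (ii) (a "capsule": two horizontal segments on $\{|y|=1\}$ closed off left and right by semicircles) or degenerates back to (i). In the second case $E$ meets both $\{y\geq 1\}$ and $\{y\leq -1\}$. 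Now I would distinguish according to whether the top arc actually pokes into $\{y>1\}$ (strictly) or merely touches $\{y=1\}$ tangentially, and symmetrically at the bottom. If both the top and bottom arcs are strictly above $\{y=1\}$ and below $\{y=-1\}$ respectively, then $\gamma$ has four arcs — one above, one below, and two inside the strip joining them — and Snell's law \eqref{eqfresnel} at each of the (by vertical symmetry, paired) transversal crossings pins down the angles; the horizontal reflection symmetry then forces the two inner arcs to be mirror images, giving candidate (iv). If instead the bottom contact is tangential (so the set contains a bottom segment on $\{y=-1\}$, possibly a point), while the top arc is strictly above, then $\gamma$ has three arcs plus that segment: one arc in $\{y>1\}$, two inner arcs meeting the segment tangentially (Proposition \ref{CorTangential}) and meeting the upper arc transversally via Snell — candidate (iii), with the stated degeneration to a radius-$2$ circle centred on $\{y=1\}$ when the segment shrinks to a point. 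The remaining sub-case, both contacts tangential with a strict interior piece, can be excluded because tangential contact on both lines plus the interval property would force $E$ to be the whole slab piece, i.e. it reduces again to (ii), or violates constant curvature.

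I expect the main obstacle to be the bookkeeping in the second case: verifying that no \emph{other} gluing pattern of "at most one arc above, at most one below, at most two inside" is compatible \emph{simultaneously} with (a) the single common radius, (b) connectedness with interval-shaped slices, (c) the vertical and (assumed) horizontal symmetry, and (d) the Snell/tangency matching conditions at the four (or two) contact points — in particular ruling out configurations where an inner arc would have to re-enter $\{y>1\}$ or where the two inner arcs fail to close up. The cleanest way to handle this is to parametrise each candidate boundary by the radius $R$ and the angle $\alpha_+$ at which the inner arc meets the line, use \eqref{eqfresnel} to eliminate $\alpha_-$, and check that the remaining freedom is exactly what is listed in (iii) and (iv); the tangency cases (i), (ii) then appear as the boundary of this parameter range. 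This is the step that carries the real content; the rest is assembling already-proven facts. One should also note explicitly — as the statement's phrase "almost completely proved" in Theorem \ref{teostrip} signals — that this lemma only enumerates the candidates with the assumed reflective symmetry and does not by itself establish that the minimiser has that symmetry; that gap is what keeps Conjecture \ref{conj:strip} from being fully closed.
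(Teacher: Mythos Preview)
Your overall strategy---split according to whether $E$ lies in the strip or meets both half-planes, then organise by the number of outer arcs---matches the paper's, and your identification of (i) and (ii) in the first case is fine. But in the second case there are two genuine gaps.

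First, in the four-arc situation you write that ``the horizontal reflection symmetry then forces the two inner arcs to be mirror images'', and later list ``the vertical and (assumed) horizontal symmetry'' among the constraints. Horizontal symmetry about $\{y=0\}$ is \emph{not} assumed anywhere: the lemma only hypothesises vertical reflective symmetry, and the horizontal symmetry in (iv) is part of the \emph{conclusion}. The paper devotes its Step~2 to proving this: with the upper and lower arcs having angles $\al,\be$ to the inner arc and $\g,\de$ via Snell, the vertical-axis constraint (the centres $P$ and $Q$ must lie on the same vertical) becomes $\p(\sin\al)=\p(\tfrac{2}{R}-\sin\al)$ for the even, strictly unimodal function $\p(s)=\sqrt{1-s^2}-\sqrt{1-s^2/\la^2}$, which forces $\sin\al=1/R$ and hence $\al=\be$. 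Without this computation you have not shown that an asymmetric four-arc configuration is impossible.

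Second, your case split ``top strict / bottom tangential'' versus ``both strict'' versus ``both tangential'' misses hybrid configurations that the paper has to rule out by hand. One is the set of Figure~\ref{figcasev}: inner arcs tangent to $\{y=-1\}$ (so there is a bottom segment) \emph{and} an arc poking into $\{y<-1\}$ with contact angle $\arccos\tfrac{1}{\la}$. This is compatible with Snell, connectedness, and interval slices; the paper eliminates it in Step~1 by a chord-length inequality showing that the lower arc is too wide to fit over the segment. The other is the radius-$1$ configuration of Figure~\ref{fig1or2tangarcs}, where the inner arcs are tangent to both $\{y=\pm1\}$ and outer arcs meet at the limiting Snell angle; this is excluded in Step~3 by an explicit perimeter comparison (replace an outer cap by a segment and widen the set). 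Your proposed parametrisation by $(R,\al_+)$ would eventually detect these, but the exclusions are not automatic and each needs its own argument.

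Finally, your closing remark is off: Proposition~\ref{prop:simetrico} does establish vertical symmetry of any minimiser, so that is not the open point. What keeps Conjecture~\ref{conj:strip} open is the elimination of candidate~(iv) for all $\la>1$, which Theorem~\ref{teostrip} achieves only for $\la\geq 4/\pi$.
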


\begin{proof}
Let us briefly sketch how to restrict to such candidates. If an isoperimetric set is completely contained in the strip it is easy to reduce to candidates (i) and (ii). Otherwise, by symmetry we can suppose there is an arc in $\{y>1\}$ meeting the strip; 
Lemma \ref{constrLemma} allows us to discard balls outside the strip. 
Unless the contact angle is not $\arccos 1/\la$, the arc is continued (starting from each of its endpoints according to Snell law) by two arcs inside the strip: they have to meet the line $\{y=-1\}$, or we could use construction of Lemma \ref{constrLemma}.
If the meeting is tangential there are two possibilities: the boundary can be continued by a segment and then a tangential arc in the strip (case (iii)) or immediately by an arc in the strip. In this last case there would be an arc inside the strip tangent to $\{y=-1\}$ plus another arc in $\{y>1\}$: they would have the same curvature and two common endpoints, so they must form a unique complete circle or be symmetric with respect to the line $\{y=1\}$. This contradicts the Snell law unless the meeting angle with $\{y=1\}$ is $\tfrac\pi2$, i.e., the set is a ball of radius 2. There is a priori another possibility (see Figure \ref{figcasev}): the meeting with $\{y=-1\}$ is tangential and the boundary is continued by a segment and then an arc in $\{y<-1\}$ with angle $\arccos\tfrac{1}{\la}$. We will exclude this possibility in Step 1 below.

If the meeting with $\{y=-1\}$ is not tangential there must be an arc in $\{y<-1\}$: this is case (iv). 
Step 2 below will prove that this set must be also symmetric with respect to the horizontal axis $\{y=0\}$.

If the upper arc meets $\{y=1\}$ with angle $\arccos \tfrac 1\la$ there are two possibilities for each of its endpoints: it is continued by a segment and then by a tangential arc, or immediately by a tangential arc. Again, these arcs must meet the line $\{y=-1\}$: if the meeting is not tangential there should be an arc in $\{y<-1\}$. This configuration is symmetric to the one 
of Figure \ref{figcasev}, which  we are going to exclude in Step 1. 
If the meeting is tangential, the arc's radius must be 1 and one must have also a segment on $\{y=-1\}$ and, possibly, another arc on $\{y<-1\}$ with meeting angle $\arccos \tfrac 1\la$ as in Figure \ref{fig1or2tangarcs}: this set cannot be isoperimetric 
as shown in Step 3.

A little problem could be given by the upper arc meeting $\{y=1\}$ with angle $\pi-\arccos \tfrac1\la$ (see Figure \ref{figline} in Theorem \ref{theoline}): the continuation would be a segment on $\{y=1\}$ (no inner arc is allowed by Snell law). This set would be entirely contained in $\{y\geq 1\}$, thus not being isoperimetric.

{\em Step 1.} 
We will show that the configuration of Figure \ref{figcasev} is geometrically impossible. This supposed
configuration consists of
two segments in $\{y=-1\}$ (each of them possibly reducing to a single point) and four circular arcs with the same radius; two of them vertically symmetric, contained in the strip and tangent to $\{y=-1\}$ but not to $\{y=1\}$; the other two contained in each component of $\rr^2\setminus\{|y|<1\}$, all satisfying the Snell law. In particular, the arc in $\{y<-1\}$ meets the strip with angle $\arccos\tfrac1\la$.

\begin{figure}[htbp]
\begin{center} 
\scalebox{1}{ 
\input{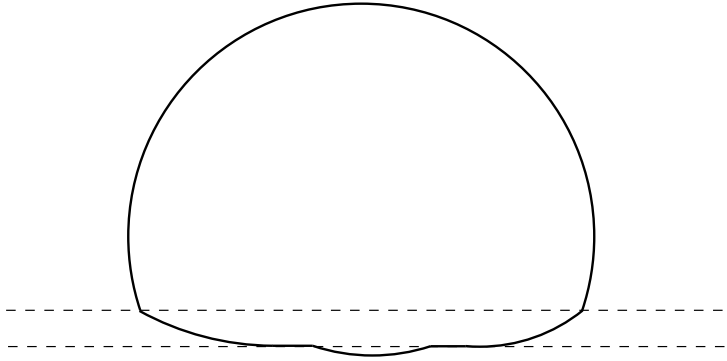} 
} 
\end{center}
\caption{A not feasible configuration.}
\label{figcasev} 
\end{figure}

We can think of this configuration as obtained from a configuration (iii) by adding the arc in $\{y<-1\}$; 
we will show that this arc is too long to fit.
The chord associated with this arc rests upon the segment on $\{y=-1\}$ which appears in (iii). This chord has length $2R\sqrt{1-\tfrac{1}{\la^2}}$, $R\in[1,+\infty[$ being the common radius of the arcs. 
We will get a contradiction if we show that the segment of configuration (iii) is shorter. Indeed, its length is equal to
\begin{eqnarray}
&&2R\left( \sqrt{1-\tfrac{1}{\la^2}\big( 1-\tfrac2R\big)^2}-\sqrt{1-\big( 1-\tfrac2R\big)^2}\right)\nonumber\\
&=& 2R\left( \sqrt{\big(1-\tfrac{1}{\la^2}\big)+\tfrac{1}{\la^2}\de}-\sqrt{\de}\right)\nonumber\\
&\leq& 2R\left( \sqrt{1-\tfrac{1}{\la^2}}+\big(\tfrac{1}{\la}-1\big)\sqrt{\de}\right),
\label{deltaappears}
\end{eqnarray}
where we put $\de:=\tfrac4R-\tfrac4{R^2}\geq0$ (since $R\geq 1$) and used the inequality $\sqrt{a+b}\leq\sqrt a+\sqrt b$. The last term in \eqref{deltaappears} is strictly less than $2R\sqrt{1-\tfrac{1}{\la^2}}$ unless $\de=0$, i.e. $R=1$. The latter case $R=1$ will be discarded in the following Step 3.

{\em Step 2.} 
Here we show that the four--arcs configuration of type (iv) must be symmetric with respect to the horizontal
line $\{y=0\}$. As in Figure \ref{fig4nonsymm}, let $P$ be the center of the arc in $\{y>1\}$, and let $A,C$ be its endpoints on the line $\{y=1\}$. Similarly, $Q$ is the center of the lower arc and $B,D$ its endpoints. By $O$ we denote the center of the left arc inside the strip; $K$ belongs to this arc and is chosen so that $OK$ is horizontal. 

\begin{figure}[htbp]
\begin{center} 
\scalebox{1}{ 
\input{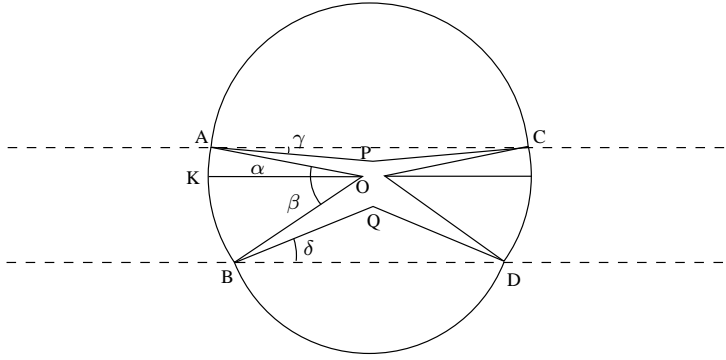} 
} 
\end{center}
\caption{A four-arcs configuration must be horizontally symmetric.}
\label{fig4nonsymm} 
\end{figure}

It will be convenient to set
$$
\al:=\widehat{AOK},\quad\be:=\widehat{BOK},\quad\g:=\widehat{CAP},\quad\de:=\widehat{DBQ}
$$
and $R:=AP=AO=BO=BQ$ is the common radius of the arcs. Notice that $R$ and $\al$ uniquely determine the configuration; indeed, since the height of the strip is 2,
\begin{equation}\label{eqperbeta}
R(\sin\al+\sin\be)=2,\quad\text{i.e.}\quad\sin\be=\tfrac2R-\sin\al.
\end{equation}
Moreover, $\al,\g$ are also the angles formed by the vertical line passing through $A$ and the two arcs meeting at $A$; similarly for $\be,\de$ at $B$. Thus the Snell law
\begin{equation}\label{eqpergammadelta}
\frac{\sin\al}{\la}=\sin\g,\qquad\frac{\sin\be}{\la}=\sin\de
\end{equation}
holds. When $\al$ is negative (i.e. $O$ lies above the line $\{y=1\}$) computations are the same.

Recall that the configuration must be symmetric with respect to a vertical line, which must contain $P$ and $Q$. 
Equivalently, $P$ and $Q$ must belong to such axis: they lie ``one above the other''. Equivalently, the first component of the oriented segment $OP=OA-PA$ is equal to the one of $OQ=OB-QB$, i.e. $R(\cos\al-\cos\g)=R(\cos\be-\cos\de)$. Taking into account \eqref{eqperbeta} and \eqref{eqpergammadelta}, we can exploit this condition to get
$$
\sqrt{1-s^2}-\sqrt{1-\tfrac{s^2}{\la^2}}=\sqrt{1-(\tfrac2R-s)^2}-\sqrt{1-\tfrac{1}{\la^2}(\tfrac2R-s)^2},
$$
where we have set $s:=\sin\al$. Let us study the function $\p:[-1,1]\to\R$ defined by $\p(s):=\sqrt{1-s^2}-\sqrt{1-\tfrac{s^2}{\la^2}}$. It is an even map and
$$
\p'(s)=-\frac{s}{\sqrt{1-s^2}}+\frac{s}{\la\sqrt{\la^2-s^2}}=s\:\frac{\sqrt{1-s^2}-\la\sqrt{\la^2-s^2}}{\la\sqrt{1-s^2}\sqrt{\la^2-s^2}}
$$
is negative for $s>0$ and positive for $s<0$. Thus $\p$ is even, increasing in $[-1,0[$ and decreasing in $]0,1]$. In order to have $\p(s)=\p\big( \tfrac2R-s\big)$ it necessarily holds that $s=\pm\big( \tfrac2R-s\big)$, whence $s=1/R$. This condition corresponds to $\al=\be=\arcsin\tfrac 1R$, which immediately implies symmetry with respect to $\{y=0\}$.

{\em Step 3.} 
We have to discard sets $E$ as in Figure \ref{fig1or2tangarcs} bounded by: one arc in $\{y>1\}$ and/or one in $\{y<-1\}$ meeting the strip with angle $\arccos\tfrac1\la$; one or two segments on $\{y=1\}$ and one/two on $\{y=-1\}$ (any of them can be a single point); two arcs in the strip meeting tangentially the segments. Notice that the radius of any of the four arcs is 1, since the inner ones are tangent to 
$\{|y|=1\}$. 

\begin{figure}[htbp]
\begin{center} 
\includegraphics[width=10cm]{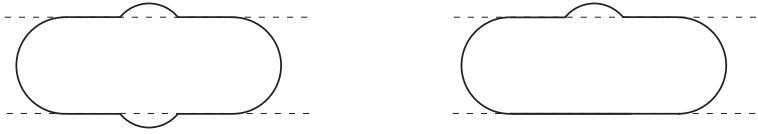}
\end{center}
\caption{A not minimising configuration.}
\label{fig1or2tangarcs} 
\end{figure}


Set $\al_\la:=\arccos\tfrac1\la$. Cut from $E$ the region enclosed by one of the arcs outside the strip. In this way we lose area $(\al_\la-\sin\al_\la\cos\al_\la)\la$. Since the arc disappears, but a segment is created, we lose perimeter $2\la\al_\la-2\sin\al_\la$. We restore area in the following way: ``enlarge'' $E$ horizontally until area is restored, for example by translating the right arc inside the strip on the right. Two segments (one on $\{y=1\}$ and one on $\{y=-1\}$) are created; their length $L$ must satisfy the restoring-area condition
$$
2L=(\al_\la-\sin\al_\la\cos\al_\la)\la=\la\al_\la-\sin\al_\la\,.
$$
This operation produces a growth of perimeter equal to $2L$. To show that we have decreased perimeter we must check that
$$
\la\al_\la-\sin\al_\la<2\la\al_\la-2\sin\al_\la
$$
or equivalently that 
$$
\sin\al_\la<\la\al_\la, 
$$
which is always satisfied.
\end{proof}

Next result assures that any isoperimetric set in this setting must be vertical reflective symmetric. 

\begin{prop}
\label{prop:simetrico}
Let $E$ be an isoperimetric region for the strip density. 
Then, $E$ has vertical reflective symmetry. 
\end{prop}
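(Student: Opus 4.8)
The plan is to prove symmetry by a reflection (symmetrisation) argument across the vertical line bisecting the weighted area of $E$, and then to upgrade "a symmetrised competitor is isoperimetric" to "$E$ itself is symmetric" by a rigidity argument.

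First I would fix the geometric picture. By Proposition~\ref{WSteiner} applied with $N=2$, almost every horizontal slice of $E$ is a round ball, i.e. an interval, so (up to a null set) $E$ is horizontally convex; since $E$ is connected by Proposition~\ref{PropExistN2}, its projection onto the $y$-axis is an interval. Moreover, as recalled just before Lemma~\ref{lemstripposs}, the constant generalised curvature condition \eqref{teoConstCurv}, the Snell law \eqref{eqfresnel}, Proposition~\ref{CorTangential} and standard regularity force $\partial E$ to be a finite union of circular arcs of one common radius $R$ together with line segments contained in $\{|y|=1\}$.

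Next, let $\ell_0=\{x=c_0\}$ be the vertical line with $|E\cap\{x<c_0\}|_f=|E\cap\{x>c_0\}|_f=\tfrac12|E|_f$; it exists since $c\mapsto|E\cap\{x<c\}|_f$ is continuous, and the slice $E\cap\ell_0$ has positive length (otherwise the doubled sets below would fail to be connected). Write $\sigma$ for the reflection across $\ell_0$ and $E_\pm=E\cap\{\pm(x-c_0)>0\}$, and set $F_\pm=E_\pm\cup\sigma(E_\pm)$. Since $f$ depends only on $y$, $\sigma$ preserves both $|\cdot|_f$ and $P_f$; hence $|F_\pm|_f=|E|_f$, and — because $\partial^*E$, being a finite union of arcs and segments, meets $\ell_0$ in only finitely many points, while the seam $E\cap\ell_0$ lies in the interior of $F_\pm$ — one gets $P_f(F_+)+P_f(F_-)=2P_f(E)$. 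As $E$ is isoperimetric, $P_f(F_\pm)\ge P_f(E)$, so in fact $P_f(F_+)=P_f(F_-)=P_f(E)$: both $F_\pm$ are isoperimetric, are symmetric about $\ell_0$, and (being isoperimetric) are connected, which forces $E_\pm$ to be connected as well.

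It remains to deduce $E_-=\sigma(E_+)$ up to a null set, and this is the delicate point. The key observation is that $F_-$ is a genuine isoperimetric set, hence $C^{1,1}$ off $\{|y|=1\}$ and satisfying Snell's law on it, and its boundary is obtained by doubling $\partial^*E\cap\{x<c_0\}$; therefore at every point of $\partial^*E\cap\ell_0$ lying off $\{|y|=1\}$ the curve $\partial E$ must cross $\ell_0$ orthogonally, i.e. the circular arc of $\partial E$ through such a point has its centre on $\ell_0$. Since a circle of radius $R$ centred on $\ell_0$ is $\sigma$-invariant, this shows that $E$ and $\sigma(E)$ coincide near each such point, and — handling the at most two points $(c_0,\pm1)$ via the regular-trace/Snell structure of $F_-$ and Propositions~\ref{teofresnel}--\ref{CorTangential} — near the whole segment $\ell_0\cap\overline E$. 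Finally, walking along the closed curve $\partial E$ from this neighbourhood, each $R$-arc continues uniquely until it meets $\{|y|=1\}$, where the incoming direction together with Snell's law and the sign constraints of Propositions~\ref{teofresnel}--\ref{CorTangential} pins down the continuation; performing the same walk for $\partial\sigma(E)$, which agrees with $\partial E$ initially, yields $\partial E=\partial\sigma(E)$, hence $E=\sigma(E)$.

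The main obstacle is exactly this last rigidity step: the reflection construction by itself only produces a symmetric isoperimetric \emph{competitor} $F_\pm$, and one must genuinely exclude the possibility that $E$ is an asymmetric set whose two halves merely happen to have equal weighted area and equal weighted perimeter. Extracting, from the $C^{1,1}$-regularity of $F_\pm$, that $\partial E$ meets $\ell_0$ orthogonally, and then controlling the behaviour at the jump lines $\{|y|=1\}$ where $\partial E$ may have corners, is where the real work lies.
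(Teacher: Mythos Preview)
Your approach is genuinely different from the paper's. The paper does not bisect by a vertical line and reflect; instead it rides on the Schwarz symmetrisation already obtained in Proposition~\ref{WSteiner}: the symmetrised set $E^*$ is itself isoperimetric and vertically symmetric, hence by Lemma~\ref{lemstripposs} it is one of the four explicit types (i)--(iv). The paper then argues, type by type, that any isoperimetric $E$ having the same horizontal slice--length function as $E^*$ must itself be symmetric: e.g.\ if $E^*$ is of type (iii) or (iv), the single arc of $\partial E$ in $\{y>1\}$ must be continued on each side by an inner arc of the same radius, and Snell forces equal incidence angles, which already yields the symmetry. Remarks~\ref{rem:symm1}--\ref{rem:symm2} record two further short alternatives in the same spirit. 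This route trades your ``rigidity at the seam'' problem for the (already proved) classification lemma.

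Your steps through the orthogonal--crossing conclusion are sound: the doublings $F_\pm$ are isoperimetric, hence $C^{1,1}$ off $\{|y|=1\}$, and this forces the tangent to $\partial E$ at any point of $\ell_0\setminus\{|y|=1\}$ to be horizontal, so the arc through such a point has its centre on $\ell_0$ and is $\sigma$--invariant. The gap is in the unique--continuation ``walk''. Your claim that Snell's law together with the sign constraints of Propositions~\ref{teofresnel}--\ref{CorTangential} pins down the continuation fails at \emph{tangential} transitions: if an inner arc meets $\{y=1\}$ tangentially at $q$, then \emph{both} a segment on $\{y=1\}$ and an outer arc leaving $q$ at the critical angle $\arccos(1/\la)$ are compatible with those propositions (the weighted conormal sum is parallel to $\nu_\Gamma$ with the correct sign in either case). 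Such mixed ``tangential inner arc plus critical--angle outer arc'' junctions are precisely the configurations that Lemma~\ref{lemstripposs} has to examine and rule out in its Steps~1 and~3 --- they are not excluded by Propositions~\ref{teofresnel}--\ref{CorTangential} alone. There is also the separate situation $\partial E\cap\ell_0\subset\{|y|=1\}$ (which does occur, e.g.\ for type~(ii) competitors): then your orthogonality step is vacuous and the treatment you allude to at $(c_0,\pm1)$ would have to be spelled out. In short, the reflection scheme is a legitimate alternative, but closing the rigidity step seems to require essentially the case analysis of Lemma~\ref{lemstripposs}; once that lemma is on the table, the paper's direct type--by--type argument is both shorter and complete.
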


\begin{proof}
Consider the symmetrised set $E^*$ of $E$, 
which will be of one of the types described in Lemma~\ref{lemstripposs}. 
If $E^*$ is of type (i), then $E$ is entirely contained in the strip, 
and it follows trivially that it must be a ball, thus symmetric. 
If $E^*$ is of type (ii), then the part of $\partial E$ contained in $\{|y|=1\}$ 
consists of a segment in $\{y=1\}$ and another one in $\{y=-1\}$, both of equal length. 
The upper segment must be continued left and right by two tangential arcs of the same radius 
(since curvature is constant and Snell law holds). This already gives the symmetry of $E$ 
(we are also using that horizontal slices of $E$ are intervals).
Finally, if $E^*$ is of type (iii) or (iv), we proceed analogously: 
the part of $\partial E$ contained in $\{y>1\}$ is an arc of a circle, 
which must be continued by two arcs of the same radius inside the strip, 
forming identical angles 
(note that segments in $\{y=1\}$ are not allowed, since $E^*$ have not such segments); 
this yields the desired symmetry.
\end{proof}

\begin{rmk}\label{rem:symm1}
The vertical reflective symmetry of an isoperimetric set $E$ 
which produces a set $E^*$ of type (ii) after Schwarz symmetrisation 
also follows from the following argument: 
call $L>0$ the length of the top slice, and remove an interval of length $L$ 
from the right endpoint of each horizontal slice of $E$. The resulting set 
has the same volume and perimeter that the biggest ball inside the strip, 
so it must coincide with that ball. 
Adding now the removed intervals, we get that the original set $E$ is symmetric. 
On the other hand, in case that $E^*$ consists of a set of type (iii) or (iv), the portion 
of $\partial E$ in $\{y>1\}$ must be optimal for the free boundary problem (fixing the 
corresponding segment on $\{y=1\}$), so it must be an arc of a circle; 
for case (iv), an identical arc will appear in $\{y<-1\}$. 
As pieces of $\partial E$ contained in \{y=1\} are not allowed in these cases, 
the symmetry of $E$ follows since inside the strip we must also have the optimal 
configuration for the fixed boundary segments. 
\end{rmk}

\begin{rmk}\label{rem:symm2}
Recall that the horizontal slices of $E$ are intervals in view of Proposition \ref{WSteiner}. 
Consider the symmetrised set $E^*$, 
which will be one of those sets described in Lemma \ref{lemstripposs};
it is clear that the length of the horizontal slices of $E^*$ 
is a continuous positive function of $y$, 
and so the same property holds for $E$. 
This also suffices to conclude, by using \cite[Th.1.3]{CCF}, that $E$ has vertical symmetry.
\end{rmk}

In view of Lemma~\ref{lemstripposs} and Proposition~\ref{prop:simetrico}, 
we have completely classified the isoperimetric candidates for the strip density. 
We will now analyse numerically their behaviour. Our computations below lead to 
the following Conjecture \ref{conj:strip}. 
Theorem \ref{teostrip} proves everything except the elimination of sets of type (iv).

\begin{conjecture}
\label{conj:strip}
Given $\rr^2$ with the strip density, $1$ in the strip $\{|y|\leq 1\}$ and $\lambda>1$ outside,
there exists a value $v_0>\pi$ such that the isoperimetric sets are:
\begin{itemize}
\item[(a)] balls of type (i) for areas less than $\pi$;
\item[(b)] sets of type (ii) for values of the area in $[\pi,v_0]$;
\item[(c)] sets of type (iii) for areas greater than $v_0$.
\end{itemize}  
\end{conjecture}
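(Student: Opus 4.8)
The plan is to turn the problem into a finite comparison between the explicit candidate profiles. By Proposition~\ref{PropExistN2} an isoperimetric set exists for every volume, and by Lemma~\ref{lemstripposs} together with Proposition~\ref{prop:simetrico} it must be of one of the types (i)--(iv); hence it suffices, for each volume $v>0$, to minimise the weighted perimeter over these types. First I would record the profiles. A type (i) set is a disc of radius $r\le1$ inside the strip, so $v=\pi r^2\le\pi$ and $P_f=2\sqrt{\pi v}$. A type (ii) set is a $2\times\ell$ rectangle capped by two unit semicircles and lying inside the strip, so $v=\pi+2\ell$ and, crucially, $P_f=2\pi+2\ell=v+\pi$, valid for all $v\ge\pi$. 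Types (iii) and (iv) are one-parameter families indexed by the common radius $R\ge1$ of the arcs: in (iii), tangency to $\{y=-1\}$ and the Snell law~\eqref{eqfresnel} at $\{y=1\}$ fix all the centres (the bottom segment having the length computed in Step~1 of the proof of Lemma~\ref{lemstripposs}), so $v_{(iii)}(R)$ and $P_{(iii)}(R)$ are explicit elementary functions of $R$ and $\la$; in (iv) the horizontal symmetry from Step~2 of that proof forces $\sin\al=1/R$, again giving explicit $v_{(iv)}(R)$, $P_{(iv)}(R)$.

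From these formulas one checks at once that types (ii), (iii), (iv) all have $v\ge\pi$, with equality only for the unit disc, and that (iii), (iv) in fact satisfy $v>\pi$ throughout (for instance the degenerate, zero-segment member of (iii) is the disc of radius $2$ centred on $\{y=1\}$, of volume $2\pi(1+\la)$). Therefore, for $v<\pi$ the only candidate is the type (i) disc, which is consequently the unique isoperimetric set; and since $2\sqrt{\pi v}=v+\pi-(\sqrt v-\sqrt\pi)^2\le v+\pi$ with equality iff $v=\pi$, at $v=\pi$ the (coinciding) type (i)/(ii) unit disc wins. This proves part (a) and the left endpoint of part (b).

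It remains to compare types (ii), (iii), (iv) for $v\ge\pi$. Since $P_{(ii)}(v)=v+\pi$ is affine, the competition between (ii) and (iii) reduces to the sign of the single real function $g(R):=P_{(iii)}(R)-v_{(iii)}(R)-\pi$: the goal is to show that $g$ is positive for small $R$ and negative for large $R$, with exactly one zero, and that $v_{(iii)}$ is monotone on the relevant range, so that the two profiles cross exactly once, at a value $v_0>\pi$, with (ii) strictly better for $\pi\le v<v_0$ and (iii) strictly better for $v>v_0$. This is a one-variable calculus task — estimating $g'$, controlling the monotonicity of $v_{(iii)}$, and pinning down the unique crossing — elementary but delicate, and is exactly what is carried out in Theorem~\ref{teostrip} (with the numerics mentioned earlier in the section).

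The genuine obstacle — and the one point left open here — is the elimination of type (iv): one has to show $P_{(iv)}(v)>\min\{P_{(ii)}(v),P_{(iii)}(v)\}$ for every $v$ at which a type (iv) candidate exists. My first attempt would be a surgery in the spirit of Steps~1--3 of the proof of Lemma~\ref{lemstripposs}: delete the circular cap of a type (iv) set lying in $\{y<-1\}$, replacing its arc by the chord on $\{y=-1\}$ (gaining $\la\cdot(\text{arc length})-(\text{chord length})$ of weighted perimeter and losing $\la\cdot(\text{cap area})$ of weighted volume), then restore the lost volume by a horizontal stretch inside the strip (which costs the same amount of perimeter as volume it adds), and check that the net change is negative — thereby producing a set of type (ii) or (iii) with no larger perimeter, contradicting optimality. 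For $R=1$ this reduces precisely to the inequality $\sin(\arccos\tfrac1\la)<\la\arccos\tfrac1\la$ used in Step~3 and closes; but for large $R$ the discarded cap has area of order $R^2$ while its bounding arc and chord have length of order $R$, so the restoring step costs \emph{more} perimeter than is saved, and the argument fails. Absent a cleverer surgery, one is reduced to a direct, two-sided comparison of the explicit curve $P_{(iv)}$ against $P_{(ii)}$ and $P_{(iii)}$, inheriting all the delicacy of the (ii)/(iii) analysis; this is the step that Theorem~\ref{teostrip} does not settle, so that the conjecture is proved there only up to the exclusion of type (iv).
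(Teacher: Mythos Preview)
Your overall architecture --- reduce via Proposition~\ref{PropExistN2}, Lemma~\ref{lemstripposs} and Proposition~\ref{prop:simetrico} to a finite comparison among types (i)--(iv), dispatch (a) by checking that types (ii)--(iv) all carry volume at least $\pi$, and then compare (ii), (iii), (iv) for $v\ge\pi$ --- is exactly the paper's strategy, and you are right that the elimination of type~(iv) is the one genuinely open point. But two of your intermediate steps diverge from what the paper actually does, and in each case the paper's device is cleaner.

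For the (ii) versus (iii) comparison you propose analysing $g(R)=P_{(iii)}(R)-v_{(iii)}(R)-\pi$ directly and claim that this is what Theorem~\ref{teostrip} carries out. It is not. The paper instead uses the general fact that the slope of the graph of perimeter against enclosed area equals the (generalised) curvature of the boundary. Type~(ii) sets have curvature exactly~$1$, while every type~(iii) set has curvature strictly less than~$1$ (see~\eqref{alphabetabis}); hence the (ii)-graph always has larger slope than the (iii)-graph at points of equal area, so they can cross at most once, and Remark~\ref{re:re} provides a crossing. Your direct route would have to contend with the non-monotonicity of $v_{(iii)}$ noted just after Remark~\ref{re:re} (area and perimeter first \emph{decrease} as curvature drops from~$1$), which makes the one-variable calculus genuinely unpleasant; the curvature-slope argument sidesteps this entirely.

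On type~(iv), your proposed surgery (excise the lower cap, cap by its chord, restore volume by horizontal stretch) is not the paper's, and you correctly see it fails for large~$R$. The paper brings two different tools to bear. First, a blow-down argument: rescale type~(iv) minimisers of large volume so that the strip becomes a line, and show the limit would have to be isoperimetric for the ``line density'' of Theorem~\ref{theoline} while having positive mass on both sides of the axis, which is impossible. This yields the existence of $v_1$ in Theorem~\ref{teostrip}, i.e.\ type~(iv) cannot persist for arbitrarily large volume. Second, for $\lambda\ge 4/\pi$, a different surgery: replace the two symmetric outer caps by a \emph{single} cap of the same chord length enclosing the combined area, and compare via the convexity/concavity properties of the $\arc$ function. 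Neither of these ideas appears in your sketch, and without the blow-down you cannot even reach the partial statement (d) of Theorem~\ref{teostrip}.
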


It is not difficult to compare the perimeters 
of the above candidates (i), (ii), (iii) and (iv), for equal volumes. 
For a ball in the strip enclosing area $v\leq\pi$, 
its perimeter is given by $P_{i}(v)=2\,\sqrt{\pi\,v}$; 
and a set of type (ii) enclosing area $v\geq\pi$ has perimeter given by $P_{ii}(v)=v+\pi$. 

On the other hand, by using the Snell law and, 
for instance, computations in \cite[Prop. 2.1]{double}, 
sets of type (iii) can be parametrized in terms of the 
corresponding generalised mean curvature $h$. Thus, denoting 
by 
\begin{equation}\label{alphabetabis}
\beta:=\beta(h)=\pi-2\arcsin(\sqrt{h}),\quad 
\alpha:=\alpha(h)=\arccos\bigg(\frac{\cos\beta}{\lambda}\bigg),
\end{equation}
it can be checked that the perimeter and the area enclosed by such a set are given by
\begin{align*}
P_{iii}(h)=&\frac{2\,\lambda\,\alpha}{h}+\frac{4 \arcsin(\sqrt{h})}{h}+\frac{2\sin\alpha}{h}-\frac{4\,\sqrt{h(1-h)}}{h}, 
\\
A_{iii}(h)=\,&\frac{\lambda\,\alpha-\sin\alpha\:\cos\beta}{h^2}+
2\,\frac{\arcsin(\sqrt{h})-\sqrt{h(1-h)}}{h^2}\,
+\frac{4\,\sin\alpha}{h} -\frac{4\,\sqrt{1-h}}{\sqrt{h}}.
\end{align*}
We point out that, when $h$ tends to zero, the area enclosed $A_{iii}(h)$ increases, 
while for $h$ close to one, $A_{iii}(h)$ achieves its minimum value. 

In a similar way, we can express the perimeter and the area enclosed by a set of type (iv) in terms of the 
mean curvature $h$. By denoting by 
\begin{equation}
\label{alphabeta}
\hat\beta:=\hat\beta(h)=\frac{\pi}{2}-\arcsin(h),\quad \hat\alpha:=\hat\alpha(h)=\arccos\bigg(\frac{\cos\hat\beta}{\lambda}\bigg),
\end{equation}
we have that necessarily $0<h\leq 1$, 
and that the perimeter and area enclosed by a set of type (iv) with curvature $h$ are equal to
\begin{align*}
P_{iv}(h)&=\frac{4\,\lambda\,\hat\alpha}{h} + \frac{4\,\arcsin(h)}{h}, 
\\
A_{iv}(h)=\frac{4\,\sin\hat\alpha}{h} + 2\,&\frac{\lambda\hat\alpha-\sin\hat\alpha\:\cos\hat\beta}{h^2\,} + 2\,\frac{\arcsin(h)-h\sqrt{1-h^2}}{h^2}. 
\end{align*}


Figure~\ref{fig:perfiles} shows several graphs where the perimeters of these types of sets are displayed, for 
different values of $\lambda$. 

\begin{figure}[htp]
\centering{
\subfigure[$\lambda=1.1$]{\label{conf1}\includegraphics[width=0.485\textwidth]{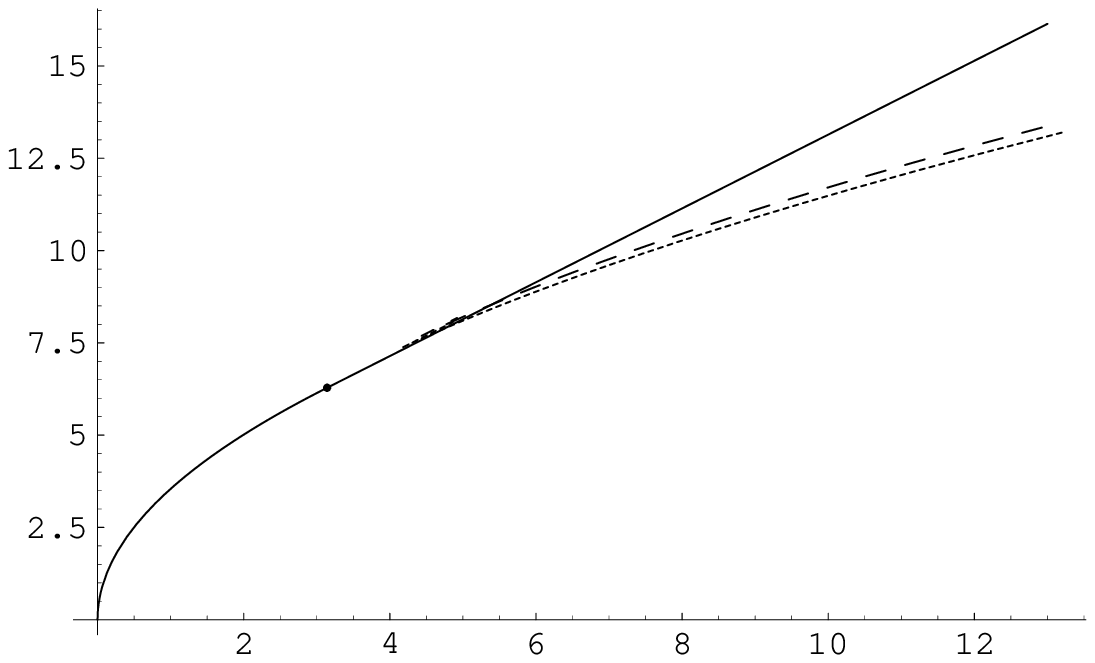}}
\hspace{0.01\textwidth}
\subfigure[$\lambda=2$]{\label{conf3}\includegraphics[width=0.485\textwidth]{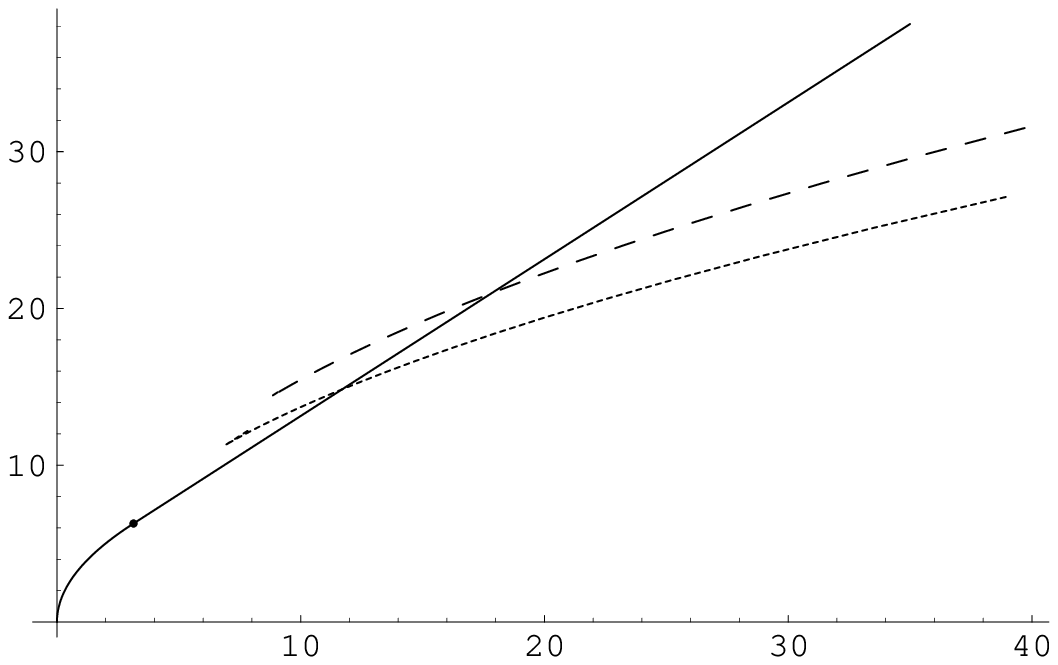}}
}
\\
\centering{
\subfigure[$\lambda=8$]{\label{conf6}\includegraphics[width=0.483\textwidth]{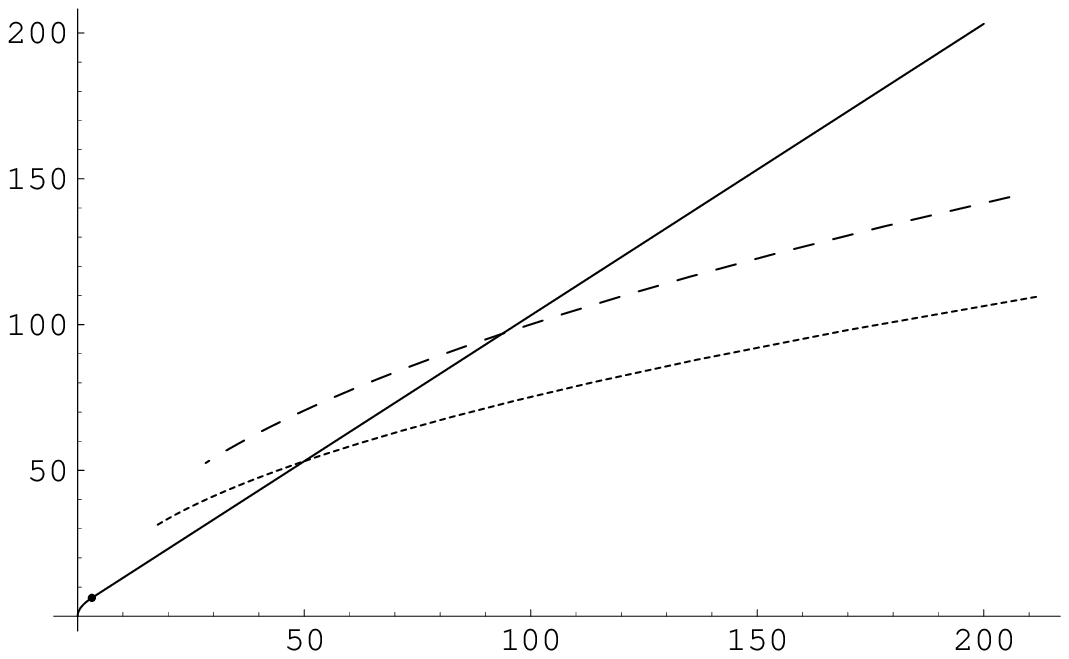}}
\hspace{0.01\textwidth}
\subfigure[$\lambda=1000$]{\label{conf7}\includegraphics[width=0.485\textwidth]{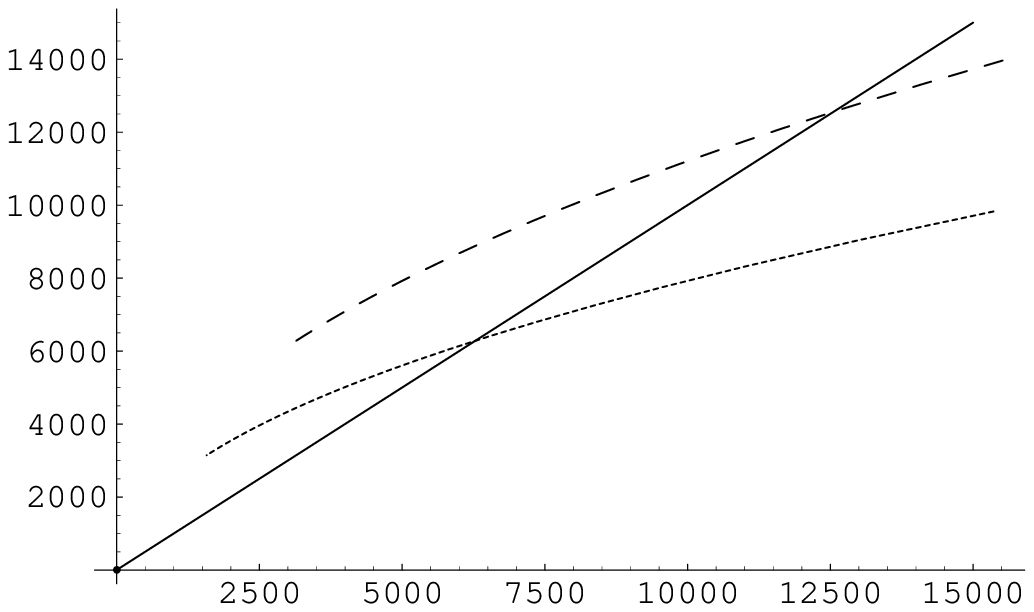}}
}
\caption{Comparison of perimeters in four different cases: the solid curve represents the transition between sets of 
type (i) and (ii), while the other dashed and dotted curves correspond to the perimeter of sets of type (iv) and (iii), 
respectively.}
\label{fig:perfiles}
\end{figure} 

For these and many others values of $\lambda$, we have obtained numerically the behaviour 
described in Conjecture~\ref{conj:strip}, as those graphs show. Moreover, it seems that
sets of type (iv) are always beaten by sets of type (iii), which are the 
solutions for large values of the area (see Theorem \ref{teostrip}). 
Thus, sets of type (iv) are not expected to be solutions. 

\begin{rmk} 
\label{re:re}
It is not difficult to justify that sets of type (ii) cannot be the 
solutions for large values of the area. Indeed, for any $\lambda>1$, 
and for curvature $h$ close enough to zero, we have $P_{iv}(h)<P_{ii}(A_f(h))$,  
which means that the set of type (iv) is better than the set of type (ii) for area $A_{iv}(h)$ 
(which shall be a large value of the area). 
In addition, the reverse inequality holds for values of $h$ close enough to one, and 
then sets of type (ii) are better for such (smaller) areas. 
We also have these same properties when considering type (iii) sets instead of type (iv) ones.
\end{rmk}

\begin{rmk}
Sets of type (iii) and (iv) possess an unexpected feature. They have curvature less than one but, for curvature slightly less, perimeter and area {\em decrease}. This can be noticed in Figure \ref{figreversing}, showing the graph of perimeter in terms of the enclosed area in case $\la=1.1$: the solid curve (the lower one) is associated to sets (iii), the dotted (upper one) to type (iv). Curvature is taken from 1 (corresponding to the ``left'' endpoint of each graph) to 0.8. Notice that initially (i.e. for curvature close to one) each graph ``goes left'', until a minimum (for both area and perimeter) is obtained in correspondence of the corner point of the graph.
\begin{figure}[htbp]
\begin{center} 
\includegraphics[width=11cm]{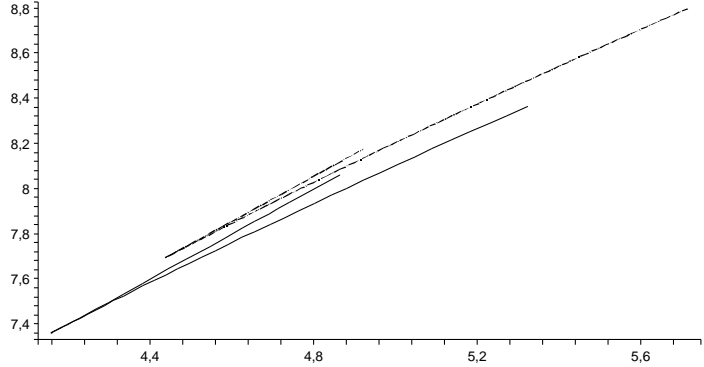}
\end{center}
\caption{When curvature goes from one to zero, the perimeter and area of sets (iii)-(iv) firstly decrease 
and then increase.}
\label{figreversing} 
\end{figure}
\end{rmk}

Even if without a complete result, in Theorem \ref{teostrip} we will characterise most of the isoperimetric sets for the strip density. First we have to consider the following ``line'' density setting, which we will see as a sort of ``blow-down'' of the strip density. Consider the density $g$ on $\R^2$ given by
\begin{equation}\label{linedensity}
g(x,y)=\left\{\begin{array}{ll}
\la & \text{if }y\neq 0\\
1 & \text{if }y=0\,.
\end{array}\right.
\end{equation}

\begin{theo}\label{theoline}
For the density in $\rr^2$ defined by $g$, isoperimetric sets exist for any given volume and are bounded 
by an arc meeting the horizontal axis with angle $\arccos \tfrac1\la$, and with the chord on the axis as in Figure \ref{figline}.
\begin{figure}[htbp]
\begin{center} 
\scalebox{1}{ 
\input{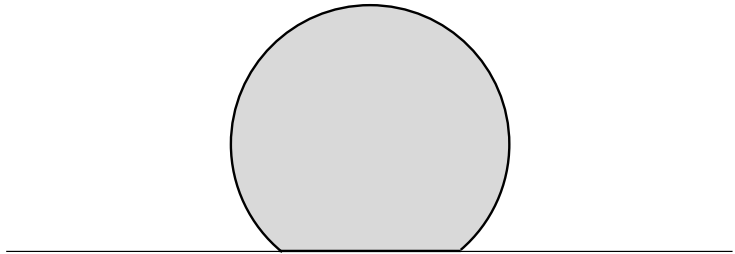} 
} 
\end{center}
\caption{Isoperimetric set for the line density.}
\label{figline} 
\end{figure}
\end{theo}
\begin{proof}
The existence of an isoperimetric set for any prescribed volume, as well as its connectedness, follows from a compactness argument similar to the one in Proposition \ref{PropExistN2}. As in Proposition \ref{WSteiner}, Schwarz symmetrisation with respect to the vertical axis ensures that an isoperimetric set symmetric with respect to such axis exists for any volume bound. We claim that an isoperimetric set with such a symmetry must be the one described in the statement; the fact that any isoperimetric set is symmetric with respect to the vertical axis easily follows by adapting one of the arguments of Proposition \ref{prop:simetrico} and Remarks \ref{rem:symm1} and \ref{rem:symm2}.

The regularity of the isoperimetric boundary ensures that for $\g_+:=\partial E\cap \{y>0\}$ one of the following holds:
\begin{itemize}
\item[($a_+$)] it consists of a complete circumference;
\item[($b_+$)] it consists of a single arc with chord a segment $\ell_+$ on the horizontal axis 
(possibly reducing to a single point);
\item[($c_+$)] it is empty.
\end{itemize}
The same possibilities, which we will call ($a_-$), ($b_-$) and ($c_-$), are given also for $\g_-:=\partial E\cap \{y<0\}$, with $\ell_-$ defined analogously. 
Taking into account the Snell law, our thesis is equivalent to proving that 
the isoperimetric set falls into one of the cases ($b_+$$c_-$) or ($c_+$$b_-$).

We can discard the cases ($a_+$$a_-$), ($a_+$$b_-$) and ($b_+$$a_-$): in fact, one could translate vertically the two components towards the origin until an ``irregular contact'' is obtained.

The case ($b_+$$b_-$) can be discarded too. First of all, the two segments $\ell_+$ and $\ell_-$ must coincide; otherwise (by vertical symmetry) one would be contained in the other, say $\ell_-\subsetneq\ell_+$. The arcs $\g_+$ and $\g_-$ meet the horizontal axis satisfying Snell law, and so one of them forms (with the same axis) an angle $\al:=\arccos\tfrac1\la$, and the other one will form an angle $\pi-\al$. This easily implies that the chords $\ell_+$ and $\ell_-$ have the same length, and so that our set is a round ball. In particular, perimeter and area are the same as the cases ($a_+$$c_-$) and ($a_-$$c_+$), which we are going to discard in a while.

The case ($c_+$$c_-$) is of no concern, as it corresponds to null volume. For sets $E$ falling into cases ($a_+$$c_-$) or ($a_-$$c_+$), i.e. when $E$ is a ball, the ratio $P_g(E)^2/A_g(E)$ is equal to $4\pi\la$. On the other hand, when $E$ falls into ($b_+$$c_-$) (the case ($c_+$$b_-$) is analogous) it consists of a circle minus the area enclosed by the chord $\ell_+$, the angle at the center determined by $\ell_+$ being equal to $2\al$ by Snell law. Easy computations yield that
\begin{align*}
\frac{P_g(E)^2}{A_g(E)}=\frac{4((\pi-\al)\la+\sin\al)^2}{\la(\pi-\al+\sin\al\cos\al)}&=\frac{4\la^2((\pi-\al)+\sin\al\cos\al)^2}{\la(\pi-\al+\sin\al\cos\al)} 
\\
&< 4\la(\pi-\al+\al)=4\pi\lambda, 
\end{align*}
where we have used that $0<\al=\arccos\tfrac1\la<\pi/2$. This concludes the proof.
\end{proof}

We also need to introduce the function $\arc(x)$ defined as the perimeter of a circular arc with unit chord and area $x$. Let us briefly recall some of its properties (see \cite[Section 15.5]{Mor08Geo}):
\begin{itemize}
\item $\arc$ is convex in $[0,\pi/8]$ and concave in $[\pi/8,+\infty[$;
\item $\arc(\pi/8)=\pi/2$ and $\arc'(\pi/8)=2$.
\end{itemize}
We stress that the boundary case $x=\pi/8$ corresponds to a semicircle of radius $1/2$. \vspace{3mm}

We now collect in the following theorem the partial results we are able to prove for this strip density.
\begin{theo}\label{teostrip} 
The isoperimetric sets for area $v>0$ for the strip density, defined by $1$ in the strip $\{|y|\leq1\}$ and $\la>1$ outside, are
\begin{itemize}
\item[(a)] balls of type (i) if $v\leq\pi$;
\item[(b)] sets of type (ii) if $\pi<v\leq v_0$;
\item[(c)] sets of type (iii) or (iv) if $v_0\leq v\leq v_1$;
\item[(d)] sets of type (iii) if $v\geq v_1$,
\end{itemize}
for suitable $v_1\geq v_0>\pi$. In particular, for area $v_0$ there are at least two different isoperimetric sets.

\noindent
Moreover, type (iv) never occurs for $\la\geq 4/\pi$, i.e. in this case one can take $v_0=v_1$.
\end{theo}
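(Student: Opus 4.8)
The plan is to reduce the statement to a comparison of four explicit one--parameter families of candidates. By Proposition~\ref{PropExistN2} an isoperimetric set exists for every $v>0$, and by Lemma~\ref{lemstripposs} together with Proposition~\ref{prop:simetrico} it must be, up to density--preserving isometries, one of the candidates (i)--(iv); so it is enough to decide, for each fixed $v$, which candidate of area $v$ has least perimeter. The relevant perimeter functions are $P_i(v)=2\sqrt{\pi v}$ for $0<v\le\pi$ (type (i)), $P_{ii}(v)=v+\pi$ for $v\ge\pi$ (type (ii)), and the functions $P_{iii}$, $P_{iv}$ parametrized by the mean curvature $h$ through the formulas attached to \eqref{alphabetabis} and \eqref{alphabeta}.

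First I would settle the small--volume range. Types (ii), (iii) and (iv) all force $E$ to touch both lines $\{y=\pm1\}$; a type (ii) set has area at least $\pi$, with equality only in the degenerate case in which it reduces to the inscribed unit disk, while one checks from the area formulas that $\inf_h A_{iii}(h)>\pi$ and $\inf_h A_{iv}(h)>\pi$. Hence for $v<\pi$ the unique candidate is a type (i) ball, proving (a), and at $v=\pi$ this ball coincides with the degenerate type (ii) set (consistently, $P_i(\pi)=2\pi=P_{ii}(\pi)$). For $v>\pi$ the ball is no longer admissible, so only (ii), (iii) and (iv) compete; on the interval $\pi<v<\min\{\inf_h A_{iii},\inf_h A_{iv}\}$ there is simply no type (iii) or (iv) competitor, so type (ii) is isoperimetric. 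This gives (b) on an interval $(\pi,v_0]$ with $v_0>\pi$.

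Next I would analyse the regime $h\to0^+$. Reading the leading terms off the formulas after \eqref{alphabetabis}--\eqref{alphabeta}, one finds $A_{iii}(h)\sim L_{iii}/h^2$, $P_{iii}(h)\sim 2L_{iii}/h$ with $L_{iii}=\la\arccos(-1/\la)+\sqrt{1-1/\la^2}$, and $A_{iv}(h)\sim L_{iv}/h^2$, $P_{iv}(h)\sim 2L_{iv}/h$ with $L_{iv}=\pi\la$; equivalently $P_{iii}(v)\sim 2\sqrt{L_{iii}\,v}$ and $P_{iv}(v)\sim 2\sqrt{L_{iv}\,v}$. Since $g(t):=\arccos(-t)+t\sqrt{1-t^2}$ is increasing on $[0,1]$ with $g(1)=\pi$, dividing the inequality $L_{iii}<L_{iv}$ by $\la$ (and setting $t=1/\la$) reduces it to $g(t)<\pi$, which holds for every $\la>1$. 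Therefore, for large $v$, both types (iii) and (iv) beat type (ii) (their perimeters being $o(v)$) and type (iii) strictly beats type (iv). Letting $v_0$ be the first area at which $\min\{P_{iii},P_{iv}\}$ drops to the level $v+\pi$, and $v_1$ the smallest number $\ge v_0$ such that $P_{iii}<P_{iv}$ for all $v>v_1$, one gets parts (b), (c) and (d), together with (at least) two isoperimetric sets at $v=v_0$ (a type (ii) set and a type (iii) or (iv) one). That (b), (c), (d) hold on genuine intervals requires showing that $v\mapsto(v+\pi)-\min\{P_{iii},P_{iv}\}(v)$ and $v\mapsto P_{iv}(v)-P_{iii}(v)$ each change sign only once; here the convexity/concavity of $\arc$ recalled before the theorem and the monotonicity in $h$ of $\al(h),\be(h),\hat\al(h),\hat\be(h)$ (hence of $A_{iii},P_{iii},A_{iv},P_{iv}$ away from the ``reversing'' point) are the natural tools.

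Finally, for $\la\ge 4/\pi$ I would prove the sharper fact that $P_{iii}(v)\le P_{iv}(v)$ on the whole range of areas realized by type (iv), so that type (iv) is never strictly optimal and one may take $v_1=v_0$. The plan is to compare, at equal area $v$, the two curvatures $h_{iii}(v)$ and $h_{iv}(v)$ and to reduce $P_{iv}-P_{iii}\ge 0$ to an explicit one--variable estimate, in which the constant $4/\pi$ appears as the exact threshold (the extremal case being near curvature one). This last inequality is the main obstacle: it genuinely fails for $1<\la<4/\pi$, which is precisely why the elimination of type (iv) can only be carried out conditionally and the theorem is stated as an ``almost complete'' result.
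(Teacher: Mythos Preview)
Your outline has the right architecture, and your asymptotic computation $L_{iii}<L_{iv}$ is correct and in fact equivalent to the paper's blow--down to the line density of Theorem~\ref{theoline}; either route yields a finite $v_1$ beyond which type (iv) is never optimal. However, two of the steps you defer are exactly where the work lies.

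First, for the interval structure of (b)--(c) you need that the curves $v\mapsto P_{ii}(v)=v+\pi$ and $v\mapsto P_{iii}(v)$ (resp.\ $P_{iv}(v)$) cross exactly once. You propose to extract this from ``convexity/concavity of $\arc$ and monotonicity'', but this is not carried out and is complicated by the reversing behaviour near $h=1$. The clean device you are missing is that the slope of each perimeter--area graph equals the generalised curvature of the boundary: type (ii) has curvature identically $1$, while types (iii) and (iv) have curvature $h<1$, so each of the latter graphs has slope everywhere below $1$ and can meet the line $P=v+\pi$ at most once. Combined with Remark~\ref{re:re} this gives unique crossings $a_0,a_1>\pi$ and $v_0=\min\{a_0,a_1\}$. (Incidentally, the inequalities $\inf A_{iii}>\pi$ and $\inf A_{iv}>\pi$ that you invoke for part (a) also require an argument; the paper supplies a geometric one by showing that the circular sectors associated with the arcs of a type (iii)/(iv) set have total central angle exceeding $2\pi$ and do not overlap.)

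Second, your plan for $\la\ge 4/\pi$ is misdirected. You propose to compare $P_{iii}$ and $P_{iv}$ at equal area and assert that $4/\pi$ is ``the exact threshold'' with failure for smaller $\la$; but the bound is not sharp (Conjecture~\ref{conj:strip} predicts type (iv) never occurs for any $\la>1$, and the Remark following the theorem confirms $4/\pi$ is not optimal), so no argument pinned to that threshold can be exact. The paper's proof is different in kind and does not go through type (iii) at all: starting from a type (iv) set with upper and lower caps of chord $L$ and Euclidean area $A$, one removes both caps and replaces them by a single cap of area $2A$ on the same upper chord together with a segment of length $L$ on $\{y=-1\}$. This competitor has the same weighted area, and the perimeter drops iff
\[
\la L\,\arc(2A/L^2)+L\;<\;2\la L\,\arc(A/L^2),
\qquad\text{i.e.}\qquad h(x):=2\,\arc(x)-\arc(2x)>\tfrac1\la,
\]
where $x=A/L^2$. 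Using that $\arc$ is convex on $[0,\pi/8]$ and concave beyond, one localises the minimum of $h$ to $x_{\min}\in(\pi/16,\pi/8)$ and obtains $h(x_{\min})>\pi/4\ge 1/\la$. Note that the competitor is not itself a type (iii) set, so you do not need (and should not attempt) the harder direct inequality $P_{iii}\le P_{iv}$.
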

\begin{proof}
Set $E$ an isoperimetric region for area $v$. 
To verify part (a) of the statement, we just need to show that any set 
of type (ii), (iii) or (iv) has area greater than $\pi$. This is clear for sets of type (ii).

For sets of type (iii) we observe that, if the three arcs have radius $R>2$, 
then the arc outside the strip is greater than the semicircle with same radius, 
and so the enclosed area is greater than the area of that semicircle (times the density $\la$), 
whence greater than $\la\pi R^2/2>\pi$. 
Otherwise we must have $1<R\leq 2$ and it is easy to see that 
the internal arcs meet $\{y=1\}$ in such a way that 
the angle between the segment $\{y=1\}\cap E$ 
and each arc inside the strip is greater than $\pi/2$. 
Using the Snell law it is not difficult to observe that 
the sum of the central angles associated with the three arcs is greater than $2\pi$; 
moreover, the corresponding sectors of circle do not overlap and are contained in $E$. 
This means that the area of $E$ is greater than that of a circle of radius $R$, 
hence greater than $\pi$.

For sets of type (iv) we can use a similar argument 
and obtain that the four central angles 
have sum greater than $2\pi$. 
Let us check that the four associated sectors do not overlap; 
in this way the area of the candidate is greater than 
that of the circle with same radius $R>1$ and are done.
It is sufficient that the sectors corresponding 
to the upper and lower arcs do not overlap, i.e., 
that the center of the upper arc has positive $y$-coordinate 
(and so the center of the lower arc has negative $y$-coordinate). 
Since the parameter $\hat\al$ in \eqref{alphabeta} 
denotes half of the central angle corresponding to the upper arc, 
the $y$-coordinate of its center equals $1-R\cos\hat\al=1-\tfrac 1\la>0$.

%
We claim that the graphs representing the perimeter in terms of the area enclosed, 
corresponding to sets of type (ii) and (iii), intersect only once (see the graphs of Figure \ref{fig:perfiles}).
Indeed, the curvature of the sets of type (iii) is always less than one 
(see \eqref{alphabetabis}), which is the curvature of any set of type (ii). 
Since curvature coincides with the \emph{slope} of those graphs, 
this implies 
that those graphs 
intersect at most once. 
Moreover, type (iii) does not exist for area $\pi$ and, by Remark~\ref{re:re}, 
beats type (ii) for large area. 
%
%
Therefore, there exists some $a_0>\pi$ such that sets of type (ii) are better than sets of type (iii) 
for areas less than $a_0$, while sets of type (iii) are better
for areas greater than $a_0$. 
The same reasoning can be done for sets of type (iv): sets of type (ii) are better than type (iv) ones
for areas less than $a_1$, the contrary for greater areas. Part (b) thus follows by noticing that any type-(i) set has area at most $\pi$; the value $v_0$ can be characterised by $v_0:=\min\{a_0,a_1\}>\pi$.



Statements (c) and (d) require more work. By contradiction, suppose there is a sequence of areas $v_j\to\infty$ such that the related isoperimetric sets $E_j$ are of type (iv). Let $R_j$ be the radius of the curvilinear components of $\partial E_j$; notice that each of the components in $\{y>1\}$ and in $\{y<-1\}$ meets the strip with an angle lying in $[\arccos\tfrac 1\la,\pi-\arccos\tfrac1\la]$. This forces each of these two arcs to bound a region of area at least $cR_j^2$ ($c>0$ depending only on $\la$), while the area of $E_j$ contained in the strip is of order $R_j$. In particular, for large $j$ a ``big'' portion (at least $Cv_j$, $C=C(\la)>0$) of the area of $E_j$ lies in the half-plane above the strip, and another ``big'' portion lies below the strip.

Let us perform the following ``blow-down'' operation: for any fixed $j$ we make a dilation of a factor $v_j^{-1/2}$ so that $E_j$ is mapped into a set of area 1. More precisely, we consider the ``thin strip'' density $f_j$ on $\R^2$ given by
$$
f_j(x,y)=\left\{\begin{array}{ll}
\la & \text{if }|y|>\tfrac{1}{\sqrt{v_j}}\\
1 & \text{otherwise}\,.
\end{array}\right.
$$
Clarly $f_j\to g$ pointwise, where $g$ is the line density defined in \eqref{linedensity}, and the sets $F_j:=E_j/\sqrt{v_j}$ are isoperimetric and of unit weighted area. Being connected they satisfy the diameter estimate \eqref{estdiamper}. Up to horizontal translations, we can therefore suppose that they are contained in a big ball $B$, and a compactness argument ensures that a subsequence converges in $L^1$ to a certain set $F$. This means that
$$
A_{f_j}(F_j)\to A_g(F)=1,
$$
where $g$ is the function defined by \eqref{linedensity};  
this is because $g$ and $f_j$ differ only on the thin strip, whose intersection with the ball $B$ has area going to 0. Semicontinuity of the perimeter and the inequality $f_j\geq g$ imply
$$
P_g(F)\leq\liminf_{j\to\infty} P_g(F_j)\leq\liminf_{j\to\infty} P_{f_j}(F_j).
$$
It follows that $F$ has to be isoperimetric for the density $g$; in fact, were another unit area set $F'$ isoperimetrically better than $F$ we would obtain
$$
\frac{P_g(F')^2}{A_g(F')}+\ep < \frac{P_g(F)^2}{A_g(F)}\leq \liminf_{j\to\infty}\frac{P_{f_j}(F_j)^2}{A_{f_j}(F_j)}
$$
for some positive $\ep$, and so $F_j$ could not be isoperimetric (for $f_j$) for large $j$.

We will reach a contradiction if we show that $F$ cannot be one of the isoperimetric sets described in Theorem \ref{theoline}. In fact, for large $j$ the part of $F_j$ lying above the ``thin strip'' has area at least $C>0$, and the same holds for the part below it. This must happen also for $F$, and so it cannot coincide with one of the sets in Theorem \ref{theoline}.

It remains to show that for $\la\geq4/\pi$, case (iv) can never occur. Consider in fact any such set; let $L$ be the length of the chords of the upper and bottom caps and $A$ be the (Euclidean) area enclosed by each of them. Replace these two caps with a segment of length $L$ on $\{y=-1\}$ and a cap in $\{y>1\}$, enclosing Euclidean area $2A$ and lying upon the same chord as the removed upper cap. To prove that we have decreased perimeter we have to show that
$$
\la L\, \arc(2A/L^2) + L <2\la L\, \arc(A/L^2)\,.
$$
Setting $x:=A/L^2$, this is equivalent to show that
$$
h(x):=2\arc(x)-\arc(2x)>1/\la.
$$
Since $h'(x)=2\arc'(x)-2\arc'(2x)$, the convexity/concavity properties of $\arc$ ensure that $h'(x)<0$ for $x\leq \pi/16$ and $h'(x)>0$ for $x\geq \pi/8$. Therefore the minimum of $h$ is attained at some $x_{\rm min}\in]\pi/16,\pi/8[$. Since $\arc$ is convex in $[0,\pi/8]$ we have
$$
\arc(x_{\rm min})>\arc(\pi/8)+\arc'(\pi/8)(x_{\rm min}-\pi/8)=\pi/4+2x_{\rm min}
$$
while concavity in $[\pi/8,+\infty[$ yields
$$
\arc(2x_{\rm min})<\arc(\pi/8)+\arc'(\pi/8)(2x_{\rm min}-\pi/8)=\pi/4+4x_{\rm min}\,.
$$
The last two equations imply that $h(x_{\rm min})>\pi/4\geq 1/\la$ as desired.
\end{proof}

\begin{rmk}
The bound $4/\pi$ in the last assertion of Theorem \ref{teostrip} is not optimal. In fact, a closer examination
of the inequalities we have used leads to $h(x_{\rm min})=k>\pi/4$, and actually the statement holds for $\la> 1/k$. Unfortunately, since $h(0)=1$ one has $k< 1$: in other words, our argument of ``substituting upper and bottom caps with a unique one'' fails for small values of $\la$.
\end{rmk}

\subsection{The ball}\label{SecBalls}
In this section we denote by $f$ the ``ball'' density on $\R^2$ taking values
$\la\in\R\setminus\{1\}$ on the open ball $B=B(0,1)$ and 1 on $\R^2\setminus \overline B$. The value of $f$ on $\partial B$ is $\min\{1,\la\}$ in order to guarantee its lower--semicontinuity. We begin with an existence result for the isoperimetric problem.

\begin{theo}
For the ball density, isoperimetric sets exist for any given area.
\end{theo}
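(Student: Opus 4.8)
The plan is to run a concentration--compactness argument tailored to two features of the ball density $f$: it is not translation invariant (the origin is a distinguished point, so a minimising sequence cannot be recentred) and it equals $1$ outside the compact set $\overline{B(0,1)}$, so the isoperimetric problem is asymptotically Euclidean. Set $I(v):=\inf\{P_f(E):|E|_f=v\}$ and $\eta:=\min\{1,\la\}>0$. First I would note that $I(v)\le 2\sqrt{\pi v}<+\infty$, by taking a Euclidean disk of area $v$ far from $\overline{B(0,1)}$. Then fix a minimising sequence $(E_h)$ of smooth sets, $|E_h|_f=v$, $P_f(E_h)\to I(v)$. Since $f\ge\eta$ the Euclidean perimeters $P(E_h)$ are uniformly bounded, so $\{\chi_{E_h}\}$ is bounded in $BV(B_R)$ for every $R$; a diagonal extraction gives a subsequence with $E_h\to E$ in $L^1_{\rm loc}(\R^2)$. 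Then $|E|_f\le v$ and, by lower semicontinuity of the weighted perimeter, $P_f(E;B_R)\le I(v)$ for every $R$, hence $P_f(E)\le I(v)<+\infty$. Write $v_\infty:=v-|E|_f\ge0$ for the mass escaping to infinity.

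The heart of the proof is the inequality $I(v)\ge P_f(E)+2\sqrt{\pi v_\infty}$. Fix $\ep>0$ and choose $R>1$ so large that $P_f(E;B_R)>P_f(E)-\ep$, $|E\cap B_R|_f>|E|_f-\ep$ and $v/(\eta R)<\ep$. For each $h$, since $\int_R^{2R}\Haus^1(E_h\cap\partial B_\rho)\,d\rho=|E_h\cap(B_{2R}\setminus\overline{B_R})|\le v/\eta$, we may pick $\rho_h\in(R,2R)$ with $\Haus^1(E_h\cap\partial B_{\rho_h})<\ep$. Because $\rho_h>1$ we have $f\equiv 1$ on $\{|x|\ge\rho_h\}$, and the Euclidean perimeter of $E_h\setminus\overline{B_{\rho_h}}$ exceeds $P_f(E_h;\{|x|>\rho_h\})$ by at most $\Haus^1(E_h\cap\partial B_{\rho_h})<\ep$; hence, by the Euclidean isoperimetric inequality,
$$
P_f(E_h)\ \ge\ P_f(E_h;B_R)+P_f(E_h;\{|x|>\rho_h\})\ \ge\ P_f(E_h;B_R)+2\sqrt{\pi\,|E_h\setminus B_{\rho_h}|}-\ep .
$$
Now $|E_h\setminus B_{\rho_h}|=v-|E_h\cap B_{\rho_h}|_f\ge v-|E_h\cap B_{2R}|_f\to v-|E\cap B_{2R}|_f\ge v_\infty$, and $\liminf_h P_f(E_h;B_R)\ge P_f(E;B_R)>P_f(E)-\ep$ by lower semicontinuity on the open set $B_R$; letting $h\to\infty$ and then $\ep\to0$ gives the claim.

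Combining this with the elementary bound $I(v)\le I(v-v_\infty)+2\sqrt{\pi v_\infty}$ — glue a far Euclidean disk of area $v_\infty$ onto an almost--optimal competitor for volume $v-v_\infty$, truncated to a large ball — and with $P_f(E)\ge I(|E|_f)=I(v-v_\infty)$, all these inequalities become equalities: $I(v)=I(v-v_\infty)+2\sqrt{\pi v_\infty}$ and $P_f(E)=I(v-v_\infty)$, so $E$ is an isoperimetric set for volume $v-v_\infty$. If $v_\infty=0$, $E$ itself is the desired isoperimetric set; if $v_\infty=v$, $E$ is empty and a Euclidean disk of area $v$ disjoint from $\overline{B(0,1)}$ is isoperimetric; and if $0<v_\infty<v$, recalling that an isoperimetric region is bounded, we attach a Euclidean disk $D$ of area $v_\infty$ at positive distance from $\overline E\cup\overline{B(0,1)}$, so that $|E\cup D|_f=v$ and $P_f(E\cup D)=P_f(E)+2\sqrt{\pi v_\infty}=I(v)$; thus $E\cup D$ is isoperimetric.

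The only genuine difficulty is the loss of compactness at infinity. Since $f$ is anchored at the origin one cannot recentre the minimising sequence, so one must work in a fixed frame and show that the escaping mass is paid for at exactly the Euclidean rate $2\sqrt{\pi\,(\cdot)}$, not more cheaply; the radius--selection trick (cutting along a sphere $\partial B_{\rho_h}$ on which $E_h$ has a short circular slice, and outside which the density is already $1$) is precisely what makes this estimate quantitative. The remaining ingredients — compactness for the $L^1_{\rm loc}$ limit, lower semicontinuity of $P_f$, boundedness of isoperimetric regions, and the concluding gluing — are standard.
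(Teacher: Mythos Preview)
Your concentration--compactness argument is correct, but it is genuinely different from the paper's proof. The paper takes a more elementary and direct route: starting from a smooth minimising sequence $F_j$, it modifies each $F_j$ into a set lying in a \emph{fixed} ball $B(0,R)$, thereby reducing to an isoperimetric problem with finite total mass. Concretely, each connected component of $F_j$ that meets $B$ is bounded via the diameter estimate \eqref{estdiamper}; the remaining components (lying entirely in the region where $f\equiv 1$) are replaced by a single Euclidean disk of the same total area---which can only decrease perimeter by the classical isoperimetric inequality---and this disk is then slid towards the origin. No limit is ever taken outside a compact set, so no mass can escape.

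Your approach trades this hands-on modification for the general concentration--compactness machinery: you let the sequence converge in $L^1_{\rm loc}$, quantify the escaping mass $v_\infty$, and show via a good-radius slicing argument that the escaping mass is paid for at exactly the Euclidean rate $2\sqrt{\pi v_\infty}$, so it can be reattached as a far disk. This is heavier but more robust: it does not rely on decomposing into connected components or on the planar diameter bound, and would adapt to $\R^N$ with only cosmetic changes. The paper's argument, by contrast, is shorter and avoids appealing to boundedness of isoperimetric regions (which you invoke as ``standard'' in the $0<v_\infty<v$ case), but leans on the $N=2$ diameter estimate to confine the components touching $B$.
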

\begin{proof}
Let $v>0$ be the area bound; it will be sufficient to prove that there exists $R>0$ and a minimising sequence $\{E_j\}$ such that
$$
|E_j|_f=v,\qquad  P_f(E_j)\to\inf\{P_f(F):|F|_f=v\}\qquad\text{and}\qquad E_j\subset B(0,R)\,.
$$
In this way, we will reduce to an isoperimetric problem in a space with finite total mass, 
where existence of isoperimetric sets is a well--known result (see \cite{Mor08Geo}).

To do this, consider any minimising sequence $\{F_j\}$ with $|F_j|_f=v$; it is not restrictive to assume $F_j$ to be smooth open sets, and so one can write $F_j=\cup_i F_{j,i}$, where $F_{j,i}$ denote the connected components of $F_j$. If $i$ is such that $F_{j,i}\cap B\neq\emptyset$, then estimate \eqref{estdiamper} easily provides $R_1>0$ (depending only on $v$) such that $F_{j,i}\subset B(0,R_1)$. For the other components, we can substitute $\cup_i\{F_{j,i}:F_{j,i}\cap B=\emptyset\}$ with a unique ball $B(x,R_j)$ (for certain $x\in\R^2$) enclosing the same volume,
thus defining the set $E_j:=B(x,R_j)\cup\bigcup\{F_{j,i}:F_{j,i}\cap B\neq\emptyset\}$. Notice that $R_j$ is bounded uniformly in $j$ by some $R_2$ depending only on $v$, since $|E_j|_f=v$, and that $P_f(E_j)\leq P_f(F_j)$. Anyway, we have a lot of freedom in the choice of the center $x$; i.e., we can move $B(x,R_j)$ towards the origin, without touching the other components of $E_j$, until we obtain (say) $E_j\subset B(0,R_1+2R_2+1)$.
\end{proof}

The nature of the isoperimetric problem, as we will see in a moment, is completely different in the two cases $\la>1$ and $\la<1$. Therefore, we treat them separately.

\subsubsection{The ball with density $\la>1$.}
\label{ballmayor}

We start with a description of all possible candidates for isoperimetric set. The first one is
a round ball, and, as we shall see in Theorem \ref{ballmagg}, balls are isoperimetric for volumes $v>\lambda \pi$. 
Other more complicated candidates arise when we look for
isoperimetric sets with area $v$ smaller than $\la\pi$. 

\begin{prop}\label{ballmino}
If the prescribed area $v$ is smaller than $\la\pi$, then an isoperimetric
set is one of the following (see Figure~\ref{ex_isop}): 
\begin{itemize}
\item[(a)] a round ball contained in $\R^2\setminus B$;
\item[(b)] the region enclosed by an arc of $\partial B$ and another internal circular arc (inside $B$), the angle between them being equal to $\arccos \tfrac{1}{\la}$
  (i.e., the limit case in the Snell law). 
\end{itemize}
\end{prop}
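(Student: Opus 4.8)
The plan is to reduce, via the structural results already established, to a configuration consisting of circular arcs (inside and outside $B$, all with the same generalized mean curvature) and arcs of $\partial B$, and then to argue that no configuration other than (a) and (b) can be isoperimetric. First I would invoke the existence theorem, so an isoperimetric set $E$ for area $v<\la\pi$ exists; by the regularity in Remark \ref{rmkMainProperties} and the constant-curvature condition \eqref{teoConstCurv}, the portion $\partial E\setminus\partial B$ consists of circular arcs with one common curvature $H_0$ (the generalized mean curvature $H_\psi$ reduces to the Euclidean curvature $H_\Sigma$ away from $\partial B$, since $\psi$ is locally constant there). Next, on $\partial B$ the density jumps from $1$ to $\la>1$ passing inward, so $f^-_{\nu_\Gamma}=1$, $f^+_{\nu_\Gamma}=\la$, and Proposition \ref{teofresnel} forces any transversal crossing to obey the Snell law \eqref{eqfresnel} with the outer arc making angle $\al_+$ and the inner arc angle $\al_-$ satisfying $\cos\al_+/\cos\al_-=1/\la$; in particular the angle of the outer arc with $\partial B$ must lie in $[\arccos\tfrac1\la,\pi-\arccos\tfrac1\la]$, while Proposition \ref{CorTangential} governs tangential contact, forcing either tangency or an angle bounded by $\arccos\tfrac1\la$ when $\partial E\cap\Omega_i$ (the piece outside $B$, where $f=1<\la$) meets $\partial B$ along $\sigma$.

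The heart of the argument is case analysis on how $E$ interacts with $B$. If $E\subset\R^2\setminus\overline B$, then $E$ sits in the region where $f\equiv1$, and by the classical (Euclidean) isoperimetric inequality $E$ must be a round ball: this is case (a). If $E\supset B$ (up to measure zero) or more generally $E$ contains a neighborhood of points of $B$, I would argue that pushing area into the cheap region $\R^2\setminus B$ — using the same ``constructed competitor'' idea as in Lemma \ref{constrLemma}, now adapted to the ball geometry — strictly decreases perimeter unless the part of $\partial E$ inside $B$ is a single arc of the right radius, i.e. the configuration (b): the region bounded by an arc of $\partial B$ and an internal circular arc meeting it at the limiting Snell angle $\arccos\tfrac1\la$. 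The key points to verify here are (i) that an arc of $\partial E$ lying entirely outside $B$ but with $E$ meeting $B$ cannot coexist with an inner arc unless Snell forces the specific angle, (ii) that having two or more inner arcs, or inner arcs not meeting $\partial B$ at the extremal angle, cannot occur — because then either a small perturbation lowers perimeter (the curvature being $<1$ in absolute value, as in the strip analysis) or the connectedness/volume constraint is violated, and (iii) that no portion of $\partial E$ with the ``wrong'' curvature sign (bulging the wrong way) survives. Throughout one uses that $v<\la\pi$ precisely to rule out $E=B$ or $E$ a ball of radius $\geq1$ centered at the origin as the unique minimizer, and to guarantee the candidate in (b) actually encloses the prescribed volume for a suitable choice of arcs.

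The main obstacle I expect is step (ii): systematically excluding configurations with an inner arc that meets $\partial B$ transversally at an angle strictly between $\arccos\tfrac1\la$ and $\tfrac\pi2$, or with \emph{two} inner arcs (analogous to the four-arc configurations that caused trouble for the strip density in Lemma \ref{lemstripposs} and Theorem \ref{teostrip}). For these, a clean argument likely combines a first-variation/Snell rigidity statement with an explicit ``substitute the inner arc'' area-restoring competitor, checking that an inequality of the type $\arc$-function comparison (as in the proof of Theorem \ref{teostrip}) goes the right way; I would also use Proposition \ref{CorTangential} to pin down the only admissible non-transversal contact. A secondary subtlety is making precise the reduction ``$E$ either avoids $B$ or engulfs part of it in the expected way'': here one invokes that $E$ is connected (which, as for the strip, follows from a components-translation argument using the diameter estimate \eqref{estdiamper}) together with the fact that the cheaper region is unbounded, so any ``excess'' area can be relocated there at a strictly lower perimeter cost unless $E$ is already one of (a), (b).
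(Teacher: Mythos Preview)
Your plan identifies the right framework (constant-curvature arcs off $\partial B$, Snell law at crossings, Proposition~\ref{CorTangential} at tangential contacts) and correctly locates the main obstacle as ruling out configurations where the boundary crosses $\partial B$ transversally. However, the tool you propose for this---an $\arc$-function comparison borrowed from the strip proof---is not what carries the argument, and I do not see how to make it work here.

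The paper's actual mechanism is a \emph{geometric continuation argument}. Suppose an external arc $c_1$ meets $\partial B$ transversally at $P$. Snell forces an internal arc $c_2$ of the same curvature, which exits $\partial B$ at some $P_2$; Snell there forces another external arc $c_3$, and $c_3$ is obtained from $c_1$ by a rotation about the origin (because the Snell angle at $P_2$ equals the one at $P$ by symmetry). Iterating, one shows the boundary must self-intersect, contradicting that $\partial E$ is a Jordan curve---except in the special case where $c_1\cup c_2$ closes up into a full circle meeting $\partial B$ orthogonally. That last case is eliminated by a direct computation: its isoperimetric ratio $P_f^2/|E|_f$ exceeds $4\pi$, the ratio of a ball outside $B$. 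This rotation/self-intersection idea is the heart of the proof and is absent from your plan. Once transversal crossings are excluded, multiple inner arcs are disposed of by \emph{rotating} one about the origin until it touches another (area and perimeter are preserved, but regularity or Snell fails at the contact point).

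Two further gaps: first, before you can invoke Propositions~\ref{teofresnel} and~\ref{CorTangential} you must establish that $E$ has \emph{regular trace} on $\partial B$, i.e., that only finitely many arcs occur. The paper devotes three steps to this (a cutting argument for short internal arcs, an angle bound $\leq\pi/2$ for external arcs, and a reordering argument for accumulating external arcs). Second, your ``push area into the cheap region $\R^2\setminus B$'' heuristic, modeled on Lemma~\ref{constrLemma}, points the wrong way: it would suggest case~(a) always beats case~(b), which is false for $v$ near $\la\pi$ (Theorem~\ref{ballmagg}). The construction of Lemma~\ref{constrLemma} is a one-dimensional stretching with no analogue for the ball geometry.
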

\begin{proof}
In what follows, we will denote by $E$ an isoperimetric set of area
$v$ and by $\g=\partial E$ its boundary.

{\em Step 1: $E$ is either a ball outside $B$ (i.e. a set of type $(a)$) or the closure of each of its components
is simply connected and intersects both $B$ and its complement.} 

It is not difficult to see that the isoperimetric boundary $\g$ can be decomposed into countably many rectifiable Jordan curves $\gamma_j$, and that $E$ coincides with the union of the connected and simply connected open regions $E_j$ enclosed by each $\g_j$. Otherwise, we could write $E_j=F(\g_j)\setminus G_j$, where $F(\g_j)$ is the region enclosed by a certain Jordan curve $\g_j$ and $G_j$ is a proper subset of $F(\g_j)$. Since our density is rotationally invariant, we could rotate $G_j$ about the origin until it touches $\g_j$: perimeter and area are unaffected, but regularity fails. 

Since balls outside $B$ are better than inside, there is no $E_j$ with $\overline{E_j}\subset B$, or we could substitute all such regions with a unique ball outside $B$, with same area and less perimeter. Similarly, there is at most one single component outside $B$, which must be a ball. If this is the case, there is no other component $E_j$ such that $\overline{E_j}$ intersects both $B$ and its exterior, otherwise we could move our external ball until it touches $E_j$, thus violating regularity. Therefore, the isoperimetric set would be a ball like in part $(a)$ of the statement.

In the rest of the proof we will focus our attention on the remaining case that any $\overline{E_j}$ intersects both $B$ and its exterior. 

{\em Step 2: Only a finite number of internal arcs is allowed: as a consequence, $E$ has regular trace inside $B$.}

Recall that $\g\cap B$ and $\g\setminus \overline B$ consists of circular arcs $c_j$ with endpoints on $\partial B$ (we have already excluded whole circles),
 with the same constant curvature $\kappa$. Moreover, $E$ must contain all the regions $F_j$ enclosed by a $c_j$ and the corresponding arc on $\partial B$, 
determined by the endpoints of $c_j$ (notice that, for internal arcs, there are two possible regions of this type, a bigger and a smaller one). Otherwise, we 
could use a cutting technique. 

Suppose by contradiction that there are internal arcs $c_j$ as in Figure \ref{internalarcs} of arbitrarily small length and set $\al_j$ to be the associated central angle. 
Due to curvature $\kappa$, the area of the region $F_j$ enclosed by $c_j$ is of order $\al_j^3$. By cutting $F_j$ from our set we would 
gain perimeter $\sim (\la-1)\al_j$, and we could restore the lost area $\sim \al_j^3$ with some ball of radius (and perimeter) $\sim\al_j^{3/2}$. This is a contradiction, since for sufficiently small $\al_j$ we would overall gain perimeter.

\begin{figure}[h!tbp]
\begin{center} 
\scalebox{1}{ 
\input{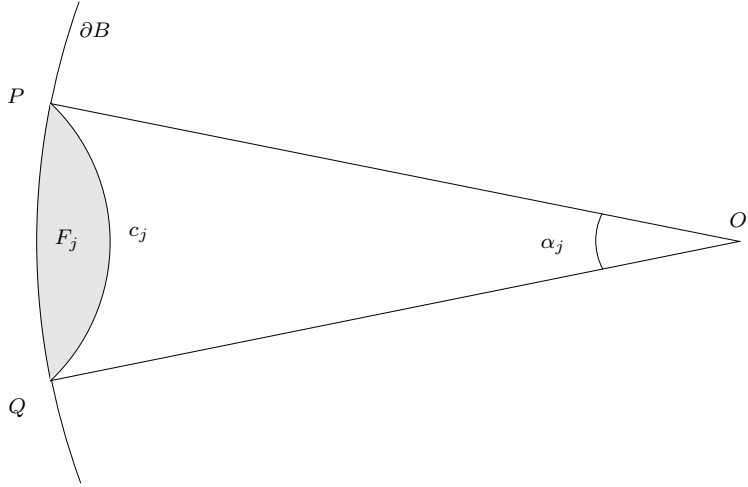} 
} 
\end{center}
\caption{Internal arcs enclose a fixed area amount.}
\label{internalarcs} 
\end{figure}

As a straightforward consequence, only a finite number of internal $c_j$'s is allowed. This implies also that the number of connected components $E_j$ is finite: 
in fact, $\overline E_j\cap B\neq \emptyset$, whence the boundary $\g_j$ must meet the open ball $B$, and so contains an internal arc $c_j$.

{\em Step 3: External arcs cannot meet $\partial B$ with angle strictly greater than $\pi/2$.}

This is a technical point: it will allow us to use arguments like ``put external arcs in succession, so that two consecutive ones have a common endpoint'',
which would be precluded if one angle were greater than $\pi/2$. In fact, by doing such an operation it could happen that two consecutive regions overlap; it 
is evident that this pathology cannot occur if the meeting angles are less than $\pi/2$. 

Suppose now that there exists such an external arc $c$, with endpoints $P$ and $Q$; let us prove that the distance between $c$ and any other external arc is 
bounded away from 0. Otherwise, a sequence of external arcs $c_k$ would accumulate at $P$ (or $Q$), so some arc near $P$ would lie ``below'' $c$ (i.e., there exists a half--line from the origin meeting both $c$ and $c_k$); say that the 
accumulating sequence of arcs stays ``on the left'' of $P$. Since there is only a finite number of internal arcs we just have the following two possibilities:
\begin{itemize}
\item there is a small arc $a\subset\partial B$, with $P$ as the right endpoint (i.e. $a$ is staying ``on the left'' of $P$) which is contained in one of the 
arcs on $\partial B$ determined by some internal arc $c_j$. Informally, we could say that the sequence $c_k$ ``rests'' on the arc $a$. We could then use a small segment $s$ to join an external arc $c_k$ ``below'' $P$ to some point on $c$ close to $P$, and add the region enclosed by $s
,\ c,\ a$ and the $c_k$'s. In this way we have gained both perimeter and area, and we could use cutting arguments to restore area and gain perimeter again, which is contradictory.
\item there is a small arc $a$ on $\partial B$, with $P$ as the right endpoint, 
which is not contained in any of 
the arcs on $\partial B$ determined by the internal arcs $c_j$. This implies that each external arc $c_k$, together with the arc it determines on $\partial B$, bounds a connected component contained in the region with density 1, which could be cut away and replaced by balls to restore area and gain perimeter.

\end{itemize}

Also, it is not possible for $P$ and $Q$ to lie in the {\em interior} of the arc $a$ of $\partial B$ determined by some internal arc. Otherwise we could use 
a segment $s$ to join some point on $c$, close to $P$, to some point on $a$ ``below'' $P$, and add the region enclosed by $c$, $a$ and $s$, thus gaining both
 perimeter and area.

The previous arguments imply that the connected component $E_j$, whose boundary contains $c$, is ``free'' to move left or right: i.e., 
unless $E_j$ is the only component, we can rotate it about the origin until one of the following happens
\begin{itemize}
\item $c$ touches another external arc at a point outside $\overline B$;
\item some endpoint of another external arc becomes too close to $P$ or $Q$;
\item some internal arc of $\g_j$ touches another internal arc at a point in $B$;
\item some internal arc not of $\g_j$ becomes too close to $P$ or $Q$.
\end{itemize}
Each of these possibilities leads to a contradiction (since in any case we could use cutting arguments) unless there exists only one connected component $E$.

If this is the case, consider an internal arc $c_j$: it is not difficult to realise that the arc on $\partial B$ determined by $c_j$'s endpoints
must be contained in the arc
$PQ$ (again, the one ``internal'' to the region enclosed by $c$). 
Otherwise, it would be contained in the opposite arc $PQ$ 
(we have already seen that the endpoints of $c_j$ cannot lie one 
on the arc $PQ$ and one on the other arc $PQ$, or we could cut at $P$ or $Q$), 
violating connectedness. Since any internal 
arc ``rests'' on $PQ$ and there is only one connected component, 
there cannot exist another external arc apart from $c$.

Therefore, the boundary of $E$ is made by $c$ plus a finite number of internal arcs $c_j$, any of them ``resting'' on the arc $PQ$. Were there more than one,
 we could rotate them until they touch, contradicting minimality also if the touching point is on $\partial B$ 
(the Snell law (\ref{eqfresnel}) would not hold). Therefore just an
 internal arc $c'$ is allowed: by rotating it, we can suppose it to have an endpoint in $P$, where the Snell law must hold. By symmetry, the other endpoint meets $\partial B$ with the same angle, and so must coincide with $Q$. 

Recall that $c$ and $c'$ have the same curvature: so, they can be either symmetric with respect to the segment
$PQ$ or form a complete circle. In both cases, the Snell law would be violated unless the meeting angle is exactly $\pi/2$ yielding a contradiction.

{\em Step 4: Only a finite number of external arcs is allowed: in particular $E$ has regular traces on $\partial B$.}

By contradiction: suppose that outer arcs accumulate at a point $P\in\partial B$; we can then find an arc $\ell\subset\partial B$ such that
\begin{itemize}
\item $P$ is an endpoint of $\ell$;
\item there are infinitely many external arcs $C_j$ whose endpoints lie on $\ell$;
\item the trace of $B\cap E$ on $\partial B$ is constantly 0 or 1 on $\mathring \ell$ (in other words: the finitely many arcs $\ell_j$ on $\partial B$, determined by internal arcs $c_j$, are such that $\mathring\ell\subset \ell_j$ or $\mathring\ell\cap\ell_j=\emptyset$).
\end{itemize}
We can therefore re-order the $C_j$'s (and the corresponding enclosed regions $F_j$) in sequence, in such a way that they still ``rest'' on $\ell$ and any two consecutive $C_j$ have a common endpoint on $\partial B$. In this way, area is clearly not affected, and so does the perimeter, since the third assumption on $\ell$ ensures that we are not creating (nor destroying) perimeter on $\partial B$. Regularity clearly fails with this construction, since the number of internal arcs is finite. 

{\em Step 5: The isoperimetric set is connected, enclosed by a piecewise $\ci^1$ Jordan curve, and the Snell law \eqref{eqfresnel} holds.}

As we have seen, the boundary $\g\setminus\partial B$ is made by a finite number of arcs; therefore, the whole $\g$ is made by a finite number of arcs which can be internal, external or subarcs of $\partial B$. In what follows, we will call ``external'' also an arc of $\g$ constituted by a subarc of $\partial B$. The regularity of traces of $E$ on $\partial B$ implies that the Snell law \eqref{eqfresnel} must hold at any endpoint on $\partial B$ of any arc of $\g$. More precisely, at any of these endpoints a unique external and a unique internal arc meet, with the Snell law satisfied. This holds true also when the external arc is the limit case, i.e. a subarc of $\partial B$.

Also, there is only a finite number of connected components (which, as we have already seen, are simply connected too). Suppose there
were more than one: we could rotate one of them about the origin until meeting another, contradicting the Snell law. 

{\em Step 6: An external arc cannot meet $\partial B$ transversally.}

%
Suppose by contradiction that this
happens at a point $P\in\partial B$; then (part of) $\g$ consists of two circular arcs $c_1$ (outside $B$) and $c_2$ (in $B$), of the same
radius, which intersect at $P$ respecting the Snell law \eqref{eqfresnel}: see Figure \ref{ball2}. Since
$\la>1$, the angle $\al_1$ at $P$ between $c_1$ and $\partial B$ is
strictly smaller than the angle $\al_2$ at $P$ between $c_2$ and
$\partial B$, provided $\al_1\neq \pi/2$. In some sense, the
discontinuity of the density causes a rotation, of center $P$ and of
an angle $\al_2-\al_1>0$, of the circle $c_1$. We recall
that the curvatures of $c_1$ and $c_2$ (with respect to the inner normal), and not just their radii, coincide. 

\begin{figure}[h!tbp]
\begin{center} 
\scalebox{1}{ 
\input{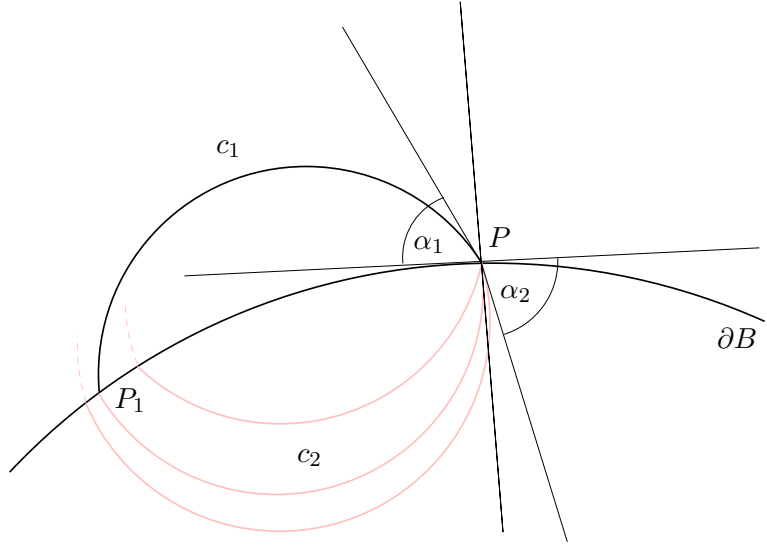} 
} 
\end{center}
\caption{Possible prolongations of $c_2$.}
\label{ball2} 
\end{figure}

Let then $P_1,P_2$ be the other points where $\partial B$ meets,
respectively, $c_1$ and $c_2$, and suppose $P_1\neq P_2$: by
prolonging $\g$ after $P_2$ (thus obtaining an arc $c_3$, see Figure \ref{ball3}) we will fall 
into a contradiction. In fact, the Snell law
holds again, and so the angle between $c_3$ and $\partial B$ is
exactly $\al_1$: this means that $c_3$ can be obtained from $c_1$
by means of the rotation centered at the origin which moves $P_1$
to $P_2$. The arcs $c_1$ and $c_3$ do not intersect (this happens,
e.g., when $P_2$ lies in the arc $P_1P$), or this would lead to a
contradiction. If $P_2$ does not lie on the arc $P_1P$ and $c_1,c_3$
do not intersect, then $c_3$ meets $\partial B$ at another point $P_3$
on $P_2P_1$. Prolong then $c_3$ from $P_3$ to obtain $c_4$: this arc
will surely meet $c_2$, since it can be obtained by rotation and
$P_3\in P_2P_1$. This is a contradiction. 

\begin{figure}[h!tbp]
\begin{center} 
\scalebox{1}{ 
\input{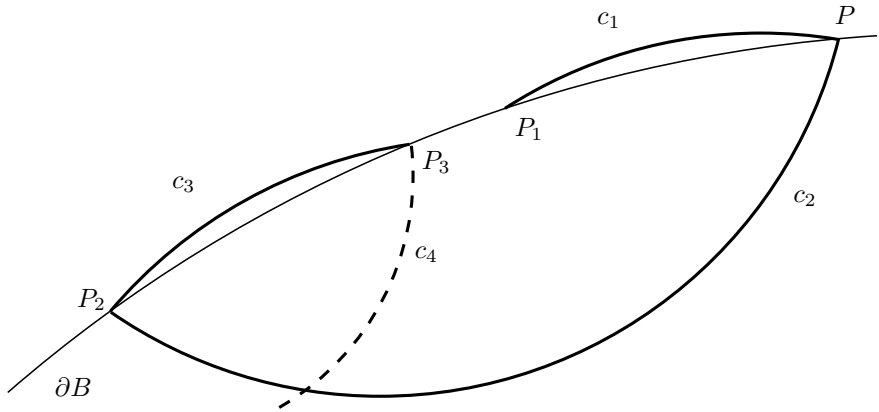} 
} 
\end{center}
\caption{Self intersections cannot happen.}
\label{ball3} 
\end{figure}

If $P_1=P_2$, then $c_1,c_2$ (circles with same curvature) are either symmetric with respect to the segment
$PP_1$ or form a complete circle, and this contradicts the Snell law unless
$\al_1=\al_2=\pi/2$, i.e. $c_1$ and $c_2$ form a complete circumference $c$ (Figure
\ref{ball4}). Let $Q$ be the center of $c$, $r$ its radius, and
set $\be:=\widehat{P_1QP}$; noticing that the radii $QP$ and $QP_1$ are
tangent to $\partial B$, we obtain that the density area $v_f(c)$ of the circle
is less than 
$$
r^2[\pi+(\la-1)\cdot(\text{Area of sector }P_1QP)]=r^2[\pi+(\la-1)\be/2]\,.
$$
Since the density perimeter of the circle $c$ is
$r(2\pi+(\la-1)\beta)$, we obtain that 
$$
\frac{{P_f(c)}^2}{v_f(c)}>
\frac{r^2(2\pi+(\la-1)\beta)^2}{r^2[\pi+(\la-1)\be/2]}=
4[\pi+(\la-1)\be/2]>4\pi\,, 
$$
which is the isoperimetric ratio of any ball outside $B$. It follows
that $c$ cannot be isoperimetric. 

\begin{figure}[h!tbp]
\begin{center} 
\scalebox{1}{ 
\input{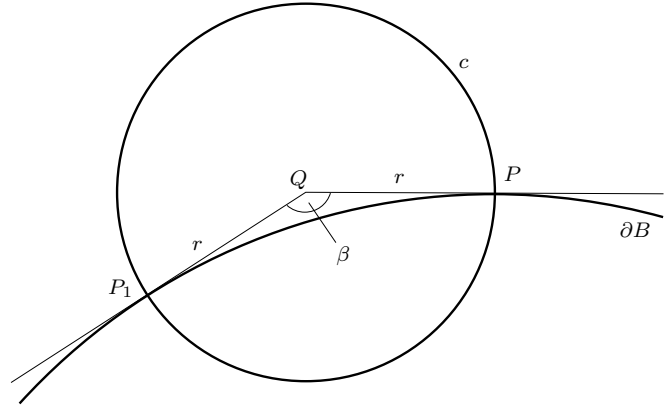} 
} 
\end{center}
\caption{A particular ball.}
\label{ball4} 
\end{figure}

{\em Step 7: Reduction to case $(b)$.}

In Step 6 we excluded external arcs which are not subarcs of $\partial B$ and have two endpoints on $\partial B$. In fact they would be tangential to $\partial B$, and then would either enclose the whole $B$ (contradicting assumption $v<\la\pi$) or a ball external to $B$.
%

%

It follows that the isoperimetric boundary $\g$ is composed by a finite number of internal $c_j$'s and some subarcs of $\partial B$. Were the internal $c_j$'s more than one, we could rotate one of them until it touches another one, thus violating minimality at the touching point. Therefore, the isoperimetric set is the one described in part $(b)$ of the statement.
\end{proof}


\begin{figure}[h!tbp]
\begin{center} 
\scalebox{1}{ 
\input{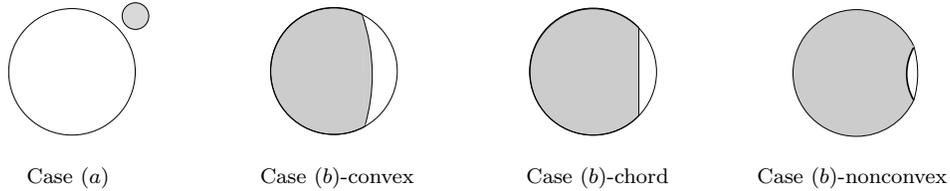} 
} 
\end{center}
\caption{Isoperimetric sets, $v<\la\pi$.}
\label{ex_isop} 
\end{figure}

We can now conclude this section with the main result.

\begin{theo}\label{ballmagg}
For the plane with density $\lambda>1$ on the unit ball and $1$ elsewhere, 
there exists a $0<v_0<\lambda\pi$ such that
\begin{enumerate}
 \item 
if $v\leq v_0$, the isoperimetric set with area $v$ is a ball contained in $\rr^2\setminus B$ (type \ref{ballmino}(a));
\item
if $v_0\leq v\leq \lambda\pi$ the isoperimetric set is of type \ref{ballmino}(b);
\item
if $v\geq \lambda\pi$, the isoperimetric set is a round ball containing 
the ball $B$.
\end{enumerate}
All three subcases of type \ref{ballmino}(b) of Figure~\ref{ex_isop}, convex, chord and nonconvex, actually occur for some values of $\lambda$.
\end{theo}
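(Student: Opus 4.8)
The plan is to treat separately the easy range $v\ge\la\pi$ and the range $v<\la\pi$, where Proposition~\ref{ballmino} has already narrowed the candidates down to types (a) and (b).

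\emph{The range $v\ge\la\pi$.} Since $f\ge 1$ everywhere and $|E|_f=|E|+(\la-1)\,|E\cap B|\le|E|+(\la-1)\pi$, any competitor $E$ with $|E|_f=v$ has $|E|\ge v-(\la-1)\pi\ge\pi$, so by the Euclidean isoperimetric inequality
\[
P_f(E)\ \ge\ P(E)\ \ge\ 2\sqrt{\pi|E|}\ \ge\ 2\sqrt{\pi\big(v-(\la-1)\pi\big)}\,.
\]
On the other hand the round ball $B(0,r)$ with $r=\sqrt{(v-(\la-1)\pi)/\pi}\ge 1$ contains $\overline B$, has weighted area $v$ and weighted perimeter $2\pi r$, hence attains the bound; so it is isoperimetric. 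Tracking the equality cases (equality in the isoperimetric inequality forces a ball, equality in $|E\cap B|\le\pi$ forces $B\subseteq E$, whence $\partial E\cap B=\emptyset$ automatically) shows that every isoperimetric set for $v\ge\la\pi$ is a ball containing $B$; for $v=\la\pi$ it degenerates into $B$ itself. This is statement (3).

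\emph{The range $v<\la\pi$.} By Proposition~\ref{ballmino} an isoperimetric set is either a round ball in $\R^2\setminus B$ (type (a)), with perimeter $P_{(a)}(v)=2\sqrt{\pi v}$, or of type (b); so it suffices to compare perimeters at equal area. I would parametrise the type (b) family by the signed curvature $\kappa$ of its internal arc (the Snell angle $\arccos(1/\la)$ at the two endpoints on $\partial B$ being prescribed), write down explicitly the weighted area $A(\kappa)$ and weighted perimeter $P(\kappa)$ by elementary circular-arc computations, and then study the sign of
\[
\Phi(\kappa):=P(\kappa)^2-4\pi A(\kappa),
\]
which is positive precisely when the type (a) ball of the same weighted area strictly beats the type (b) set. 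Two limiting computations are straightforward: as the area tends to $0$ the type (b) set is thin and elongated, so its weighted isoperimetric ratio blows up and $\Phi>0$ (type (a) wins for small $v$); and at area $\la\pi$ the type (b) set is $B$ itself, with perimeter $2\pi<2\pi\sqrt\la=P_{(a)}(\la\pi)$, so $\Phi=4\pi^2(1-\la)<0$ there. The crux is to prove that $\Phi$ vanishes exactly once, passing from positive to negative as the area grows; I expect this to follow from the explicit formulas by showing that at any zero of $\Phi$ its derivative with respect to the area is strictly negative — equivalently, that the generalised mean curvature of the type (b) set is then strictly smaller than the curvature $\sqrt{\pi/A}$ of the equal-area ball of type (a). Setting $v_0$ equal to the area of the type (b) set at that unique zero yields (1) and (2), and $v_0<\la\pi$ because type (b) strictly wins near $\la\pi$.

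\emph{The three subcases.} Finally, to see that the convex, chord and nonconvex profiles of Figure~\ref{ex_isop} all occur, I would follow the shape of the optimal type (b) set as $v$ runs over $(v_0,\la\pi)$: as $v\to\la\pi$ the set is $B$ minus a small cap and is nonconvex (this happens for every $\la$), while for suitable $\la$ the internal arc of the optimal set still bulges away from the region — giving a convex lens — for $v$ slightly above $v_0$; since the chord configuration ($\kappa=0$) is exactly the boundary between the convex and the nonconvex ones, a continuity argument in $v$ (for a fixed such $\la$) then produces it as well. The main obstacle throughout is the single-crossing property of $\Phi$ in the range $v<\la\pi$; everything else reduces either to the Euclidean isoperimetric inequality or to bookkeeping with circular arcs.
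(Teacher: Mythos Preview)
Your treatment of the range $v\ge\la\pi$ matches the paper's argument exactly, and your overall plan for $v<\la\pi$---compare $P_{(b)}^2$ with $4\pi A_{(b)}$, check the endpoints, and prove a single crossing---is the same strategy the paper follows. However, there is a genuine error in your endpoint analysis at small area.

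You assert that as the area tends to $0$ the type~(b) set is ``thin and elongated'' and hence its isoperimetric ratio $P_f^2/|E|_f$ blows up. This is false. The type~(b) set, for small area, is a small lens near a point of $\partial B$: an arc of $\partial B$ of length $\sim 2\al$ together with an internal circular arc of radius $\sim\al/\sin\be_0$ subtending a fixed angle $2(\pi-\be_0)$, where $\be_0=\arccos(1/\la)$. Both perimeter and the square root of area scale linearly in $\al$, so the ratio $P_f^2/|E|_f$ tends to a \emph{finite} limit. The paper computes this limit explicitly,
\[
\lim_{\al\to 0}\frac{P_f(E_\al)^2}{|E_\al|_f}
=4\sin\be_0\Bigl(1+\frac{\pi-\be_0}{\sin\be_0\cos\be_0}\Bigr),
\]
and then has to prove that this number exceeds $4\pi$ for every $\la>1$; this is done via a monotonicity argument on an auxiliary function of $t=1/\la$. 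So ``type~(a) wins for small $v$'' is true, but not for the reason you give, and establishing it requires real work that your proposal skips.

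Two smaller points. For the single-crossing property you correctly identify the mechanism (at a crossing the curvature of the type~(b) set must be strictly less than that of the equal-area type~(a) ball), but you do not indicate how to verify it; the paper does this via a monotone ``percentage of the internal ball'' parameter $\varrho(v)$, which is perhaps the least obvious step in the whole proof. And for the three subcases, the paper itself ultimately appeals to numerical computation to see that the convex profile actually occurs for some~$\la$, so your continuity sketch is in the same spirit---but you should be aware that neither argument is fully analytic on that point.
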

\begin{proof}
{\em Step 1: If $v\geq \lambda\pi$, the isoperimetric set is a round ball containing $B$.} 

\noindent
Let $E$ be any set of weighted area $v$: since the region with density
$\la$ has Euclidean area $\pi$, the Euclidean area of $E$ is at least
$v-(\la-1)\pi$ and by classical isoperimetry its Euclidean perimeter
is at least that of the ball with the same area, i.e. the one with
radius $(\tfrac{v}{\pi}+1-\la)^{1/2}$. Since $f\geq 1$, this also gives a
lower bound for the $f$-perimeter of $E$; however, any ball as in
the statement is such that these inequalities are in fact equalities, and
so is isoperimetric. Conversely, for any other set, 
at least one of the inequalities is strict, and so it cannot be isoperimetric.

{\em Step 2: If $v$ is small, the isoperimetric set is a ball of type $(a)$.} 

\noindent
Suppose by contradiction this is not true: 
then the isoperimetric set $E$ is a $(b)$-convex type (see Remark~\ref{evolution}), 
determined by an arc $PQ$ on $\partial B$ and another one of center
$O'$, meeting the arc $PQ$ with angle $\be_0=\arccos(1/\la)$:
as in Figure \ref{smallv}.

\begin{figure}[h!tbp]
\begin{center} 
\scalebox{1}{ 
\input{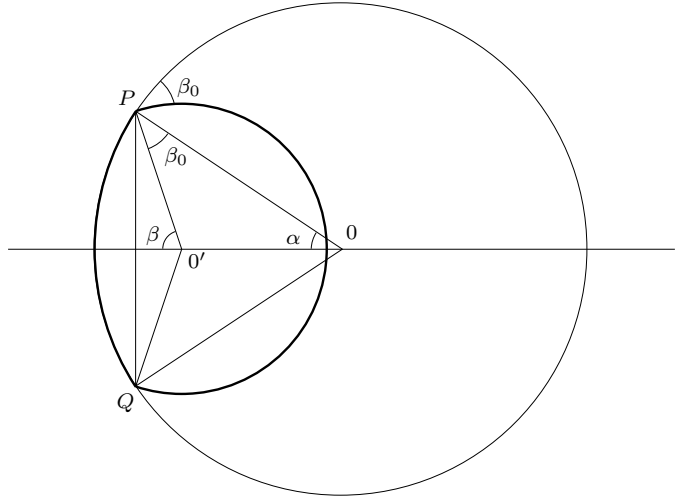} 
} 
\end{center}
\caption{Area $v\downarrow 0$.}
\label{smallv} 
\end{figure}

Let $\al$ be the angle
$\widehat{POO'}=\widehat{POQ}/2$ and $\be=\widehat{PO'Q}/2$; we have
$\be=\al+\be_0$ (since $\widehat{O'PO}=\be_0$) and it is easy to compute the density perimeter of $E$
$$ 
P_f(E)=P_f(E_\al)=2\left(\al+\la\frac{\sin\al}{\sin\be}(\pi-\be)\right)
$$
and the density area of $E$
$$
|E|_f=|E_\al|_f=\la\left[\al-\sin\al\cos\al
  +\left(\frac{\sin\al}{\sin\be}\right)^2(\pi-\be+\sin\be\cos\be)\right].  
$$
We will get a contradiction if we show that $\lim_{\al\to
  0}P_f(E_\al)^2/|E_\al|_f>4\pi$. To do this, we express $\sin\be,\cos\be$ and $\la$ as functions of $\al$ and $\be_0$ (remember that
$\be=\al+\be_0$ and $\la=1/\cos(\be_0)$) and, through a straightforward Taylor expansion, we
achieve 
$$
P_f(E_\al)= 2\al\left( 1+\frac{\pi-\be_0}{\sin\be_0\cos\be_0}\right)+O(\al^2)\qquad\text{and}\qquad |E_\al|_f= \frac{\al^2}{\sin\be_0}\left( 1+\frac{\pi-\be_0}{\sin\be_0\cos\be_0}\right)+O(\al^3)\,.
$$
Then
$$
\lim_{\al\to
  0}\frac{P_f(E_\al)^2}{|E_\al|_f}=
4\sin\be_0\left(1+\frac{\pi-\be_0}{\sin\be_0\cos\be_0}\right) =  4\sqrt{1-t^2}\left( 1+ \frac{\pi-\arccos t}{t\sqrt{1-t^2}}\right)\,,
$$
where we put $t:=1/\la\in(0,1)$. We have to prove that
$$
\p(t):= \sqrt{1-t^2}+ \frac{\pi-\arccos t}{t}>\pi \qquad\text{for any }t\in(0,1)
$$
which would allow to conclude. Since $\p(1)=\pi$, it is sufficient to show that $\p'<0$ in $(0,1)$. Indeed it is
\begin{eqnarray*}
\p'(t) &=& -\frac{t}{\sqrt{1-t^2}} + \frac{1}{t\sqrt{1-t^2}}+\frac{\arccos t-\pi}{t^2}\\
&\leq& -\frac{t}{\sqrt{1-t^2}} + \frac{1}{t\sqrt{1-t^2}}-\frac{\:\tfrac\pi2\:}{t^2}\\
&=& \frac{-t^3+t-\tfrac\pi2\sqrt{1-t^2}}{t^2\sqrt{1-t^2}}\\
&=& \frac{\sqrt{1-t^2}\left( t\sqrt{1-t^2}-\tfrac\pi2\right)}{t^2\sqrt{1-t^2}}\\
&<& \frac{\left( 1-\tfrac\pi2\right)}{t^2}\ <\ 0.
\end{eqnarray*}

{\em Step 3: For $v$ close to $\la\pi$ the isoperimetric set is of type $(b)$.}

\noindent
To prove this, it is sufficient to exhibit a set $F$ of area $v$ with ${P_f(F)}^2/v<4\pi$, which is the isoperimetric ratio of any ball of type $(a)$. Let $\ep:=\la\pi-v>0$, and remove from $B$ a little
curvilinear polygon of weighted area $\ep$ by a chord $PQ$ as in Figure \ref{ball}. Clearly, the remaining set $F_\ep$ is such that 
$P_f(F_\ep)\to 2\pi$ and $|F_\ep|_f\to\la\pi$ as $\ep\to0$, and so the 
isoperimetric ratio converges to $4\pi/\la<4\pi$. 
 
\begin{figure}[h!tbp]
\begin{center} 
\scalebox{1}{ 
\input{ball.pstex_t} 
} 
\end{center}
\caption{Area $\la\pi-\ep$.}
\label{ball} 
\end{figure}

{\em Step 4: There exists $0<v_0<\lambda\pi$ such that for $v<v_0$ the isoperimetric set is of type (a), and
for $v_0<v<\lambda\pi$ is of type (b).}

\noindent
Figure~\ref{graph_ballmagg} plots the perimeter as a function of the volume of type (a) configurations
(dashed line) and type (b) configurations (solid line); the dotted line is the perimeter of balls 
containing the interior of 
the unit ball. The slope of these graph is given by the curvature of the boundary of the isoperimetric set.
\begin{figure}[htbp]
\begin{center} 
\includegraphics[width=15cm]{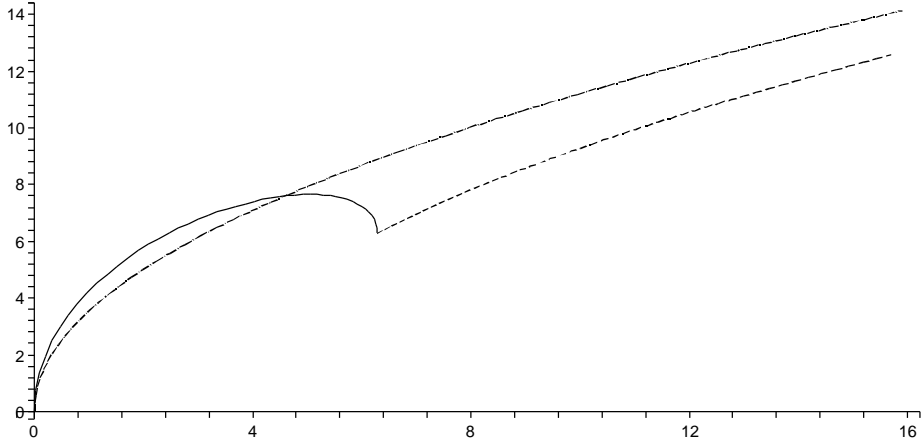}
\end{center}
\caption{Comparison between type (a) and type (b) configurations with $\lambda=2$.}
\label{graph_ballmagg} 
\end{figure}
We have to show that there exists a unique point $v_0>0$ such that the graph of the type (a) touches
the graph of type (b). We only consider convex type (b) sets, because
otherwise the curvature, and hence the slope of the (b)-graph, is negative. 
We denote by $B_b(v)$ the type (b) set with given volume $v$; it is given by
$B_b(v)=B\cap B_{r_b(v)}(x(v))$ for some $x(v)\in \R^2$ and with
$$
\frac{1}{r_b(v)}=\frac{d}{dv}P_f(B_b(v)).
$$
The Euclidean volume of $B_b(v)$ is a given percentage $\vrho(v)$ of the Euclidean volume of $B_{r_b(v)}(x(v))$, that is
$$
|B_b|_f=\lambda \vrho(v)\pi r_b(v)^2.
$$
We notice that $v\mapsto \vrho(v)$ is a decreasing function; to see this one can perform a linear transformation
in such a way that $B_{r_b(v)}(x(v))$ becomes the unit ball centered in the origin. With this modification,
the ball $B$ is sent to the ball $B'$ with $\partial B'$ meeting $\partial B_1(0)$ at a fixed angle. Enlarging $v$, we obtain
a smaller ball $B'$, so that $\varrho$ decreases.

We also denote by $B_a(v)=B_{r_a(v)}(y(v))$ a ball of type (a), with $r_a(v)=(v/\pi)^{1/2}$. If $v_0$ is the 
first value of the volume for which $P_f(B_b(v_0))=P_f(B_a(v_0))$, we have that $\pi r_a(v_0)^2=\lambda \vrho(v_0)\pi r_b(v_0)^2$ 
and, using the slopes of the graphs, 
$$
\frac{1}{r_b(v_0)}=\frac{d}{dv} P_f(B_b(v))_{|v=v_0}\leq
\frac{d}{dv} P_f(B_a(v))_{|v=v_0}=
\frac{1}{r_a(v_0)};
$$
this implies that $\vrho(v_0)\leq \frac{1}{\lambda}$. 
Then it follows that $\vrho(v)<\frac{1}{\lambda}$ for $v>v_0$, and so
$\pi r_a(v)^2=\lambda \vrho(v)\pi r_b(v)^2<\pi r_b(v)^2$, whence 
$$
\frac{1}{r_b(v)}< \frac{1}{r_a(v)}
$$
and then for $v>v_0$ the function $P_f(B_b(v))$ is strictly smaller than $P_f(B_a(v))$, due to the above inequality of slopes.

By numerical computation,
all of the cases (b)--convex, (b)--chord, and (b)--nonconvex occur for $\lambda$ close to 1, while for great $\lambda$ only (a) and (b)--nonconvex are isoperimetric.
\end{proof}

\begin{rmk}
\label{evolution}
Type (b) sets evolve from convex to nonconvex when they are isoperimetric solutions. 
Consider the curvature of the inner arc as a parameter; 
since all these inner arcs will meet $\partial B$ with constant angle, it turns that 
the enclosed volume is a decreasing function with respect to the curvature. 
So when volume increases, curvature of inner arcs goes from positive to negative, and hence 
type (b) sets go from convex to nonconvex.
\end{rmk}

\subsubsection{The ball with density $\la<1$.}
\label{ss:bmenor1}
We begin with describing the candidate isoperimetric sets; {\em mutatis mutandis}, most of the arguments are the same as in 
Section~\ref{ballmayor} for $\la>1$.

\begin{prop}
The candidate isoperimetric sets are (see Figure \ref{figbm1}):
\begin{itemize}
\item[(A)] balls entirely contained in $B$;
\item[(B)] sets enclosed by two arcs with common endpoints: the first arc lies on $\partial B$, the second one is in $\R^2\setminus B$ and meets $\partial B$ with angle $\arccos \la$;
\item[(C)] balls meeting $B$ orthogonally.
\end{itemize}

\begin{figure}[h!tbp]
\begin{center} 
\scalebox{1}{ 
\input{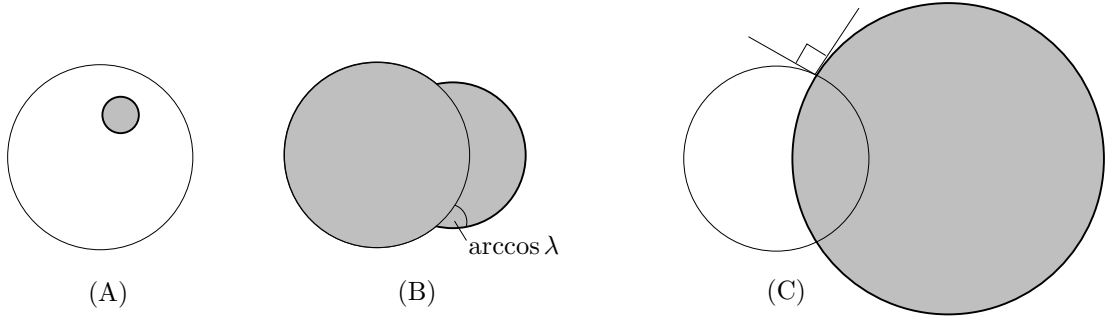} 
} 
\end{center}
\caption{Candidate isoperimetric sets for the ball density with $\la<1$.}
\label{figbm1} 
\end{figure}
\end{prop}

\begin{proof}
We closely follow the proof of Proposition~\ref{ballmino}.
In many cases, it will be sufficient to exchange the roles of internal and external arcs. We now agree to call {\em internal} an arc lying on $\partial B$. Again, we denote by $E$ an isoperimetric set of area $v$ and by $\g$ its boundary. 

{\em Step 1: $E$ is either a ball inside $B$ (i.e. a set of type (A)) or the closure of each of its components intersects both $\overline B$ and $\R^2\setminus \overline B$.}

This point presents no substantial difficulty with respect to Step 1 in Proposition \ref{ballmino}. 
In the rest of the proof we will suppose that $E$ is not a type-(A) ball, i.e. that the closure of any of its component intersects both $\overline B$ and $\R^2\setminus \overline B$; in particular, it cannot be a ball in $\R^2\setminus \overline B$. Notice also that the boundary of any component must meet $\partial B$: otherwise this component would be a ball enclosing $B$ and with boundary out of $\overline B$, a configuration which is easily proved to be worse than a ball outside $B$ with same area.

{\em Step 2: Only a finite number of external arcs are allowed.}

This is due to the fact that any external arc must leave $\partial B$ with an angle between $\arccos\la$ and $\pi-\arccos\la$; curvature being constant, this means that each arc must determine on $\partial B$ a corresponding arc whose length has a fixed positive lower bound. So only a finite number of arcs on $\partial B$ can be determined, and the claim easily follows.

{\em Step 3: Internal arcs cannot meet $\partial B$ with angle strictly greater than $\pi/2$.}

{\em Step 4: Only a finite number of internal arcs is allowed: therefore $E$ has regular traces on $\partial B$.}

{\em Step 5: The isoperimetric set is connected, enclosed by a piecewise $\ci^1$ Jordan curve, and the Snell law \eqref{eqfresnel} holds.}

These proofs are exactly the same as in the corresponding Steps of Proposition \ref{ballmino}: it is sufficient to exchange the words {\em internal} and {\em external}.

{\em Step 6: An internal arc cannot meet $\partial B$ transversally, unless the contact angle is $\pi/2$.}

To prove this, one can follow again the proof of the corresponding Step 6 in Proposition \ref{ballmino}, where however we excluded balls of type (C). This cannot be done in the $\la<1$ setting: we will instead show numerically that such a configuration can be isoperimetric (see the proof of Theorem \ref{conj:ballmino1}).

{\em Step 7: Reduction to cases (B) and (C).} 

We have shown that internal arcs can be only tangential or orthogonal to $\partial B$, and that there are only finitely many of them. If tangential, they have to lie on $\partial B$; otherwise they would have curvature more than 1, thus being complete circles inside $B$. Therefore, a connected component whose boundary possesses one (or more) arc on $\partial B$ must be bounded by exactly one external arc and one arc on $\partial B$; otherwise one could rotate the external arcs until forbidden meetings are obtained. Such a component falls into case (B) and must enclose the ball $B$; moreover, the angle between the external arc and $\partial B$ must be $\arccos \la$.

We have shown that the components can be only of type (B) or (C). Were there more than one, we could use rotation arguments to obtain a contradiction. This concludes the proof.
\end{proof}

Figure \ref{graph_ballmino2} shows the graphs of perimeter as function of the enclosed area for candidates in the case $\la=1/2$: the dotted, dashed and solid lines refer, respectively, to candidates (A), (B) and (C). 
Then it is then not surprising the following Theorem.

\begin{figure}[htbp]
\begin{center} 
\includegraphics[width=15cm]{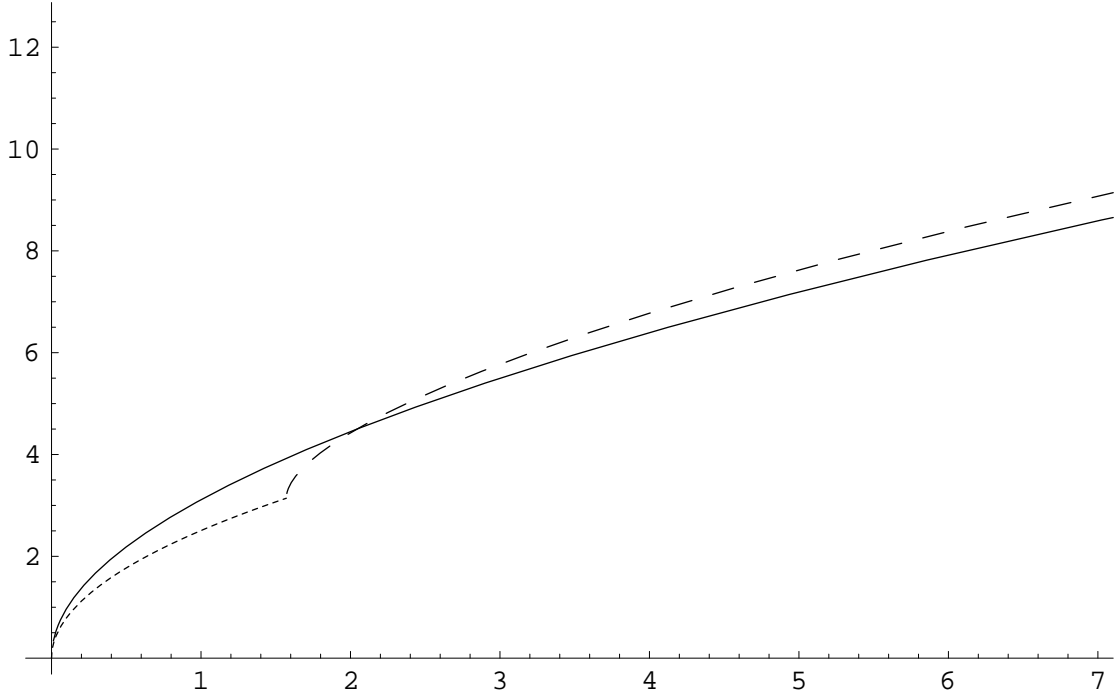}
\end{center}
\caption{Perimeter as function of area for three candidates in ball density with $\la<1$.}
\label{graph_ballmino2} 
\end{figure}

\begin{theo}\label{conj:ballmino1}
For the plane with density $\la<1$ on the unit ball and $1$ elsewhere, 
there exist some values $v_1, v_2>\la\pi$ such that the isoperimetric set of volume $v$ 
is given by
\begin{enumerate}
\item a ball of type (A) if $v\leq \la\pi$;
\item a set of type (B) if $\la\pi\leq v\leq v_1$;
\item a set of type (B) or (C) if $v_1<v< v_2$;
\item a ball of type (C) if $v\geq v_2$.
\end{enumerate}
\end{theo}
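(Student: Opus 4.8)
The plan is to follow the scheme of the proof of Theorem~\ref{ballmagg}. By the existence result above and the preceding Proposition, every isoperimetric set is of type (A), (B) or (C), so it suffices to compare the weighted perimeters of these three families at fixed weighted area. Write $P_A(v),P_B(v),P_C(v)$ for the weighted perimeter of the candidate of weighted area $v$ in each family, when it exists. A type (A) ball of weighted area $v$ has Euclidean radius $\sqrt{v/(\la\pi)}$, exists exactly for $0<v\le\la\pi$, and satisfies $P_A(v)=2\sqrt{\la\pi v}$, i.e. it has the constant isoperimetric ratio $4\la\pi$. I would parametrise type (B) by the constant (signed) curvature $h$ of its free arc, subject to the Snell condition that this arc meet $\partial B$ at angle $\arccos\la$ (just as in Step~2 of Theorem~\ref{ballmagg}, with $\be_0=\arccos\la$ in place of $\arccos(1/\la)$), and type (C) by the Euclidean radius $r$ of the disc, whose centre is forced to lie at distance $\sqrt{1+r^2}$ from the origin; one then gets explicit formulae, e.g.
$$
v_C(r)=\pi r^2-(1-\la)\,|D_r\cap B|,\qquad P_C(r)=2\pi r-(1-\la)\,\ell_r,
$$
where $D_r$ denotes the type (C) disc, $|D_r\cap B|$ increases monotonically from $0$ to $\pi/2$ as $r$ runs over $(0,+\infty)$, and $\ell_r=2r\arcsin\big((1+r^2)^{-1/2}\big)$ is the length of the arc of $\partial D_r$ inside $B$ (so $\ell_r\to0$ as $r\to0$ and $\ell_r\to2$ as $r\to+\infty$).

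I would first dispatch part (1), i.e. the range $v\le\la\pi$. A type (A) ball of weighted area $v$ exists there; a proper type (B) set always contains $B$ (see the preceding Proposition), hence has weighted area strictly larger than $\la\pi$ unless it degenerates to $B$, which is itself a type (A) ball; so the only thing to check is that no type (C) set competes, i.e. that every type (C) disc has isoperimetric ratio strictly greater than $4\la\pi$. With the formulae above this is a one-variable estimate: $P_C(r)^2/v_C(r)\to4\pi$ as $r\to+\infty$ and $\to2(1+\la)\pi$ as $r\to0$, both larger than $4\la\pi$ precisely because $\la<1$ (indeed $(1-\la)^2>0$ is the same as $(1+\la)^2>4\la$ and as $1+\la>2\la$), and an elementary monotonicity check in $r$ shows the ratio never reaches $4\la\pi$ in between. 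Alternatively, one may blow up at the relevant point of $\partial B$ and invoke Theorem~\ref{Serapioni} for the tangent half-space density, whose minimisers are balls on the $\la$-side (type (A)). Since no type (A) ball exists for $v>\la\pi$, nothing else is needed from that family.

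For $v>\la\pi$ only types (B) and (C) remain, and the whole content is the comparison of $P_B$ and $P_C$. As $v\downarrow\la\pi$ the degenerate type (B) set is $B$, so $P_B(\la\pi)=2\la\pi$; a Taylor expansion of $v_C(r),P_C(r)$ about the type (C) disc of weighted area $\la\pi$ --- literally the computation in Step~2 of Theorem~\ref{ballmagg} with $\arccos\la$ replacing $\arccos(1/\la)$ --- shows $P_C>2\la\pi$ there, hence by continuity type (B) strictly beats type (C) on an interval $(\la\pi,v_1]$, which is part (2). For large $v$ I would show $P_C(v)<P_B(v)$ by comparing the explicit profiles asymptotically: both isoperimetric ratios tend to $4\pi$, but after equating the areas $v\simeq\pi r^2\simeq\pi R^2$ the first-order corrections favour type (C), because the cheap density $\la$ is used along a longer stretch of the boundary; equivalently one runs the blow-down argument of Theorem~\ref{teostrip}, rescaling isoperimetric sets $E_j$ of area $v_j\to+\infty$ to unit weighted area (the rescaled ball densities converging to the constant density~$1$), and reads off the second-order term. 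This yields $v_2$ with type (C) winning for $v\ge v_2$, part (4). On $(v_1,v_2)$ both families are available, and there I would only assert the isoperimetric set is of type (B) or (C) --- part (3) --- using the slope/curvature argument of Step~4 of Theorem~\ref{ballmagg} (the slope of each profile equals the generalised curvature \eqref{first_variation} of the free boundary, which is monotone along each family) to conclude $v_1\le v_2$ and to forbid any further change of the optimal family outside $[v_1,v_2]$.

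The main obstacle is precisely this sharp, \emph{monotone} comparison of $P_B$ and $P_C$ throughout the range $v>\la\pi$: the competition is decidable at its two extremes --- type (B) wins just above $\la\pi$ by a local expansion, type (C) wins for large area by the asymptotic (or blow-down) argument --- but turning this into a single crossing, rather than a guaranteed regime change, seems to need more than the slope estimate provides, which is exactly why the statement hedges with ``type (B) or (C)'' on $(v_1,v_2)$. A secondary, technical point is the global lower bound $P_C^2/v_C>4\la\pi$ used in part (1): conceptually it is the blow-up reduction to the half-space density of Theorem~\ref{Serapioni}, but making the compactness rigorous for \emph{all} areas up to $\la\pi$ (not merely $v\to0$) is delicate, so in practice the direct one-variable estimate with the explicit formulae is likely the cleaner route.
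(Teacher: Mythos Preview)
Your overall architecture matches the paper's --- reduce to candidates (A), (B), (C), show (A) beats (C) throughout $v\le\la\pi$, show (B) beats (C) just above $\la\pi$, and show (C) beats (B) for large $v$ --- but two of your steps are not actually carried out.

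The main gap is your part~(1). You assert that ``an elementary monotonicity check in $r$ shows the ratio $P_C^2/v_C$ never reaches $4\la\pi$'', but you do not perform this check, and it is not obvious: the ratio runs from $2(1+\la)\pi$ at $r\to 0$ to $4\pi$ at $r\to\infty$, and one must rule out a dip below $4\la\pi$ in between. Your blow-up alternative via Theorem~\ref{Serapioni} handles only $v\to 0$, as you yourself concede. The paper does \emph{not} prove monotonicity of the ratio; instead it parametrises type (C) by $\hat\be\in[0,\pi/2]$ with $\hat r=1/\tan\hat\be$, writes out $P_C$ and $A_C$ explicitly, and reduces the inequality $P_C^2>4\la\pi A_C$ algebraically to showing that $\psi(\theta):=2\theta\tan^2\theta+\la\theta-\la\tan\theta>0$ on $(0,\pi/2)$, which follows from $\psi(0)=0$ and $\psi'(\theta)=(2-\la)\tan^2\theta+4\theta\tan\theta(1+\tan^2\theta)>0$. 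This is the real content of the proof, and your proposal does not supply it.

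Your part~(2) is also confused: the ``Step~2 of Theorem~\ref{ballmagg}'' computation you invoke is a Taylor expansion of the type-(b) set as its area tends to \emph{zero}, not an expansion of a type-(C) disc about area $\la\pi$; the angles $\arccos(1/\la)$ and $\arccos\la$ play different geometric roles in the two settings, so the substitution you propose is not ``literally'' the same computation. In fact no expansion is needed here at all: once part~(1) gives the strict inequality $P_C^2>4\la\pi A_C$, evaluating at $A_C=\la\pi$ gives $P_C>2\la\pi$ with room to spare, while the degenerate type~(B) set (namely $B$ itself) has perimeter exactly $2\la\pi$; continuity then yields part~(2). For part~(4) the paper does exactly the first-order asymptotic comparison you sketch, making it precise by showing $f(\la):=\arccos\la-\sqrt{1-\la^2}+1-\la>0$ on $(0,1)$ via $f(1)=0$ and $f'<0$. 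Finally, your slope/curvature argument ``forbidding further changes outside $[v_1,v_2]$'' is not in the paper and is not required: the theorem only asserts the \emph{existence} of some $v_1,v_2>\la\pi$ with the listed properties, and once (2) and (4) are proved, part~(3) is automatic since (B) and (C) are the only candidates there.
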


\noindent{\bf Remark.}
We believe that $v_1=v_2$ in the previous Theorem~\ref{conj:ballmino1}; 
that is, once sets of type (B) fail to be isoperimetric, type (C) sets are the solutions 
for larger volumes.  
We have observed this fact numerically, although we are not able to give a rigorous proof of it. 
\newline

\begin{proof}
Since it will be useful in the sequel, we compute explicitly the perimeter and area of candidates (B) and (C). For type-(B) sets, let $2\be$ be the angle determined by the external arc; by $\al=\be+\arccos \la$ we denote half of the angle at the center of $\partial B$ formed by the arc on $\partial B$ (see Figure~\ref{fig:alphabeta}). It is not difficult to show that $0\leq\be\leq\pi-\arccos\la$ and that, after setting $r:=\sin\al/\sin\be$ to be the radius of the external arc, the perimeter is
$$
P_B=2\big( (\pi-\be)r+\la\al\big)
$$ 
while the enclosed area is
$$
A_B=r^2(\pi-\be+\sin\be\cos\be)+(\al-\sin\al\cos\al)-(1-\la)\pi\,.
$$

\begin{figure}[ht]
\centerline{\includegraphics[width=0.25\textwidth]{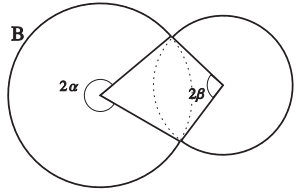}}
\caption{Type-(B) set}
\label{fig:alphabeta}
\end{figure}

Regarding balls of type (C), if $2\hat\be$ is the angle at the center determined by the internal arc, then the radius of the ball will be $\hat r:=1/\tan\hat\be$. Therefore, the perimeter will be given by
$$
P_C=2\frac{\pi-(1-\la)\hat\be}{\tan\hat\be}
$$
and the area enclosed by
$$
A_C:=\frac{\pi-(1-\la)(\hat\be-\sin\hat\be\cos\hat\be)}{\tan^2\hat\be}-(1-\la)\left(\tfrac\pi2-\hat\be-\sin\hat\be\cos\hat\be \right)\,.
$$
Since type-(B) sets enclose the ball $B$, for area bounds less than $\la\pi$ we have only to compare candidates (A) and (C). To prove the first part of the statement it will be sufficient to show that the ratio $P_C^2/A_C$ is strictly greater than $4\la\pi$, which is the same ratio computed for (A)-balls. Let us rewrite $A_C$ as
\begin{eqnarray*}
A_C &=& \frac{\pi-(1-\la)(\hat\be-\sin\hat\be\cos\hat\be)}{\tan^2\hat\be}-(1-\la)\left(\tfrac\pi2-\hat\be-\sin\hat\be\cos\hat\be \right)\\
&=& \tfrac1{\tan^2\hat\be}\left( \pi-(1-\la)\hat\be-(1-\la)\tan\hat\be(-\cos^2\hat\be+(\tfrac{\pi}{2}-\hat\be)\tan\hat\be-\sin^2\hat\be)\right)\\
&=& \tfrac1{\tan^2\hat\be}\left( \pi-(1-\la)\hat\be-(1-\la)\tan\hat\be((\tfrac{\pi}{2}-\hat\be)\tan\hat\be-1)\right)\,.
\end{eqnarray*}
The desired inequality $P_C^2>4\la\pi A_C$ is equivalent to
$$
\left( \pi-(1-\la)\hat\be\right)^2\:>\:\la\pi \left( \pi-(1-\la)\hat\be-(1-\la)\tan\hat\be\big((\tfrac{\pi}{2}-\hat\be)\tan\hat\be-1\big)\right)
$$
i.e.
$$
(1-\la)\pi^2\:>\:(1-\la)\left[ 2\pi\hat\be-(1-\la)\hat\be^2-\la\pi\hat\be-\la\pi\tan\hat\be\big((\tfrac{\pi}{2}-\hat\be)\tan\hat\be-1\big)\right]
$$
and in turn to
$$
-2\pi\big(\tfrac{\pi}{2}-\hat\be\big)-(1-\la)\hat\be^2-\la\pi\hat\be-\la\pi\big(\tfrac{\pi}{2}-\hat\be\big)\tan^2\hat\be+\la\pi\tan\hat\be<0\,.
$$
Remembering that $\hat\be\in[0,\pi/2]$, setting $\theta:=\tfrac{\pi}{2}-\hat\be$ and discarding the negative term $-(1-\la)\hat\be^2-\la\pi\hat\be$, it will be sufficient to show that
$$
2\pi\theta+\frac{\la\pi\theta}{\tan^2\theta}-\frac{\la\pi}{\tan\theta}=\frac{\pi}{\tan^2\theta}\left[2\theta\tan^2\theta+\la\theta-\la\tan\theta \right]>0\,.
$$
Recalling that $\theta\in[0,\pi/2]$ it will suffice to prove that $\p(\theta):=2\theta\tan^2\theta+\la\theta-\la\tan\theta$ is positive in $[0,\pi/2]$. Indeed it holds $\p(0)=0$ and
$$
\p'(\theta)=2\tan^2\theta+4\theta\tan\theta(1+\tan^2\theta)+\la-\la-\la\tan^2\theta>0
$$
which allows to conclude because $\la<1$.

For area bounds greater than $\la\pi$ we have only to compare candidates (B) and (C). 
From above we have that the type-(C) ball for area $\la\pi$ 
has perimeter greater than $2\la\pi+\de$, with $\de>0$; 
this will happen for any (C)-ball with area greater than $\la\pi$. The second part of the statement follows by noticing that, when the area $A_B$ of the (B)-candidate decrease to $\la\pi$ (equivalently when $r\downarrow 0$, i.e. $\al\uparrow\pi$, i.e. $\be\uparrow\pi-\arccos\la$), its perimeter $P_B$ decreases to $2\la\pi$.

When considering a type (B)-candidate associated with an external ball
of radius $r$, straightforward computations give
$$
A_B(r)=\pi r^2+O(1)\quad\text{and}\quad P_B(r)=2\pi
r+2\arccos\la-2\sqrt{1-\la^2}+O(1/r^2)
$$
for large $r$. Analogously, for a type-(C) ball of big radius $r_C$ we have
$$
A_C(r_C)=\pi r_C^2+O(1)\quad\text{and}\quad P_C(r_C)=2\pi
r_C-2(1-\la)+O(1/r_C^2)\,.
$$
By imposing the equality $A_C(r_C)=A_B(r)$ of enclosed areas, in order
to compute $r_C$ as function of $r$, one obtains $r_C=r+O(1/r)$. To
show that $P_C(r_C(r))<P_B(r)$ for large areas, we just need to prove that
$$
-2(1-\la)+O(1/r) < 2\arccos\la-2\sqrt{1-\la^2}+O(1/r^2)
$$
for large $r$. Therefore it will be sufficient to show that
$f(\la):=\arccos\la-\sqrt{1-\la^2}+1-\la$ is strictly positive for any
$\la\in]0,1[$: this in turn follows since $f(1)=0$ and
$$
f'(\la)=-\frac{1-\la}{\sqrt{1-\la^2}}-1=-\sqrt{\frac{1-\la}{1+\la}}-1
<0\qquad\text{for any }\la\in]0,1[\,.
$$ 
\end{proof}

\section{Modifications of Gauss density on the plane}
\label{SecContDens}


This section provides partial results on the isoperimetric problem for certain modifications of Gaussian density $Ce^{-cr^2}$ on the plane. Previously, Borell \cite{Bor86} and Rosales et al. \cite[Introduction and Section 5]{BayCanMorRos07OnT} proved by sy\-mme\-trization that for the density $e^{r^2}$ isoperimetric regions are balls about the origin.

Recently, Brock et al. \cite{BroChiMer08ACl} proved that for the modified Gauss space $x_N^ke^{-|x|^2}$, $k\geq 0$, 
on the half--space $\R^N_+$, isoperimetric sets are bounded by vertical hyperplanes. 
The same argument also shows that the same holds if we consider the density $|x_N^k|e^{-|x|^2}$
on the whole space $\R^N$, simply by considering the reflection with respect to the 
hyperplane $x_N=0$. In fact, if $E$ is an isoperimetric set, then $E\cap \R^N_+$ is isoperimetric
with respect to its volume bound, so its boundary in $\R^N_+$ has to be a vertical hyperplane. 

It is worth noticing that in the planar case $N=2$ and with $k=1$, vertical and horizontal lines
are the only straight lines with constant generalised curvature; moreover, the only circles with constant
generalised curvature are the one centered at the origin.

Motivated by these results, we will now consider the density in the plane defined by
\begin{equation}
\label{GaussDens2}
f(x,y)=\exp(-x^2-y^4),\quad (x,y)\in\R^2.
\end{equation}

Since the total mass is finite we have existence of isoperimetric regions for any prescribed volume.
We state the following 

\begin{conjecture}
\label{conj:1}
For density (\ref{GaussDens2}) on the plane, every isoperimetric region is bounded by a horizontal line (for prescribed volumes close to half the total volume) or by a vertical line.
\end{conjecture}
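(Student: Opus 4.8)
The plan is to reduce the planar problem, by symmetrisation in the Gaussian variable, to a one–dimensional variational problem, to analyse the latter, and finally to decide between the two line families using Proposition~\ref{propconj1}. Since $|\R^2|_f<+\infty$ an isoperimetric region $E$ of area $v$ exists, and since $f=e^{\psi}$ with $\psi(x,y)=-x^2-y^4$ smooth, $\partial E$ is a smooth curve of constant generalised curvature: by \eqref{first_variation},
\[
H_\Sigma+2x\,\nu_1+4y^3\,\nu_2\equiv H_0\qquad\text{on }\partial E,
\]
where $\nu_\Sigma=(\nu_1,\nu_2)$ is the inward normal and $H_\Sigma$ the Euclidean curvature. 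Both candidate families are stationary: $\{x=c\}$ has generalised curvature $\mp 2c$ and $\{y=c\}$ has $\mp 4c^3$, each constant along the line, and as $c$ varies each family realises every volume in $(0,|\R^2|_f)$; their weighted perimeters are $e^{-c^2}\int_{\R}e^{-t^4}\,dt=\tfrac12\Gamma(\tfrac14)\,e^{-c^2}$ and $e^{-c^4}\int_{\R}e^{-t^2}\,dt=\sqrt\pi\,e^{-c^4}$, respectively.

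\emph{Step 1: reduction to a graph.} Write $f(x,y)=e^{-x^2}e^{-y^4}$, a product density whose restriction to each horizontal fibre $\{Y=y\}$ is a Gaussian $e^{-x^2}dx$ up to the factor $e^{-y^4}$, for which half–lines $\{x<a\}$ are the one–dimensional isoperimetric regions. Perform the Ehrhard--Gaussian symmetrisation of $E$ with respect to the first coordinate, i.e.\ replace each slice $E\cap\{Y=y\}$ by the half–line $\{x<a(y)\}$ of the same Gaussian measure. The core inequality in Ehrhard's lemma (comparing $\sqrt{1+a'(y)^2}\,e^{-a(y)^2}$ with the analogous sum over the boundary points of the slice) is pointwise in $y$, hence unaffected by multiplication by the extra weight $e^{-y^4}$; thus the symmetrisation preserves $|E|_f$ and does not increase $P_f(E)$, with equality only if a.e.\ slice was already a half–line. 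We may therefore assume $E=\{(x,y):x<a(y)\}$ for some $a\in BV(\R;\R\cup\{\pm\infty\})$, and using the symmetry $y\mapsto-y$ and a reflection argument in the spirit of Proposition~\ref{prop:simetrico}, $a$ may be taken even and unimodal. With $\Phi(t):=\int_{-\infty}^{t}e^{-s^2}\,ds$ the problem becomes: minimise $\int_{\R}\sqrt{1+a'(y)^2}\;e^{-a(y)^2-y^4}\,dy$ (plus jump contributions) subject to $\int_{\R}\Phi(a(y))\,e^{-y^4}\,dy=v$. The constant profile $a\equiv c$ is the vertical half–plane $\{x<c\}$, while the degenerate profile equal to $+\infty$ on $(-\infty,b)$ and to $-\infty$ on $(b,+\infty)$ — a single jump, with jump contribution $\sqrt\pi\,e^{-b^4}$ — is the horizontal half–plane $\{y<b\}$.

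\emph{Step 2: eliminating the intermediate profiles.} One must show that a stationary, finite, non–constant profile $a$ cannot be a minimiser, so that the infimum of the reduced functional is attained only by a constant profile or by the degenerate horizontal one; equivalently, in the plane one must rule out every closed curve and every ``oval–with–caps'' configuration of constant generalised curvature $H_\Sigma+2x\nu_1+4y^3\nu_2\equiv H_0$. \textbf{This is the step not carried out here, and the reason the statement is only a conjecture.} One can check directly that for small volumes a half–plane $\{x>c\}$ with $c$ large beats the round ball of equal area (comparing $2\sqrt{\pi v}$ with the half–plane perimeter $\sim 2v(\log\tfrac1v)^{1/2}$), and a second–variation or convexity analysis of the reduced functional would suffice to conclude; but a full classification of constant–generalised–curvature curves for this non–radial, non–Gaussian density appears delicate.

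\emph{Step 3: vertical versus horizontal.} Granting Step~2, the minimiser is a vertical or a horizontal line, and Proposition~\ref{propconj1} records the comparison. At $v=|\R^2|_f/2$ the horizontal line $\{y=0\}$ has perimeter $\sqrt\pi$, strictly smaller than the perimeter $\tfrac12\Gamma(\tfrac14)$ of $\{x=0\}$, so by continuity of the two perimeter functions horizontal lines are optimal on a neighbourhood of $|\R^2|_f/2$; as $v\to0^+$ (and symmetrically $v\to|\R^2|_f^-$) the optimal levels satisfy $c_{\mathrm{vert}}\sim(\log\tfrac1v)^{1/2}$ and $c_{\mathrm{hor}}\sim(\log\tfrac1v)^{1/4}$, so the vertical perimeter $\sim 2v(\log\tfrac1v)^{1/2}$ is asymptotically smaller than the horizontal perimeter $\sim 4v(\log\tfrac1v)^{3/4}$, and vertical lines win near the extremes. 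This is exactly the dichotomy asserted in Conjecture~\ref{conj:1}.
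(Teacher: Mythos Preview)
The statement is a \emph{conjecture} in the paper, not a theorem, and the paper does not prove it. What the paper actually provides is: a short lemma observing that among straight lines only vertical and horizontal ones have constant generalised curvature (and that circles do not), Proposition~\ref{propconj1} comparing the perimeters of vertical and horizontal lines numerically, and a remark that one ``could apply two adapted Steiner symmetrisations'' in the spirit of \cite{BroChiMer08ACl} without carrying this out. The paper explicitly lists the problem as open in Question~\ref{quex2y4}. You are therefore right to flag Step~2 as the missing step; neither you nor the paper closes it.

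Your Step~1 goes genuinely further than the paper: you make precise the Ehrhard/Gaussian symmetrisation in the $x$--variable that the paper only alludes to, and reduce the problem to a one--dimensional profile $a(y)$. That is a real gain and is the natural route, since the $x$--marginal is exactly Gaussian. Two small caveats: the reflection $y\mapsto -y$ yields evenness of $a$, but the claim that $a$ may also be taken \emph{unimodal} would require a further symmetrisation in $y$ with respect to the weight $e^{-y^4}$, which is not Gaussian and needs its own justification; and in the equality case of Ehrhard one gets that a.e.\ slice is a half--line, but one should also argue (e.g.\ via the smoothness of $\partial E$) that the orientation of these half--lines is coherent, so that $E$ is globally a subgraph and not a mixture of $\{x<a(y)\}$ and $\{x>a(y)\}$ on different $y$--levels.

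Your Step~3 is essentially Proposition~\ref{propconj1} rephrased asymptotically rather than numerically; both arguments reach the same conclusion, and your asymptotic computation $P_{\rm vert}\sim 2v(\log\tfrac1v)^{1/2}$ versus $P_{\rm hor}\sim 4v(\log\tfrac1v)^{3/4}$ is a cleaner way to see why vertical lines win for extreme volumes. In short: your write--up is an honest and slightly sharper account of the \emph{evidence} for the conjecture, with the same gap the paper leaves open.
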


We trivially have the following lemma. 

\begin{lemma}
Vertical and horizontal lines are the unique straight lines with
cons\-tant generalised mean curvature.
Furthemore, circles do not have constant generalised mean curvature.
\end{lemma}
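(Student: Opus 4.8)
The plan is to read the generalised mean curvature straight off formula~\eqref{first_variation}: in the plane it reads $H_\psi = H_\Sigma - \langle \nabla\psi,\nu_\Sigma\rangle$, and since $f=e^\psi$ we have $\psi(x,y)=-x^2-y^4$, hence $\nabla\psi(x,y) = (-2x,-4y^3)$. Both assertions then reduce to an elementary one-variable computation.

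For the statement on lines, I would fix a line with constant unit normal $\nu=(a,b)$, $a^2+b^2=1$, and parametrise it by $\gamma(t)=(x_0,y_0)+t(b,-a)$. Since $H_\Sigma\equiv0$, along $\gamma$ one gets
$$
H_\psi(\gamma(t)) = -\langle\nabla\psi,\nu\rangle = 2a(x_0+bt)+4b(y_0-at)^3,
$$
a cubic polynomial in $t$ with leading coefficient $-4a^3b$. Constancy of $H_\psi$ forces $a^3b=0$, i.e. $a=0$ (a horizontal line) or $b=0$ (a vertical line); conversely, on a horizontal line $H_\psi\equiv 4by_0^3$ and on a vertical line $H_\psi\equiv 2ax_0$ are indeed constant. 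This settles the first part.

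For circles I would take the circle of radius $R>0$ and centre $(x_c,y_c)$, parametrised by $\gamma(\theta)=(x_c+R\cos\theta,\,y_c+R\sin\theta)$ with inward normal $\nu(\theta)=-(\cos\theta,\sin\theta)$, so that $H_\Sigma\equiv 1/R$ and
$$
H_\psi(\gamma(\theta)) = \tfrac1R - 2(x_c+R\cos\theta)\cos\theta - 4(y_c+R\sin\theta)^3\sin\theta.
$$
Expanding and using $\cos^2\theta=1-\sin^2\theta$, the right-hand side becomes a polynomial of degree $4$ in $s=\sin\theta$ together with a term proportional to $\cos\theta$, and the coefficient of $s^4$ is $-4R^3$. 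Passing to Fourier modes in $\theta$, only the $s^4$-term contributes a $\cos4\theta$ harmonic (all the remaining terms have frequency at most $3$), so the $\cos4\theta$-coefficient of $\theta\mapsto H_\psi(\gamma(\theta))$ equals $-R^3/2\neq0$; hence $H_\psi$ cannot be constant along any circle. (Equivalently one may just evaluate $H_\psi\circ\gamma$ at $\theta=0,\pm\pi/2,\pi/4$ and observe that the four values cannot coincide.)

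There is no genuine obstacle here; the only point requiring a little care is the bookkeeping in the circle case — in particular checking that no lower-frequency term can cancel the $\cos4\theta$ contribution — but this is immediate once the expansion is written out.
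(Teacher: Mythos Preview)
Your proof is correct. The paper itself states the lemma as ``trivial'' and gives no proof, so your computation is exactly the natural filling-in of the omitted details: reading off $H_\psi=H_\Sigma-\langle\nabla\psi,\nu_\Sigma\rangle$ with $\nabla\psi=(-2x,-4y^3)$, reducing the line case to the vanishing of the cubic coefficient $-4a^3b$, and in the circle case isolating the nonzero $\cos4\theta$ Fourier mode coming from $-4R^3\sin^4\theta$.
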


A first consequence of the previous lemma is that no arc of a circle will be
contained in the boundary of an isoperimetric region.
We now proceed to compare the perimeter of vertical and horizontal
lines enclosing the same volume.
We shall see that, for some volumes,
vertical lines are better, while horizontal lines beat vertical ones for other volumes.

\begin{prop}\label{propconj1}
For volume fractions near $1/2$, horizontal lines have less perimeter than vertical lines, 
and for small or large volumes, vertical lines have less perimeter than horizontal ones.  
\end{prop}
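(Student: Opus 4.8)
The plan is to write down the weighted perimeters of vertical and horizontal lines as explicit functions of the enclosed mass and then compare them in the two regimes. Put $M:=\int_{\R^2}e^{-x^2-y^4}\,dx\,dy=\sqrt\pi\,c_4$, where $c_4:=\int_\R e^{-y^4}\,dy=\tfrac12\Gamma(\tfrac14)$. A vertical line $\{x=a\}$ bounds $\{x<a\}$, which has mass $c_4\int_{-\infty}^{a}e^{-x^2}dx$ and weighted perimeter $p_V(a)=c_4\,e^{-a^2}$; a horizontal line $\{y=b\}$ bounds $\{y<b\}$, of mass $\sqrt\pi\int_{-\infty}^{b}e^{-y^4}dy$ and weighted perimeter $p_H(b)=\sqrt\pi\,e^{-b^4}$. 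For a prescribed mass $v\in(0,M)$ let $a(v),b(v)$ be the corresponding parameters and set $P_V(v):=p_V(a(v))$, $P_H(v):=p_H(b(v))$. Since $f$ is even in each variable, $P_V(M-v)=P_V(v)$ and $P_H(M-v)=P_H(v)$, so it is enough to treat $v$ near $M/2$ and $v$ near $0$.

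For the balanced case the mass constraints force $a(M/2)=b(M/2)=0$, hence $P_V(M/2)=c_4=\int_\R e^{-y^4}dy$ and $P_H(M/2)=\sqrt\pi=\int_\R e^{-x^2}dx$. The whole point here is the scalar inequality $\int_\R e^{-y^4}dy>\int_\R e^{-x^2}dx$, i.e. $\Gamma(\tfrac14)>2\sqrt\pi$, which I would verify either numerically ($\tfrac12\Gamma(\tfrac14)\approx1.813>1.772\approx\sqrt\pi$) or by a short elementary estimate of $\Gamma(\tfrac14)=\int_0^\infty t^{-3/4}e^{-t}dt$ using an alternating-series lower bound for $e^{-t}$ on $[0,2]$ together with $\int_1^\infty t^{-3/4}e^{-t}\ge\int_1^\infty t^{-1}e^{-t}$. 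It follows that $P_H(M/2)<P_V(M/2)$, and since $v\mapsto P_V(v)-P_H(v)$ is continuous, the horizontal line has strictly smaller perimeter on an open neighbourhood of $M/2$.

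For the extreme case $v\to0^{+}$ (the case $v\to M^{-}$ then follows from the symmetry above), one has $a(v)\to-\infty$ and $b(v)\to-\infty$; set $\alpha:=-a(v)>0$, $\beta:=-b(v)>0$. Integration by parts gives the two-sided tail estimates, valid for $\alpha,\beta\ge1$,
\begin{equation*}
\frac{e^{-\alpha^2}}{2\alpha+1}\le\int_\alpha^\infty e^{-x^2}dx\le\frac{e^{-\alpha^2}}{2\alpha},\qquad
\frac{e^{-\beta^4}}{7\beta^3}\le\int_\beta^\infty e^{-y^4}dy\le\frac{e^{-\beta^4}}{4\beta^3}.
\end{equation*}
Inserting these in the mass identity $c_4\int_\alpha^\infty e^{-x^2}dx=\sqrt\pi\int_\beta^\infty e^{-y^4}dy$ yields, on one side, $\alpha^2\le\beta^4+3\ln\beta+C$ for an absolute constant $C$ (hence $\alpha\le\sqrt2\,\beta^2$ once $\beta$ is large), and on the other side $P_V(v)=c_4e^{-\alpha^2}\le\frac{2\alpha+1}{4\beta^3}\,\sqrt\pi\,e^{-\beta^4}=\frac{2\alpha+1}{4\beta^3}\,P_H(v)$. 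Since $\alpha\le\sqrt2\,\beta^2$ forces $2\alpha+1<4\beta^3$ for $\beta$ large, this gives $P_V(v)<P_H(v)$ for all sufficiently small $v$, and by symmetry also for all $v$ sufficiently close to $M$.

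I expect the last paragraph to be the main obstacle — not because it is deep, but because one must use the tail bounds to pin down the correct scaling $\alpha\sim\beta^2$ between the ``depths'' of the two lines enclosing equal small mass and then check that the prefactor $(2\alpha+1)/(4\beta^3)$ drops below $1$; keeping the constants honest is the only delicate point. The balanced case is comparatively trivial once the single inequality $\Gamma(1/4)>2\sqrt\pi$ is granted.
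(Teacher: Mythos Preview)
Your argument is correct. The balanced case reduces, in both your approach and the paper's, to the single numerical inequality $\Gamma(\tfrac14)>2\sqrt\pi$ (the paper writes this as $\log\big(\sqrt\pi/(2\Gamma(\tfrac54))\big)<0$, which is the same thing since $2\Gamma(\tfrac54)=\tfrac12\Gamma(\tfrac14)$). Continuity then gives the result near $M/2$ in either treatment.

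The genuine difference is in the small/large volume regime. The paper reduces the comparison to the sign of $-x(y)^2+y^4-\log\big(\sqrt\pi/(2\Gamma(\tfrac54))\big)$, where $x(y)$ is defined implicitly by the equal--volume condition, and then simply reports the outcome of numerical evaluation: horizontal lines win for $|y|\lesssim0.15$ and vertical lines win for $|y|\gtrsim0.16$. You instead give an analytic asymptotic argument based on the tail bounds $e^{-\alpha^2}/(2\alpha+1)\le\int_\alpha^\infty e^{-x^2}dx\le e^{-\alpha^2}/(2\alpha)$ and $e^{-\beta^4}/(7\beta^3)\le\int_\beta^\infty e^{-y^4}dy\le e^{-\beta^4}/(4\beta^3)$, extract the scaling $\alpha\le\sqrt2\,\beta^2$ for large $\beta$, and conclude $P_V<P_H$ from the prefactor $(2\alpha+1)/(4\beta^3)\to0$. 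This is more work but yields a self--contained proof for the tails that does not appeal to numerics; the paper's version is shorter but is really a numerical observation rather than a proof. Your tail bounds are easily checked by one integration by parts (and for the $e^{-y^4}$ lower bound a second estimate $\int_\beta^\infty y^{-4}e^{-y^4}dy\le\beta^{-4}\int_\beta^\infty e^{-y^4}dy$), so the constants are honest. The only residual soft spot is the same in both treatments: the inequality $\Gamma(\tfrac14)>2\sqrt\pi$ is ultimately taken from the numerical values.
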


\begin{proof}
For $y\in[0,+\infty)$,
let $R_h(y)$ be the horizontal line $\{(x,y):x\in\R\}$.
This line bounds a region with volume equal to
\[
v_h(y)=\frac{\sqrt{\pi}}{4}\,\Gamma\big(\tfrac{1}{4},y^4\big),
\]
where $\Gamma$ represents the Gamma function of two arguments. 
On the other hand, for the perimeter of the region bounded by $R_h(y)$ we have
\[
P_h(y)=\sqrt{\pi}\,e^{-y^4}.
\]

For $x\in[0,+\infty)$,
let $R_v(x)$ be the vertical line $\{(x,y):y\in\R\}$. Then,
the region bounded by this line has a volume equal to
\[
v_v(x)=\sqrt{\pi}\ \Gamma\big(\tfrac{5}{4}\big)(1-{\rm Erf}(x)), 
\]
where $\rm Erf$ is the error function. 
Moreover, the perimeter of the region bounded by $R_v(x)$ is given by
\[
P_v(x)=2\,\Gamma\big(\tfrac{5}{4}\big)\,e^{-x^2}.
\]

Given $y$, $x\in[0,+\infty)$, it is easy to check that 
$v_h(y)=v_v(x)$ if and only if
\begin{equation}
\label{eq:samevolume}
{\rm Erf}(x)=1-\frac{\Gamma(\frac{1}{4},y^4)}{4\,\Gamma(\frac{5}{4})}.
\end{equation}

Consider now $y$, $x\in[0,+\infty)$
satisfying condition~\eqref{eq:samevolume}.
Then
\begin{align}
\label{eq:condition}
P_v(x)<P_h(y)&\Leftrightarrow
\frac{e^{-x^2}}{e^{-y^4}}<\frac{\sqrt{\pi}}{2\Gamma(\frac{5}{4})}\notag
\\
&\Leftrightarrow
\log\bigg(\frac{e^{-x^2}}{e^{-y^4}}\bigg)<\log\bigg(\frac{\sqrt{\pi}}{2\Gamma(\frac{5}{4})}\bigg)\notag
\\
&\Leftrightarrow
-x^2 + y^4 < \log\bigg(\frac{\sqrt{\pi}}{2\Gamma(\frac{5}{4})}\bigg)\sim -0.02251.
\end{align}
Hence, if this last condition is satisfied,
the corresponding vertical line is isoperime\-trically better than the
horizontal line.

Observe that the left hand side in condition \eqref{eq:condition} can be expressed as a function of
$y$, by using \eqref{eq:samevolume}.
Numerical computations show that for
values of $y$ close to zero,
e.g. when
\[
y\in[-0.15,0.15],
\]
the left hand side in~\eqref{eq:condition} is greater than $-0.02251$,
and so, horizontal lines are better.
For $$|y|>0.16,$$ such a term is less than $-0.02251$
and so vertical lines are better. This completes the proof. 
\end{proof}

\begin{rmk}
In this situation, domains bounded by two (symmetric) lines can be easily discarded,
since the perimeters of horizontal and vertical lines are strictly decreasing functions.
\end{rmk}

\begin{rmk}
Proposition \ref{propconj1} indicates that vertical or horizontal lines may appear as
minimisers.
As in \cite{BroChiMer08ACl}, 
given a set $\Omega\subset\R^2$, we could apply two adapted Steiner symmetrisations
(one in the vertical direction and another one in the horizontal direction), 
to obtain another set $\Omega^*\subset\R^2$ which would have 
the same volume and no more perimeter than $\Omega$.
\end{rmk}

\begin{rmk}
In this setting, it can be checked that horizontal lines far away from the origin 
(i.e. for small or large volumes) are unstable by using the second variation formula 
(see \cite[Proposition~3.6]{BayCanMorRos07OnT}) 
with a suitable mean zero function. 
Unfortunately, the study of the stability of the rest of the lines seems a harder question. 
\end{rmk}

\section{Open questions}\label{SecOpenQue}

The study of manifolds with density, and of the isoperimetric problem in parti\-cu\-lar, is relatively recent, and many 
related works have appeared during the last years \cite{Mor05Man,CJQW,BayCanMorRos07OnT}.
In this section, we will list several open problems in this setting: some of them (Questions 1--4) 
are closely related to the present paper, while the other ones arose during Morgan's lectures on
``Manifolds with density'' at the GMTLAP meeting in Modena, February 2007.

\begin{que}[The strip case]\label{questrip}
Conjecture \ref{conj:strip} remains open: as we said we think that a four--arc candidate (iv) cannot be isoperimetric, but we are presently not able to prove this.
%
\end{que}

\begin{que}[The ball case]\label{queball}
The case $\lambda<1$ in Section~\ref{ss:bmenor1} is not completely solved (see Theorem~\ref{conj:ballmino1}): 
it is left to prove that, once sets of type (C) are isoperimetric, type (B) sets cannnot be solutions for any other 
larger volume.  

It would be also interesting to investigate the ball density problem in
$\R^N$, $N\geq 3$. For $\la>1$ and volume
$v\geq \la\omega_N$ balls containing $B$ are
isoperimetric: the proof is the same as the planar case.

One could further imagine densities which are constantly 1 on $\R^2$ except for some balls:
this situation seems particularly interesting, since it may well provide non--connected
isoperimetric sets. 
Consider for example the piecewise constant density 
$\la$ on $B\cup B'$ and 1 outside, where $\la\gg 1$ and $B$, $B'$ are two sufficiently distant balls of radius one.
It is not difficult to show that isoperimetric sets, at least for some
area bounds, are not connected. In fact, take $2\pi\la$ as prescribed area:
were the isoperimetric set connected, by Theorem \ref{ballmagg} it
would consist of a ball containing $B$ or $B'$, but not both of them 
as they are chosen far away (recall the diameter estimate \eqref{estdiamper}).
Therefore, its density perimeter would be at least $2\pi\sqrt{\la}$, while $B\cup B'$ 
satisfies the area bound with perimeter $4\pi$, which is smaller for large values of $\la$.
\end{que}


\begin{que}[Plane with density $\exp(-x^2-y^4)$]\label{quex2y4}
Conjecture \ref{conj:1} (iso\-peri\-me\-tric boundaries are given by horizontal or vertical lines) is 
still open, since we have not determined other possible competitors.
\end{que}

\begin{que}\label{quebobkov}
Other modifications of the Gaussian density on the plane would be worth investigating. For instance, we propose the 
density $f(x,y):=\exp(-x^4-y^4)$ on $\R^2$. An interesting approach seems to be related to a work by S.Bobkov \cite{Bob96Ext}, 
where it is proved that half-spaces are isoperimetric in $\R^N$ with density given as a product $f(x)=\varphi(x_1)\cdots \varphi(x_N)$, 
under certain assumptions on $\varphi$. Unfortunately, these hypothesis are not quite 
satisfied in our case $\varphi(x)=e^{-x^4}$. 
\end{que}

\begin{que}[Proposed by A. Brancolini]
Consider $\R^2$ endowed with a density attaining its minimum value
in several points.
Then it is reasonable to think that for small prescribed areas
an isoperimetric region should be a nearly round disc containing one of these points. The reader may compare this with \cite[\S~5]{BayCanMorRos07OnT}, where this result is proven for a particular density with a unique minimum point.
\end{que}

\begin{que}[Proposed by A. Brancolini]
Another interesting question consists of finding the iso\-pe\-ri\-me\-tric regions in Euclidean space
endowed with a density which is a finite sum of
Gaussian measures with different barycenters and
variances.
In the particular case that all the barycenters lie in the same line $\ell$,
orthogonal lines to $\ell$ seem nice candidates for being the solutions.
\end{que}

\begin{que}
Which regions (with rectifiable boundary) are isoperimetric for some density? Probably any smooth region has this property.
Take
a density with very low values along the boundary, perhaps a low constant value with normal derivative determined by the constant generalised mean curvature condition.
\end{que}

\begin{que}[Proposed by S. Ansaloni]
Consider the sphere $S^2$ endowed with some density such as
$$
f(\theta,\phi):=(1-\sin \phi)^{-p}, 
$$
where $\phi\in[-\tfrac{\pi}{2},\tfrac{\pi}{2}]$ is the latitude. Here the conjectured isoperimetric sets are polar caps.
\end{que}

\begin{que}[Proposed by A. Pratelli] 
Fairly recent ``quantitative'' isoperimetric inequalities show that a
set with perimeter close to the optimal one must approximate a minimiser in
shape (see \cite{FusMagPra05}). Can such results be generalised to manifolds with
density?
\end{que}

\begin{que}
Suppose the density $f$ is ``conic'', i.e. constant along rays starting from the origin. 
Give necessary conditions for a cone $C$ to be the boundary of a perimeter--minimising set. The main case is given by the half-space density and the Snell refraction law is one necessary condition. Such results could be used 
more generally. Take a point of an isoperimetric boundary which is also a discontinuity point for the density, and perform a blow-up. 
In the limit the above situation is retrieved.
\end{que}

\end{document}